\renewcommand\section{\@startsection{section}{1}{\z@}%
                       {-3\p@ \@plus -4\p@ \@minus -4\p@}%
                       {3\p@ \@plus 4\p@ \@minus 4\p@}%
                      {\normalfont\normalsize\centering\scshape}}
\numberwithin{equation}{section} 	
\newcommand{\Rosen}{Ros\'en}	
\author{Lashi Bandara}
\author{Hemanth Saratchandran}
\title[Ess self-adj. of f.o. diff. ops. on low-reg. metrics]{Essential self-adjointness of powers
of first-order differential operators on non-compact manifolds with low-regularity metrics}
\date{\today}
\address{Lashi Bandara, Mathematical Sciences,
Chalmers University of Technology and University of Gothenburg, SE-412 96, Gothenburg, Sweden}
\urladdr{\href{http://www.math.chalmers.se/~lashitha}{http://www.math.chalmers.se/~lashitha}}
\email{\href{mailto:lashi.bandara@chalmers.se}{lashi.bandara@chalmers.se}}
\address{Hemanth Saratchandran, 
Beijing International Centre for Mathematical Research, Peking University, Beijing 100871, PR China}
\email{\href{mailto:hemanth.saratchandran@gmail.com}{hemanth.saratchandran@gmail.com}}
\keywords{Essential self-adjointness, elliptic operator, Dirac operator, rough metric,
incomplete manifold, negligible boundary}
\subjclass[2010]{58J60, 47F05, 46E40, 58J05}
\def\colour{\colour}
\def\colour{\color}
\newtheorem{theorem}{Theorem}[section]
\newtheorem{corollary}[theorem]{Corollary}
\newtheorem{lemma}[theorem]{Lemma}
\newtheorem{proposition}[theorem]{Proposition}
\newtheorem{definition}[theorem]{Definition}
\newtheorem{remark}[theorem]{Remark}
\newtheorem{example}[theorem]{Example}
\newcommand{\mdot}{\cdot}
\newcommand{\cbrac}[1]{\left(#1\right)}
\newcommand{\bbrac}[1]{\left[#1\right]}
\newcommand{\dbrac}[1]{\left\{#1\right\}}
\newcommand{\modulus}[1]{\left|#1\right|}
\newcommand{\set}[1]{\dbrac{#1}}
\newcommand{\dom}{ {\mathcal{D}}}
\newcommand{\ran}{ {\mathcal{R}}}
\newcommand{\nul}{ {\mathcal{N}}}
\newcommand{\comp}{\, \circ\, }
\newcommand{\e}{\mathrm{e}}
\newcommand{\R}{\mathbb{R}}
\newcommand{\C}{\mathbb{C}}
\newcommand{\Na}{\ensuremath{\mathbb{N}}}
\newcommand{\script}[1]{\mathscr{#1}}
\renewcommand{\emptyset}{\varnothing}
\newcommand{\intersect}{\cap}
\newcommand{\rest}[1]{{{\lvert_{}}_{}}_{#1}}
\newcommand{\close}[1]{\overline{#1}}		
\renewcommand{\epsilon}{\varepsilon}
\renewcommand{\phi}{\varphi}
\newcommand{\graph}{\script{G}}		
\newcommand{\tensor}{\otimes}
\newcommand{\comm}[1]{\bbrac{#1}}		
\newcommand{\norm}[1]{\| #1 \|}			
\newcommand{\spt}[1]{{\rm spt} {\text{ }}#1}	
\DeclareMathOperator{\esssup}{esssup}
\DeclareMathOperator{\tr}{tr}			
\DeclareMathOperator{\len}{\ell}			
\DeclareMathOperator{\divv}{div}		
\newcommand{\cut}{\ \llcorner\ }			
\newcommand{\Forms}[1][{}]{\mathbf{\Omega}^{#1}}		
\newcommand{\Tensors}[1][{}]{{\mathcal{T}}^{(#1)}}	
\newcommand{\Sect}{\mathbf{\Gamma}}		
\newcommand{\tanb}{{\rm T}}		
\newcommand{\cotanb}{{\rm T}^\ast}	
\newcommand{\pullb}[1]{#1^\ast}			
\DeclareFontFamily{OT1}{restrictfont}{}
\DeclareFontShape{OT1}{restrictfont}{m}{n}{<-> fmvr8x}{}
\newcommand{\adj}[1]{{#1}^\ast}			
\newcommand{\extd}{{\rm d}}			
\newcommand{\intd}{{\updelta}}
\newcommand{\inprod}[1]{\left\langle #1 \right\rangle}	
\newcommand{\conn}[1][{}]{{\nabla_{{#1}}}}		
\newcommand{\Leb}[1][{}]{\script{L}^{#1}}			
\newcommand{\Cliff}[1][{}]{\Delta^{#1}}		
\DeclareMathOperator{\cliff}{{\scaleobj{0.5}{\triangle}\, }}	
\DeclareMathOperator{\Spin}{Spin}			
\DeclareMathOperator{\Spinors}{\slashed{\Delta}}	
\newcommand{\spin}[1]{\slashed{#1}}		
\newcommand{\Prin}[1]{\mathrm{P}_{#1}}		
\newcommand{\bddlf}{\mathcal{L}} 	
\newcommand{\Lp}[2][{}]{{\rm L}^{#2}_{\rm #1}}		
\newcommand{\Ck}[2][{}]{{\rm C}^{#2}_{\rm #1}}		
\newcommand{\Sob}[2][{}]{{\rm W}^{#2}_{\rm #1}}		
\newcommand{\SobH}[2][{}]{\Sob[#1]{#2,2}}
\newcommand{\convolve}{\, \ast\, }
\newcommand{\iden}{{\mathrm{I}}}
\newcommand{\Hil}{\script{H}}			
\newcommand{\Lap}{\Delta}			
\newcommand{\sA}{\script{B}}
\newcommand{\sC}{\script{C}}
\newcommand{\cV}{\mathcal{V}}
\newcommand{\cM}{\mathcal{M}}
\newcommand{\cS}{\mathcal{S}}
\newcommand{\mg}{\mathrm{g}}
\newcommand{\mgt}{\tilde{\mg}}
\newcommand{\mh}{\mathrm{h}}
\newcommand{\Mul}{{\mathrm{M}}}
\newcommand{\Div}{\mathrm{L}}
\DeclareMathOperator{\Sym}{Sym}
\newcommand{\B}{\mathrm{B}}
\newcommand{\Dir}{{\rm D} }
\newcommand{\met}{\uprho}		
\newcommand{\rep}{\cdot}
\newcommand{\RNum}[1]{\uppercase\expandafter{\romannumeral #1\relax}}
\newcommand{\Sp}{\mathcal{S}}
\begin{document}

\maketitle
\vspace*{-2em}
\begin{abstract}

We consider first-order differential operators 
with locally bounded measurable coefficients on 
vector bundles with measurable coefficient metrics.  
Under a mild set of assumptions,
we demonstrate the equivalence between the essential 
self-adjointness of such operators to a negligible
boundary property. When the operator possesses higher
regularity coefficients, we show that 
higher powers are essentially self-adjoint
if and only if this condition is satisfied.
In the case that the low-regularity
Riemannian metric induces a complete length space, we
demonstrate essential self-adjointness of
the operator and its higher powers up to the regularity 
of its coefficients. We also present applications
to Dirac operators on Dirac bundles
 when the metric is non-smooth.
\end{abstract}
\tableofcontents
\vspace*{-2em}

\parindent0cm
\setlength{\parskip}{\baselineskip}

\section{Introduction}

The problem of determining essential self-adjointness of
smooth coefficient first-order differential operators, 
as well as their powers, 
is an important and well studied topic. 
This paper considers similar problems but in the
context of non-smooth coefficients. More precisely,
we consider  first-order  symmetric
differential operators $\Dir$, as well as their powers, on smooth vector bundles
 $\cV$, over smooth, noncompact manifolds $\cM$. 
We allow for the coefficients of the operator as
well as the metrics on the bundle and the manifold to be non-smooth. 
Our primary focus is to understand the relationship between the regularity
of the coefficients of $\Dir$ and the essential self-adjointness
of powers of $\Dir$.


One of the primary motivations for studying the essential self-adjointness of a differential operator $\Dir$
comes from the fact that it allows one to build a functional calculus (of Borel functions) for the closure of that operator. Such
a functional calculus can then, for instance, be used to 
build a heat operator, $e^{-t\Dir}$ for $t > 0$, and a Schrodinger 
operator, 
$e^{\imath t\Dir}$ for $t \in \R$. These in turn can then be used to solve the heat equation $u_t + \Dir u = 0$, and the 
Schrodinger equation $u_t + \imath \Dir u = 0$ respectively. It is in the construction of such solutions to differential 
equations that
makes essential self-adjointness an indispensable property.   


There is a plethora of historical 
literature surrounding this subject, and therefore, we confine ourselves
to presenting only the relevant references to our work.
From our point of view, it was Gaffney who in \cite{Gaffney} 
made a first significant contribution 
by establishing the essential self-adjointness of the 
Hodge Laplacian $(\extd\intd + \intd\extd)$ under
a so-called \emph{negligible boundary} condition.
 Moreover, Gaffney allows his manifold to 
be $\Ck{k}$ and incomplete.  The next relevant reference to us is the work of 
 Wolf in \cite{Wolf},
where he  studies the essential self-adjointness of general 
Dirac operators and their squares. Moreover, Cordes in \cite{Cordes}
obtains the essential self-adjointness of all powers
of the Laplace-Beltrami operator $\Lap^m$ on functions. 
Later, in \cite{Chernoff}, Chernoff
studies conditions under which essential self-adjointness of \emph{all} 
powers of first-order operators are obtained.
These last three references assume both completeness and
smoothness of their metrics and the coefficients of the operators.


In the last few decades, there has been an interest in the study of smooth manifolds admitting non-smooth
metrics. For example, Anderson and Cheeger (see \cite{AC}) were able to show
that certain smooth manifolds admitting $C^{\alpha}$ metrics can be seen to arise as limits of smooth manifolds
with smooth metrics, satisfying bounds on their Ricci curvature, injectivity radius, and volume.
In \cite{CH}, Chen and Hsu studied gradient estimates for weakly harmonic functions
on smooth complete manifolds with a Lipschitz continuous metric, and were able to extend a result of Yau
to this setting. Non-smooth metrics have also arisen in studies associated to the Ricci flow through
works of Simon \cite{Simon}, and Chen and Ding \cite{CD}. 
The work of these authors motivates the study of differential operators on such non-smooth spaces. 


The study we present here is motivated by the question of whether one can carry out functional calculus
constructions, e.g. building a  Schrödinger   operator $e^{\imath t\Dir}$ for $t > 0$, in the above mentioned
non-smooth settings. As essential self-adjointness is the key property to carrying out such functional calculi
constructions  (of Borel functions), we solely focus on this property. While the work of \cite{Cordes} and \cite{Chernoff}
are perfectly suited for the smooth situation, they are wholly inadequate in non-smooth settings such as
those mentioned in the above references. 
Our work can therefore be seen as an attempt to recover results
similar to that of \cite{Cordes} and \cite{Chernoff}, but without assuming smoothness of the operator and the 
metric. A priori, our differential operator $\Dir:\Ck{0,1}(\cV) \to \Lp[loc]{\infty}(\cV)$,
and such operators can arise, for instance, as operators
built from Levi-Civita connections associated  to  Lipschitz metrics.
Under a set of mild assumptions \ref{D:First}-\ref{D:Last},  
we proceed by demonstrating that the essential self-adjointness
of this operator is equivalent to the 
\emph{negligible boundary} property formulated with 
respect to the operator $\Dir$ (see Definition \ref{Def:NegBdy}).
 This property is trivially satisfied
on compact manifolds and hence, our analysis is exclusively 
carried out in the non-compact setting.  
If the differential operator has $\Ck{m}$-coefficients,
then for $l \leq m +1 $, 
we demonstrate that the essential self-adjointness of $\Dir^l$
is equivalent to this aforementioned property for the operator
$\Dir^l$. Indeed, we do not expect essential self-adjointness, 
in general, to survive for orders $l > m +1$ for a $\Ck{m}$-coefficient operator.
 This is partly due to the fact that, 
even for an arbitrary smooth compactly supported $u$,
 $\Dir^l u$ exists only distributionally.

In \S\ref{S:EssNeg}, we demonstrate this equivalence under the mild set of assumptions 
\ref{D:First}-\ref{D:Last}, primarily motivated by features we would
expect from elliptic operators.  We emphasise that in demonstrating
this equivalence,  we do not make any assumptions about the 
completeness of the underlying space. We
proceed abstractly in order to emphasise this point,
and in fact, carry out this analysis
on spaces with so-called \emph{rough metrics}. These are 
Riemannian-like metrics that have merely measurable coefficients, 
which are comparable against induced Euclidean metrics 
in small enough charts.  A priori, such a metric only 
induces a well defined measure, and it is unknown 
whether there is a naturally  associated notion of length. 
The significance of the negligible
boundary criterion is that it can still be formulated 
on this measure space without alluding to a length structure.

In \S\ref{S:HtoL}, at this same level of generality, 
we are also able to 
show that if $\Dir^l$ is essentially self-adjoint,
then so are its lower powers. This is done via 
functional calculus and operator theoretic methods
to emphasise the fact that  this assertion makes no 
geometric demands.

Geometry begins to play a significant role
when attempting to boost essential self-adjointness
from lower to higher powers. 
We carry out this study in \S\ref{S:Ess},
and the  way in which we do this is to establish the negligible boundary 
condition for higher powers via a bootstrapping procedure. 
Here, we are forced to assume completeness, 
but we are able to allow for a restricted
class of rough metrics that induce a length space. 
This is still a large and significant class of non-trivial
metrics which, for instance, include bi-Lipschitz pullbacks
of  smooth,  complete  metrics. We emphasise that it is the lack of 
regularity that forces us away from  the  wave
technique in \cite{Chernoff} or 
the PDE technique in \cite{Cordes}, as these techniques
seem to require smoothness in a crucial way. 
By generalising
the ideas of \cite{Wolf},  we demonstrate  a certain 
elliptic estimate for lower powers of our operator
from knowing that the higher powers have finite $\Lp{2}$-energy.
This in turn allows us to obtain negligible boundary 
for higher powers via the first power.

In \S\ref{S:App}, we consider applications to 
Dirac and other elliptic differential operators.
As a first consequence, 
we highlight the following result for Spin manifolds with non-smooth 
metrics.
 
\begin{theorem}
\label{Thm:Main2}
Let $\mg$ be a $\Ck{0,1}$ complete metric
on a smooth Spin manifold $\cM$, with a spin
structure $\Prin{\Spin}(\cM)$. Let 
$\Spinors \cM = \Prin{\Spin} \times_\eta \Spinors\R^n$, 
where $\eta:\Spin_n \to \bddlf(\Spinors \R^n)$ is the
usual complex representation. 
Then, the associated \emph{Atiyah Singer Dirac 
operator} $\spin{\Dir}$ is essentially self-adjoint on $\Ck[c]{\infty}(\cV)$.
If the metric is $\Ck{m}$ for $m \geq 1$, then $\spin{\Dir}^l$
is essentially self-adjoint on $\Ck[c]{\infty}(\cV)$ for $l \leq m$.
\end{theorem}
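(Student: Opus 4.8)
The plan is to recognise $\spin{\Dir}$ as an instance of the abstract first-order operators treated in \secref{S:EssNeg}--\secref{S:Ess} and to invoke those results. The first task is to check that, for a $\Ck{0,1}$ metric $\mg$, the Atiyah--Singer Dirac operator $\spin{\Dir} = c \comp \conn$ has locally bounded measurable coefficients and satisfies the standing hypotheses \ref{D:First}--\ref{D:Last}. By Rademacher's theorem the components of $\mg$ are differentiable almost everywhere with locally bounded derivatives, so the Levi--Civita connection $\conn$ induced on $\Spinors\cM$ has Christoffel symbols in $\Lp[loc]{\infty}$, while Clifford multiplication $c$ is algebraic in $\mg$ and hence continuous; thus $\spin{\Dir} : \Ck{0,1}(\cV) \to \Lp[loc]{\infty}(\cV)$. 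Ellipticity is immediate, the principal symbol being Clifford multiplication by covectors, which is invertible off the zero section. Formal symmetry on $\Ck[c]{\infty}(\cV)$ follows from the standard Dirac-bundle identities (metric-compatibility of $\conn$, skew-symmetry of $c$), which hold pointwise almost everywhere at this regularity, together with the divergence theorem for compactly supported Lipschitz vector fields.

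Next I would observe that a complete $\Ck{0,1}$ metric lies within the scope of \secref{S:Ess}: being continuous (indeed locally bi-Lipschitz to a smooth metric), $\mg$ is a rough metric that induces a length structure, and the completeness hypothesis makes $(\cM,\mg)$ a complete length space of the restricted class considered there. Consequently the negligible boundary property of Definition \ref{Def:NegBdy} holds for $\spin{\Dir}$, and since $\spin{\Dir}$ has merely bounded measurable coefficients, the equivalence of \secref{S:EssNeg} yields that $\spin{\Dir}$ is essentially self-adjoint. When $\mg \in \Ck{m}$ with $m \geq 1$, the Christoffel symbols are $\Ck{m-1}$ and $c$ is $\Ck{m}$, so $\spin{\Dir}$ has $\Ck{m-1}$ coefficients; the results of \secref{S:EssNeg} and \secref{S:Ess} then give, for $l \leq (m-1) + 1 = m$, that $\spin{\Dir}^l$ is essentially self-adjoint, the negligible boundary condition for $\spin{\Dir}^l$ again being supplied by completeness through the bootstrapping argument of \secref{S:Ess} (with the descent to lower powers alternatively covered by \secref{S:HtoL}).

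A final point is that the abstract theorems produce essential self-adjointness on a Lipschitz core, whereas the statement asks for $\Ck[c]{\infty}(\cV)$. This is handled by mollification: after transporting the metric-dependent spinor bundle to the spinor bundle of a fixed smooth background metric via a bundle isometry of the same regularity as $\mg$, smooth compactly supported sections are dense, in the appropriate graph norm, among the compactly supported Lipschitz sections $\Ck[c]{0,1}(\cV)$, so the two operators share a self-adjoint closure. The main obstacle is precisely this low-regularity verification of the abstract hypotheses for $\spin{\Dir}$ --- confirming that formal symmetry and ellipticity persist with merely Lipschitz metric coefficients, and tracking exactly how the regularity of $\mg$ is inherited by the coefficients of $\spin{\Dir}$ so that $l \leq m$ emerges as the sharp range.
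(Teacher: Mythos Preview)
Your approach is essentially the same as the paper's: the theorem is obtained as a corollary of the general result for Dirac operators on Dirac bundles (\S\ref{Sec:App:Dirac}), which in turn reduces to verifying the hypotheses \ref{D:First}--\ref{D:Last} and then invoking Theorem~\ref{Thm:SecondMain}. The paper checks \ref{D:Close} via the divergence identity for compactly supported Lipschitz covector fields (Lemma~\ref{Lem:DivLip}) and checks \ref{D:Reg} in the critical $\Ck{0,1}$ case by a direct Fourier-transform argument in local frames (Proposition~\ref{Prop:DiracA2}), falling back on classical Schauder interior estimates for the $\Ck{m}$ case; your sketch of symmetry and ellipticity covers the same ground. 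The regularity bookkeeping ($\Ck{m}$ metric $\Rightarrow$ $\Ck{m-1}$ coefficients $\Rightarrow$ $l \leq m$) matches the paper exactly, and the observation that a continuous metric induces a length structure is precisely what the paper cites from Burtscher.

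Your final paragraph, however, is based on a misreading of the abstract framework. The operators $\Dir_c^k$ in \S\ref{S:MainRes} are defined with domain $\Ck[c]{\infty}(\cV)$ from the outset, and Theorems~\ref{Thm:FirstMain} and~\ref{Thm:SecondMain} deliver essential self-adjointness directly on that smooth core; there is no ``Lipschitz core'' to pass from. The mollification and bundle-isometry manoeuvre you describe is therefore unnecessary for the proof. (Mollification does appear in the paper, but in the opposite direction: it is used inside the proofs of Proposition~\ref{Prop:DomMax} and Proposition~\ref{lip_sym} to show that $\Ck[c]{0,1}(\cV) \subset \dom(\Dir_D)$, not to replace a Lipschitz core by a smooth one.)
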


Theorem 
\ref{Thm:Main2} is a particular instance of more
general results we obtain for Dirac operators on general Dirac bundles. 

As a second highlight theorem, we present the following
consequence of our work to general symmetric elliptic operators. 

\begin{theorem}
\label{Thm:Main1}
Let $\cV$ be a smooth bundle over a smooth manifold $\cM$ with 
continuous metrics $\mh$ and $\mg$. Suppose that  $\Dir$ is 
a first-order elliptic  differential operator with $\Ck{m}$-coefficients, $m \geq 0$,
that is symmetric on $\Ck[c]{\infty}$ in $\Lp{2}(\cV)$.
Then, $\Dir^k$ on $\Ck[c]{\infty}(\cV)$ for  $k = 1, \dots, m+1$  is essentially self-adjoint. 
\end{theorem}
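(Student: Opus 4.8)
The plan is to deduce \thmref{Thm:Main1} from the abstract machinery of \S\ref{S:EssNeg}--\S\ref{S:Ess} by checking that $\Dir$ and its relevant powers fall within their scope. First I would record that a continuous bundle metric $\mh$ and a continuous Riemannian metric $\mg$ are, in particular, rough metrics in the paper's sense: on each sufficiently small chart they are comparable to the induced Euclidean metrics, with comparability constants tending to $1$ as the chart shrinks, by continuity. A continuous $\mg$ moreover induces a genuine length structure on $\cM$, so $(\cM,\mg)$ is a length space; the essential self-adjointness conclusion requires this length space to be complete, and that completeness is the crucial geometric input that places us in the framework of \S\ref{S:Ess}.

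The second step is to verify that $\Dir$, and each power $\Dir^k$ with $k \le m+1$, satisfies the structural hypotheses \ref{D:First}--\ref{D:Last}. The inputs are: (i) $\Ck{m}$ regularity with $m \ge 0$ makes the coefficients of $\Dir$ continuous, hence locally bounded and measurable, and $\Dir$ maps $\Ck{0,1}(\cV)$ into $\Lp[loc]{\infty}(\cV)$; (ii) symmetry of $\Dir$ on $\Ck[c]{\infty}$ in $\Lp{2}(\cV)$ passes to $\Dir^k$, and the principal symbol of $\Dir^k$ is the $k$-th power of that of $\Dir$, so $\Dir^k$ is again elliptic; (iii) on expanding $\Dir^k$, at most $k-1 \le m$ derivatives land on the $\Ck{m}$ coefficients, so $\Dir^k$ has coefficients of class $\Ck{m-k+1}$ with $m-k+1 \ge 0$ --- in particular, for $u \in \Ck[c]{\infty}(\cV)$ the section $\Dir^k u$ is a genuine element of $\Lp[loc]{\infty}(\cV)$ and not merely a distribution, which is exactly where the restriction $k \le m+1$ comes from; and (iv) ellipticity supplies the local elliptic a priori estimates that \ref{D:First}--\ref{D:Last} abstract.

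With these in hand, the conclusion is assembled from the body of the paper. By the equivalence of \S\ref{S:EssNeg}, essential self-adjointness of $\Dir^k$ on $\Ck[c]{\infty}(\cV)$ is equivalent to the negligible boundary property for $\Dir^k$. By the completeness analysis and bootstrapping of \S\ref{S:Ess}, carried out in the length space $(\cM,\mg)$, the negligible boundary property holds for $\Dir$ and propagates to $\Dir^k$ for every $k \le m+1$, the bound again reflecting that each bootstrap step consumes one unit of the regularity budget. Hence $\Dir^k$ is essentially self-adjoint for $k = 1, \dots, m+1$. One could alternatively recover the lower powers from the top one via the high-to-low result of \S\ref{S:HtoL}, but the bootstrapping already delivers all powers directly.

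I expect the main obstacle to be step two: matching the low-regularity elliptic operator, and especially its powers $\Dir^k$, to the precise hypotheses \ref{D:First}--\ref{D:Last}. The delicate points are tracking how many derivatives fall on the $\Ck{m}$ coefficients when $\Dir^k$ is expanded --- so that $\Dir^k u$ is a locally bounded function rather than only a distribution for $u \in \Ck[c]{\infty}(\cV)$, which is what pins the bound at $k \le m+1$ --- and ensuring that the elliptic estimates driving the bootstrap of \S\ref{S:Ess} remain available for each $\Dir^k$ when only continuous metrics are at hand, so that one must lean on the rough-metric elliptic theory developed earlier rather than on classical smooth elliptic regularity.
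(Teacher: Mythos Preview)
Your proposal is correct and mirrors the paper's own argument: in \S\ref{S:App} the theorem is deduced from \thmref{Thm:SecondMain} by verifying \ref{D:Close} via a partition-of-unity reduction to the assumed symmetry on $\Ck[c]{\infty}(\cV)$ and \ref{D:Reg} via classical interior elliptic regularity, precisely the checks you outline. Your remark that completeness of the induced length space is the essential geometric input is apt---the version of the theorem actually proved in \S\ref{S:App} explicitly assumes $\mg$ induces a complete length space, a hypothesis tacit in the introductory statement.
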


Our theorems generalise the consequences of the work of Wolf in \cite{Wolf} 
and Chernoff in \cite{Chernoff} to Dirac operators
to settings where smoothness assumptions on the metrics
are relaxed.  In Theorem \ref{Thm:Main1},
we  recover the results of Chernoff from  \cite{Chernoff}
in the elliptic setting, and we also
dispense with the \emph{local velocity of propagation} condition
assumed in that paper.

\section*{Acknowledgements}
Hemanth Saratchandran wishes to acknowledge support from the Beijing International Centre
for Mathematical Research, and the Jin Guang Mathematical Foundation.
Lashi Bandara was supported by the Knut and Alice Wallenberg foundation, KAW 2013.0322 
postdoctoral program in Mathematics for researchers from outside Sweden.
The authors would like to thank the referee for suggestions
which helped improve the manuscript. 
\section{Setup and main results}

\subsection{Notation}

Throughout this article, we will use the analysts inequality
$a \lesssim b$ to mean that $a \leq C b$ with $C > 0$, as well
as the analysts equivalence $a \simeq b$. The support of a section (or
function) $f$ will be denoted by $\spt{f}$. 
Unless otherwise stated, we will assume Einstein summation convention 
throughout this paper. That is, whenever a raised index appears multiplicatively against
a lowered index, we assume summation over that index. By \emph{precompact set}, 
we mean a set whose closure is compact. 
 
\subsection{Vector bundles over manifolds}

Let $\cM$ be a connected, non-compact, smooth  manifold 
and $\pi_{\cV}: \cV \to \cM$ be a smooth vector bundle
of finite dimension $\dim \cV = N$. By $\cV_x$, we denote the fibre
over the point $x \in \cM$, given by $\pi_{\cV}^{-1}\set{x}$.

The locally Euclidean structure allows us to define
spaces of regularity in a local sense.
The spaces $\Ck{k,\alpha}(\cV)$ will be used to denote $k$
times  differentiable
sections  for which the $k$-th derivative is 
 $\alpha$-H\"older continuous \emph{locally}.
The notation $\Ck[c]{k,\alpha}(\cV)$ will be such sections that
are compactly supported. 

 Moreover,  this locally Euclidean structure 
allows  us to import measure-theoretic notions in the
absence of a metric: we say that a set $A$ is measurable
if $\phi(A \intersect U)$ is   Lebesgue measurable  for all coordinate
charts  $(U, \phi)$  with  $U \intersect A \neq \emptyset$.
This allows us to define measurable functions $f: \cM \to \C$, 
and by $\Sect(\cV)$, we denote measurable sections over $\cV$ 
to be sections $v = v_i e^i$ in continuous  local frames $\set{e^i}$ 
with $v_i$ a measurable function. See \cite{BRough} for 
a detailed construction. 

Using our notion of measurability, we define $\Lp[loc]{1}(\cV)$ to be
measurable sections $v$ such that $\psi^{-1} v \in \Lp[loc]{1}(U, \C^N)$
 over local trivialisations $\psi: U \times \C^N \to \cV$.
 The definition of local Sobolev spaces follow similarly:
we say that $u \in \Sob[loc]{k,p}(\cV)$ if  $\psi^{-1} u \in \Sob[loc]{k,p}(U, \cV)$.

\subsection{Rough metrics}

Since our goal is to study \emph{global} differential operators over $\cV$,
on letting $\adj{\cV}$ be the dual bundle of $\cV$,
we define the   following notion of a metric tensor. 

\begin{definition}[Rough metrics and bundle rough metrics]
Let $\mh \in \Sect(\adj{\cV} \tensor \adj{\cV})$ be
real and symmetric, and suppose that for each $x \in \cM$, 
there exists a trivialisation $(U_x,\psi_x)$ containing $x$ 
and a constant $C = C(U_x) \geq 1$ satisfying:
$$ C^{-1} \modulus{u}_{\mh(y)} \leq \modulus{\psi_x(y) u}_{\delta} \leq C \modulus{u}_{\mh(y)}$$
for almost-every $y \in U_x$  and $u \in \cV_x$, 
where $\delta$ denotes the Euclidean metric in $\C^N$. 
Then, we say that such a metric is a \emph{bundle rough metric},
and such a trivialisation is said to satisfy the \emph{local comparability
condition}. 

In the situation where $\cV = \tanb\cM$, we say that $\mg$ is a \emph{rough metric}
on $\cM$ if in  addition, it is real-valued and the  trivialisations are induced
by coordinate charts.
\end{definition}
 
We distinguish the latter notion of a rough metric from a 
bundle rough metric since the chart induced trivialisations
are necessary to setup a measure.  More precisely,  given a 
rough metric $\mg$ on $\cM$, we obtain 
a global measure $\mu_\mg$ by pasting together  
$$ d\mu_\mg = \sqrt{\det \mg_{ij}}\ d\Leb,$$
via a smooth partition 
of unity subordinate to a covering of $\cM$
by locally comparable charts. This is readily checked
to define a Borel-measure that is finite on compact sets.
We emphasise that a priori, we do not know whether 
$\mg$ induces a length structure. Moreover, a
set $A$ is measurable if and only if it is $\mu_\mg$-measurable.


To emphasise the generality and importance of rough metrics, we give some examples.
Throughout, $\cM$ will denote a smooth manifold.

\begin{example}[Conformally rough metrics]
\label{Ex:Conf}
Let $\mg$ be a smooth (or continuous) metric, and let $f \in \Lp[loc]{\infty}(\cM)$
such that for every compact set $K \subset \cM$, there exists
a constant $\kappa_K$ such that $f \geq \kappa_K$. Then, 
$\mh_f(x) = f(x) \mg(x)$ defines a rough metric.
\end{example} 

Examples of the above type of rough metric were considered in 
\cite{CD} in the context of Ricci flow with degenerate initial metrics.

\begin{example}[Geometry of divergence form operators]
Let $\mg$ be smooth and suppose that $A$ is a real-valued symmetric $(1,1)$-measurable 
bounded, accretive tensorfield, by which we mean: there exist $\kappa > 0$ and $\Lambda < \infty$
such that $\kappa \leq \mg_x(A(x)u, u) \leq \Lambda$ for $x$ almost-everywhere. 
The corresponding divergence form
operator with these coefficients is $\Div_A = -\divv_{\mg} A \conn$ which is the
operator obtained for the symmetric form $J_A[u,v] = \inprod{A \conn u, \conn v}$
with $\dom(J_A) = \SobH{1}(\cM)$. Then, the metric $\mh_x(u,v) = \mg_x(A(x) u,v)$
defines a rough metric and it corresponds to the geometry 
of the operator $\Div_A$. 
\end{example} 

The following examples were first considered in \cite{BLM}.

\begin{example}[Witches hat sphere]
Let $\cM = {\rm S}^n$, the $n$-sphere and $\mh_{R}$ be the round metric.
Let $\phi:B_{1}(p) \to B_{1}^\delta$ be a coordinate chart from 
the ball of radius $1$ near the north pole $p$ to the Euclidean ball of radius $1$.
Inside $B_1^\delta$, define $F: B_1^\delta \to \R^{n+1}$ as
$F(x) = (x, 1  - 2\modulus{x})$ for $x \in B_{1/2}^\delta$
and $F(x) = (x,0)$ for $x \in B_1^\delta \setminus B_{1/2}^\delta$.
The map $F$ is Lipschitz, and its graph on the ball $B_{1/2}^\delta$
is a Euclidean cone.It is easy to see that we can smooth the map $F$ slightly
at $\modulus{x} = 1/2$ to obtain a smooth map $G$ and define
$$ \mg(x) = \begin{cases} \pullb{(G \comp \phi}\delta^n)(x), &x \in B_{1}(p) \\
			\mg_R(x),  &x \not\in B_{1}(p).
	\end{cases}$$
It is clear that this map is smooth everywhere but the north pole $p$
and that $\mg$ is isometric to the Euclidean cone on the ball $B_{1/2}(p)$.
It is readily verified that $(\cM,\mg)$ is a rough metric space.

A generalisation of the above is to replace the sphere with a cylinder of the form 
${\rm S}^n \times (-\infty, 0]$. Then attach a cone, as we did above, to the part ${\rm S}^n \times \{0\}$. This
will produce a non-compact rough metric space.
\end{example}

\begin{example}[Euclidean Box]
Denote the Euclidean box in dimension 
$n$ by 
$${\rm B}^n = \partial\bbrac{-\sqrt{\frac{1}{2(n+1)}}, \sqrt{\frac{1}{2(n+1)}}}^{n+1},$$
and define the radial projection map $G:{\rm B}^n \to \rm{S}^n$ 
by $$G(x) = \frac{x}{\modulus{x}}.$$
A direction calculation via the induced distances show that 
$G$ is a Lipschitz map, and hence, $\mg = \pullb{(G^{-1})}\delta$
is a rough metric on the sphere. Since this is an isometry
between ${\rm B}^n$ with the induced metric
and $({\rm S}^n, \mg)$, we see that the Euclidean box
can be realised as a smooth manifold with a rough metric.
\end{example}

We should also mention that every $C^{k, \alpha}$ metric, for $k \geq 0$ and $\alpha \in [0, 1]$, is rough metric. 
In particular $C^{\alpha}$-limits of Riemannian manifolds, studied by Anderson and Cheeger in 
\cite{AC}, obtained from their precompactness theorem, are rough. 
A general study, analogous to the approach in \cite{AC}, for limits of manifolds with rough metrics does 
not seem possible at this point. The main problem is that there is very little known about the existence of
precompactness theorems for general rough metrics.


\subsection{Global function spaces}

Throughout the remainder of this paper, unless
otherwise specified,  we fix a rough metric
$\mg$ on $\cM$ and a bundle rough metric $\mh$ on $\cV$.
 If $\cV$ and $\mh$ are real,  we consider $\mh$ as a complex-valued inner product
by complexifying $\cV$.
For an open set $U$, define $\Lp{p}(U, \cV)$ spaces as the 
set of measurable distributions $\xi$ such that 
$$ \norm{\xi}^p_{U,p} = \norm{\xi}_{\Lp{p}(U,\cV)}^p = \int_{U} \modulus{\xi(x)}_{\mh(x)}^p\ d\mu_\mg(x) < \infty.$$
When $U = \cM$, we simply write $\Lp{p}(\cV)$
and $\norm{\xi}_p = \norm{\xi}_{\Lp{p}(\cM,\cV)}$.
In the special case of $p = 2$,  the space   $\Lp{2}(U,\cV)$
is a Hilbert space with the inner product 
$$ \inprod{u, v}_{\Lp{2}(U,\cV)} = \int_{U} \mh_x(u(x),v(x))\ d\mu_\mg(x).$$
The $\Lp{2}(\cV)$ inner product will be denoted by $\inprod{\mdot,\mdot}$
and the induced norm by $\norm{\mdot}$.

It can be readily verified
that $\Lp[loc]{1}(\cV)$ is the space of measurable
$\xi$ such that $\xi \in \Lp{1}(K, \cV)$
for all pre-compact $K \Subset \cV$. Furthermore,
the local Sobolev spaces $\Sob[loc]{k,p}(\cV)$
can be characterised by $\xi \in \Lp{p}(\cV)$
such that 
$$ \sum_{i=1}^k \norm{\conn^i \xi}_{K, p} < \infty$$
for every smooth connection $\conn$ over pre-compact 
$K \Subset \cM$. Note that any two smooth connections $\conn_1$ and $\conn_2$
over a pre-compact $K$ are comparable in the sense that  
$$\sum_{j=1}^l \norm{\conn_1^j u}_{K, p} + \norm{u}_{K,p}  
	\simeq \sum_{j=1}^l \norm{\conn_2^j u}_{K, p} + \norm{u}_{K,p}$$
for $l \geq 1$. 

\subsection{Operator theory}

In what is to follow, we will require some notions from 
operator theory. Fixing a Hilbert space $\Hil$,
we consider operators $T: \dom(T) \subset \Hil \to \Hil$, 
where $\dom(T)$ is a subspace of $\Hil$ 
called the \emph{domain} of the operator.
The \emph{range} of $T$ is denoted by $\ran(T)$
and its \emph{null space}, or \emph{kernel}, by $\nul(T)$.

An operator is \emph{densely-defined} if $\dom(T)$
is dense in $\Hil$, and it is \emph{bounded} if
there exists a $C > 0$ such that 
$\norm{Tu} \leq C \norm{u}$. 
We say that an operator is \emph{closed} 
if 
$u_n \to u$ and $Tu_n \to v$ implies
that $u \in \dom(T)$ and $v = Tu$. 
This is equivalent to saying that the
graph $\graph(T) = \set{(u,Tu): u \in \dom(T)}$ is 
a closed subset of $\Hil \times \Hil$.
A subspace $\sA \subset \dom(T)$ is called
a \emph{core} for $T$ if it is dense in 
$\dom(T)$ with respect to the graph 
norm $\norm{\mdot}_{T} = \norm{\mdot} + \norm{T\mdot}$.

An operator is \emph{closable} if $u_n \to 0$ 
and $Tu_n \to v$ implies that $v = 0$.
In that case, $\close{\graph(T)} = \graph(\tilde{T})$
where $\tilde{T}$ is a closed operator.
We write the closure of the operator as $\close{T} = \tilde{T}$.
A densely-defined operator $T$ admits a closed
operator $\adj{T}$ called 
\emph{the adjoint} of $T$ 
with domain
 $$ \dom(\adj{T}) = \inprod{ u\in \Hil: v \mapsto \inprod{Tv, u}\text{ is continuous}}.$$ 
 The operator $\adj{T}$ is defined as follows:
for $u \in \dom(\adj{T})$, there exists $f_u \in \Hil$
such that $\inprod{Tv, u} = \inprod{v, f_u}$ by the Riesz-representation 
theorem, and $\adj{T}u = f_u$.  

An operator $S$ is said to be \emph{an adjoint}
of $T$ if 
$ \inprod{Tu,v} = \inprod{u, Sv}$
for all $u \in \dom(T)$ and $v \in \dom(S)$.
In particular, whenever $T$ and $S$ are densely-defined, 
they are both closable and admit densely-defined adjoints.
An operator $T$ is symmetric if $T$ is \emph{an adjoint}
to itself. However, typically $T \subset \adj{T}$, 
by which we mean that $\dom(T) \subset \dom(\adj{T})$
and $\adj{T} = T$ on $\dom(T)$. An operator
$T$ is \emph{self-adjoint} if $\adj{T} = T$. A
symmetric operator is \emph{essentially self-adjoint}
if it admits a unique self-adjoint extension $T_s$.
In this situation, it is readily checked that
$\close{T} = T_s = \adj{T}$.

\subsection{Main results}
\label{S:MainRes}

Throughout this paper,
whenever we say \emph{first-order differential operator},
we will assume it is an operator
 that takes the form $A^i \partial_i + B$
locally, with coefficients $A^i, B \in \Lp[loc]{\infty}(\adj{\cV} \tensor \cV)$,
 and $A^i \neq 0$ for some $i$. 
This is a linear map  
$\Dir: \Ck{0,1}(\cV) \to \Lp[loc]{\infty}(\cV)$,
that is local and for which   
$\Mul_\eta = \comm{\Dir, \eta \iden}$ is an almost-everywhere  nonzero, fibrewise, bounded multiplication 
operator for $\eta \in \Ck{0,1}(\cM)$. 
 Note that for almost-every $x \in \cM$ 
and each fibre norm $\modulus{\mdot}_x$ on $\cV_x$, 
there exist $C_x > 0$ such that 
$\modulus{\Mul_{\eta}(x)} \leq C_x \modulus{\conn \eta(x)}.$

If  for  $m \geq 0$, $\Dir: \Ck{\infty}(\cV) \to \Ck{m}(\cV)$,
and $\Dir: \Ck{l}(\cV) \to \Ck{l-1}$ for $0 < l \leq m+1$,
then we say that  $\Dir$ has  $\Ck{m}$ coefficients.  

Throughout, let $l \leq m+1$
for a $\Ck{m}$ coefficient operator 
(set $m =0$ if
the operator has $\Lp[loc]{\infty}$ coefficients) 
and  define:  
$$\Sp_l = \Sp_l(\Dir) = \set{u \in \Ck{\infty} \intersect \Lp{2}(\cV): \Dir^l u \in \Lp{2}(\cV)}.$$
Denote the $l$-graph norm of the operator by:
$ \norm{u}_{\Dir^l} = \norm{\Dir^l u} + \norm{u}.$ 
For a function $\eta \in \Ck{\infty}(\cM)$, 
define the commutators
$\Mul^l_\eta u = \comm{\Dir^l, \eta \iden}u,$
for $u \in \Ck{\infty}(\cV)$.  The
commutator $\Mul^1_\eta$ will be denoted by
$\Mul_\eta$. 

We present the following two axioms under which
we prove the most general  results of this paper.
In \S\ref{S:App}, we illustrate that these
axioms are valid for a wide class of elliptic operators.
\begin{enumerate}[({A}1)]
\item
\label{D:First}
\label{D:Close}
whenever $u \in \Sp_1(\Dir)$ and $v \in \Ck[c]{\infty}(\cV)$,
$ \inprod{\Dir u, v} = \inprod{u, \Dir v},$

\item
\label{D:Reg}
\label{D:Last} 
whenever $u \in \Lp{2}(\cV)$ and $\Dir^l u \in \Lp[loc]{2}(\cV)$, then $u \in \Sob[loc]{l,2}(\cV)$.
\end{enumerate}

\begin{remark}
The condition \ref{D:Reg} is an $\Lp{2}$-ellipticity condition 
on the operator $\Dir$. It is automatically satisfied for 
$\Ck{m}$ coefficient elliptic operators. We formulate this
as an axiom since we only require this weaker formulation, 
and as we shall see in \S\ref{S:App}, it can be proved
in instances where the coefficients are merely 
$\Lp[loc]{\infty}$.
\end{remark}

For integers  $1 \leq k \leq m+1$, 
let $\Dir_c^k = \Dir^k$ with domain $\dom(\Dir_c^k) = \Ck[c]{\infty}(\cV)$
and $\Dir_2^k = \Dir^k$ with domain $\dom(\Dir_2^k) = \Sp_k(\Dir).$
For the case $k = 1$, \ref{D:Close} implies that $\Dir_2$ and $\Dir_c$
are closable, and  on letting $\Dir_N = \close{\Dir_2}$ and $\Dir_D = \close{\Dir_c}$,
we obtain that 
$\inprod{\Dir_N u, v} = \inprod{u, \Dir_D v}$
for $u \in \dom(\Dir_N)$ and $v \in \dom(\Dir_D)$. Moreover, using
\ref{D:Reg} and a mollification argument, we obtain that 
$\Ck[c]{0,1}(\cV) \subset \dom(\Dir_D)$. Fix $v \in \Ck[c]{\infty}(\cV)$,
let $V$ be a pre-compact set satisfying
 $\spt v \subset V$, and $\set{\eta_i}_{i=1}^M$ be smooth partition of unity on $V$. 
For $u \in \Sp_k(\Dir)$, write $u = \sum_{i=1}^M \eta_i u$ on $V$. 
Set $u_i = \eta_i u$, extended to zero outside of $\spt \eta_i$.
Since $\Dir$ is a $\Ck{m}$ coefficient operator,
we have that $\Dir^p (u_i) \in \Ck[c]{m - p}(\cV) \subset \Ck[c]{0,1}(\cV)$ for $p \leq k -1$.
Then,
\begin{multline*}
\inprod{\Dir^k u,v} 
	= \sum_{i=1}^M \inprod{\Dir_N^k (u_i), v}
	= \sum_{i=1}^M \inprod{\Dir_N \Dir^{k-1}_N (u_i),v} \\
	= \sum_{i=1}^M \inprod{\Dir^{k-1}_N u_i , \Dir_D v} 
	= \dots
	= \sum_{i=1}^M \inprod{u_i, \Dir_D^k v}
 	= \inprod{u, \Dir^k  v}.
\end{multline*}
That is, 
$$ \inprod{\Dir^k u, v} = \inprod{u, \Dir^k v},\quad \forall u \in \Sp_k(\Dir),\ 
	\forall v \in \Ck[c]{\infty}(\cV).$$
Observe that if we simply asked that $\Dir$ be symmetric on $\Ck[c]{\infty}(\cV)$, 
then a similar calculation would yield \ref{D:Close} as a consequence.

As a consequence of this symmetry condition for $\Dir^k$,
we obtain that the operators $\Dir^k_c$ and $\Dir^k_2$ are closable
and as for the $k=1$ case, we write 
$(\Dir^k)_D$ and $(\Dir^k)_N$ respectively to denote the closures of these operators
with domains $\dom^k_0(\Dir)$ and $\dom^k(\Dir)$.
By viewing $\Dir^k$ distributionally, i.e., for $u \in \Lp{2}(\cV)$
defining $(\Dir^k u)(v) = \inprod{u, \Dir^k v}$ for $v \in \Ck[c]{\infty}(\cV)$,
define the maximal domain as 
$$\dom^k_\infty(\Dir) = \set{u \in \Lp{2}(\cV): \Dir^k u \in \Lp{2}(\cV)}.$$

In addition to the conditions \ref{D:First}-\ref{D:Last}, 
a fundamental criteria  we use and exploit throughout
this paper  is the following. 

\begin{definition}[Negligible Boundary]
\label{Def:NegBdy}
We say that the operator $\Dir^l$ exhibits \emph{negligible boundary} if
\begin{equation*}
\tag{$l$-Neg}
\label{lneg}
\inprod{\Dir^l u, v} = \inprod{u, \Dir^l v}
\end{equation*}
for all $u, v \in \Sp_l(\Dir)$.
\end{definition}

The following theorems we present are phrased 
at the level of generality of rough metrics,
in particular so we can emphasise the regularity 
features that are necessary for obtaining the conclusions. Moreover, 
this allows us to divorce assumptions on 
the coefficients on the operator and the underlying metric.

The first theorem we present is the following. It is proved in \S\ref{S:EssNeg2}. 

\begin{theorem}
\label{Thm:FirstMain}
Let $\Dir^l$ satisfy \ref{D:First}-\ref{D:Last} on a
bundle $\cV$ with a bundle rough metric $\mh$ over
$\cM$ with a rough metric $\mg$. Then, the following are equivalent:
\begin{enumerate}[(i)]
\item $\dom^l_0(\Dir) = \dom^l(\Dir)$, 
\item $(\Dir^l)_D$ is self-adjoint, 
\item $(\Dir^l)_N$ is self-adjoint,
\item $\Dir^l$ satisfies \eqref{lneg},
\item $\Dir_c^l$ is essentially self-adjoint.
\end{enumerate}
\end{theorem}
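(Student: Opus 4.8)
The plan is to establish the cycle of implications (v) $\Rightarrow$ (ii) $\Rightarrow$ (i) $\Rightarrow$ (iv) $\Rightarrow$ (iii) $\Rightarrow$ (v), or a convenient subset that closes the loop, treating the whole argument through the abstract operator-theoretic identities set up in the excerpt. The three operators in play are $(\Dir^l)_D = \close{\Dir^l_c}$ with domain $\dom^l_0(\Dir)$, $(\Dir^l)_N = \close{\Dir^l_2}$ with domain $\dom^l(\Dir)$, and the maximal (distributional) operator with domain $\dom^l_\infty(\Dir)$. The key algebraic input, already derived in the preamble, is the symmetry pairing $\inprod{\Dir^k u, v} = \inprod{u, \Dir^k v}$ for $u \in \Sp_k(\Dir)$, $v \in \Ck[c]{\infty}(\cV)$, together with its closed version $\inprod{(\Dir^l)_N u, v} = \inprod{u, (\Dir^l)_D v}$. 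The first thing I would record carefully is the chain of inclusions $(\Dir^l)_D \subset (\Dir^l)_N \subset \adj{((\Dir^l)_D)}$ and, dually, $(\Dir^l)_D \subset \adj{((\Dir^l)_N)} \subset \adj{((\Dir^l)_D)}$; the middle inclusion in the first chain is precisely the pairing identity, and the rightmost follows since $(\Dir^l)_N$ is an adjoint of $(\Dir^l)_D$. I would also want to identify $\adj{((\Dir^l)_D)}$ with the maximal operator on $\dom^l_\infty(\Dir)$: an element $u$ lies in $\dom(\adj{((\Dir^l)_D)})$ iff $v \mapsto \inprod{\Dir^l v, u}$ is $\Lp{2}$-continuous on $\Ck[c]{\infty}(\cV)$, i.e. iff the distribution $\Dir^l u$ is represented by an $\Lp{2}$ section — which is the definition of $\dom^l_\infty(\Dir)$. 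Here axiom \ref{D:Reg} is what guarantees such a $u$ is actually in $\Sob[loc]{l,2}(\cV)$, so that $\Dir^l u$ makes classical (a.e.) sense and $\Sp_l(\Dir) \cap$ (appropriate decay) is the right object; more precisely \ref{D:Reg} gives $\dom^l_\infty(\Dir) \subset \Sob[loc]{l,2}(\cV)$, and one checks via mollification (as in the $\Ck[c]{0,1}$ argument already in the text) that this is consistent with the density statements needed below.

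With these identifications, the equivalences become essentially formal. For (v) $\Leftrightarrow$ (ii): $\Dir^l_c$ essentially self-adjoint means by definition its closure $(\Dir^l)_D$ is self-adjoint (uniqueness of the self-adjoint extension of a symmetric operator being equivalent to self-adjointness of the closure — this is standard and stated in the excerpt's operator-theory subsection). For (ii) $\Leftrightarrow$ (iii): if $(\Dir^l)_D$ is self-adjoint then from $(\Dir^l)_D \subset (\Dir^l)_N \subset \adj{((\Dir^l)_D)} = (\Dir^l)_D$ we force $(\Dir^l)_N = (\Dir^l)_D$, hence $(\Dir^l)_N$ self-adjoint; conversely if $(\Dir^l)_N$ is self-adjoint, then taking adjoints in $(\Dir^l)_D \subset (\Dir^l)_N$ gives $(\Dir^l)_N = \adj{((\Dir^l)_N)} \subset \adj{((\Dir^l)_D)}$, and combined with $(\Dir^l)_N \supset (\Dir^l)_D$ and $\adj{((\Dir^l)_D)} \supset (\Dir^l)_N$ (already noted), one pins everything down to a single self-adjoint operator — and hence (ii) as well. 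For (i): (i) says $\dom^l_0(\Dir) = \dom^l(\Dir)$, i.e. $(\Dir^l)_D = (\Dir^l)_N$ as operators (they agree on the common core $\Sp_l$ by the pairing identity and both are closed), which by the previous sentence is equivalent to both being self-adjoint. Finally (iv), the negligible-boundary identity $\inprod{\Dir^l u, v} = \inprod{u, \Dir^l v}$ for all $u,v \in \Sp_l(\Dir)$, says exactly that $(\Dir^l)_N$ is symmetric; since it is closed and sits between $(\Dir^l)_D$ and $\adj{((\Dir^l)_D)} = \adj{((\Dir^l)_N)}$ (using (A1)-type symmetry to get $\adj{((\Dir^l)_N)} \supset (\Dir^l)_D$... here I must be a little careful), symmetry of $(\Dir^l)_N$ upgrades to self-adjointness because a closed symmetric operator $T$ with $T \subset \adj{T}$ and $\adj{T} \subset \biadj{((\Dir^l)_D)} = \close{(\Dir^l)_D} = (\Dir^l)_D \subset T$ is forced to equal its adjoint. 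Conversely self-adjointness of $(\Dir^l)_N$ trivially gives the symmetric pairing (iv) on $\Sp_l \subset \dom((\Dir^l)_N)$.

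The main obstacle — and the step I would spend the most care on — is the interplay between the three domains and, in particular, making rigorous that $\adj{((\Dir^l)_D)}$ restricted to the right function class coincides with $(\Dir^l)_N$'s adjoint, which is where \ref{D:Reg} does genuine work rather than just bookkeeping. Concretely, the subtle point is that $\Sp_l(\Dir)$ is defined using $u \in \Ck{\infty}$, whereas the maximal domain $\dom^l_\infty(\Dir)$ allows merely $\Lp{2}$ sections with distributional $\Dir^l u \in \Lp{2}$; axiom \ref{D:Reg} promotes such $u$ to $\Sob[loc]{l,2}$, and then a mollification/cutoff argument (mirroring the $\Ck[c]{0,1} \subset \dom(\Dir_D)$ computation already carried out in the text, iterated $l$ times) is needed to show that elements of $\dom^l_\infty(\Dir)$ are graph-norm limits of compactly supported smooth sections exactly when the negligible-boundary condition holds — this is the heart of the equivalence $\text{(iv)} \Leftrightarrow \text{(i)}$. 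I would isolate this as a lemma: under \ref{D:First}-\ref{D:Last}, $\adj{((\Dir^l)_D)}$ is the maximal operator, $\adj{((\Dir^l)_N)} = (\Dir^l)_D$, and $(\Dir^l)_N$ is symmetric iff $(\Dir^l)_N \subset \adj{((\Dir^l)_N)}$ iff $\dom^l(\Dir) \subset \dom^l_0(\Dir)$. Once that lemma is in hand, the five-way equivalence is a short diagram chase using only the facts that $\biadj{S} = \close{S}$ for closable $S$, that $S \subset T \Rightarrow \adj{T} \subset \adj{S}$, and the definition of essential self-adjointness from the operator-theory subsection; I would not expect any analytic difficulty beyond the regularity lemma, which is precisely why the theorem can be stated at the level of rough metrics without any completeness hypothesis.
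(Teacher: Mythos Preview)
Your overall architecture is correct and matches the paper's: the five-way equivalence reduces to a formal diagram chase once one proves the key identity $\adj{((\Dir^l)_D)} = (\Dir^l)_N$ (equivalently $\dom^l_\infty(\Dir) = \dom^l(\Dir)$), which is exactly the paper's Proposition~\ref{Prop:DomMax}. From that, your lemma claim $\adj{((\Dir^l)_N)} = (\Dir^l)_D$ follows by taking biadjoints, and the implications you outline close up just as in the paper.

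Where your plan goes slightly off the rails is in the description of \emph{what} the mollification step establishes. You write that one should ``show that elements of $\dom^l_\infty(\Dir)$ are graph-norm limits of \emph{compactly supported} smooth sections exactly when the negligible-boundary condition holds.'' That is not the right target, and it is not how the argument runs. The mollification step is \emph{unconditional}: using \ref{D:Reg} to get $u \in \Sob[loc]{l,2}$, a partition of unity plus local mollification produces a \emph{smooth} (but \emph{not} compactly supported) approximant $u^\delta \in \Sp_l(\Dir)$ with $u^\delta \to u$ in the $\Dir^l$-graph norm. This gives $\dom^l_\infty(\Dir) = \dom^l(\Dir)$ outright, with no reference to \eqref{lneg}. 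The further collapse $\dom^l(\Dir) = \dom^l_0(\Dir)$ --- replacing smooth by compactly supported smooth --- is precisely what is \emph{equivalent} to (i)--(v); it is not obtained by any mollification/cutoff argument at this stage, and indeed requires either completeness (as in \S\ref{S:Ess}) or one of the hypotheses (ii)--(v) as input. Conflating these two density statements would leave you unable to prove the unconditional half of your lemma (specifically $\adj{((\Dir^l)_N)} = (\Dir^l)_D$), since that identity needs the full equality $\dom^l_\infty = \dom^l$, not just the easy containment $\dom^l \subset \dom^l_\infty$.

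A second minor point: in your (iv)$\Rightarrow$(iii) chain you write ``$\adj{T} \subset \biadj{((\Dir^l)_D)}$'', which does not follow from $(\Dir^l)_D \subset (\Dir^l)_N$ (taking adjoints reverses inclusions, giving $\adj{((\Dir^l)_N)} \subset \adj{((\Dir^l)_D)}$, the wrong direction). The clean route --- and what the paper does --- is: assume \eqref{lneg}, so $(\Dir^l)_N \subset \adj{(\Dir^l_2)}$; then $\Dir^l_c \subset \Dir^l_2$ gives $\adj{(\Dir^l_2)} \subset \adj{(\Dir^l_c)} = (\Dir^l)_N$ (using Proposition~\ref{Prop:DomMax}), forcing $\adj{(\Dir^l_2)} = \adj{(\Dir^l_c)}$ and hence $(\Dir^l)_N = (\Dir^l)_D$ by taking biadjoints. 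This is essentially what you intend once your lemma part (b) is in hand, but the justification should go through Proposition~\ref{Prop:DomMax} rather than the ad hoc inclusion you wrote.
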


The next theorem allows us to deduce essential
self-adjointness of lower powers of the operator
when we know this for higher powers. 
It is proved in \S\ref{S:core}. 

\begin{theorem}
\label{Thm:FirstMain2}
Let $\Dir$ satisfy \ref{D:First}-\ref{D:Last} on a
bundle $\cV$ with a bundle rough metric $\mh$ over
$\cM$ with a rough metric $\mg$.  
If $\Dir^l$ on $\Ck[c]{\infty}(\cV)$ is essentially self-adjoint, then 
$\Ck[c]{\infty}(\cV)$ is a core for $\modulus{\Dir_D}^\alpha$
for $\alpha \in [0, k]$ and moreover, it is a
core for $\Dir_D^k$ for $k = 1, \dots, l$.
\end{theorem}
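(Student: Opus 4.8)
The plan is to deduce everything from \thmref{Thm:FirstMain} together with standard functional-calculus and operator-theoretic facts. Assume $\Dir^l$ on $\Ck[c]{\infty}(\cV)$ is essentially self-adjoint, so by \thmref{Thm:FirstMain} applied to $\Dir^j$ for each $j \leq l$ — or more directly, by first observing via \thmref{Thm:FirstMain2}'s sibling result in \secref{S:HtoL} that essential self-adjointness of $\Dir^l$ forces essential self-adjointness of $\Dir^j$ for all $1 \le j \le l$ — we get that $\Dir_c^j$ is essentially self-adjoint and $\close{\Dir_c^j} = (\Dir^j)_D = (\Dir^j)_N =: (\Dir^j)_s$ is self-adjoint for each such $j$. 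In particular $\Dir_D := \close{\Dir_c}$ is self-adjoint, and one checks that $(\Dir_D)^j \subseteq (\Dir^j)_s$ with $\Ck[c]{\infty}(\cV)$ a common dense subspace; since both are self-adjoint and $(\Dir^j)_s$ is an extension of the self-adjoint operator $(\Dir_D)^j|_{\Ck[c]{\infty}}$'s closure, maximality of self-adjoint operators among symmetric ones gives $\overline{(\Dir_D)^j} = (\Dir^j)_s$. This already identifies the closures and is the backbone of the argument.

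Next I would handle the fractional powers $\modulus{\Dir_D}^\alpha$ for $\alpha \in [0,k]$, where $k \le l$. Since $\Dir_D$ is self-adjoint, the Borel functional calculus defines $\modulus{\Dir_D}^\alpha := f_\alpha(\Dir_D)$ with $f_\alpha(\lambda) = \modulus{\lambda}^\alpha$, a self-adjoint operator with domain $\dom(\modulus{\Dir_D}^\alpha) = \{u : \int (1+\lambda^2)^\alpha \, d\inprod{E_\lambda u, u} < \infty\}$ where $E$ is the spectral measure of $\Dir_D$. The claim is that $\Ck[c]{\infty}(\cV)$ is a core, i.e. dense in $\dom(\modulus{\Dir_D}^\alpha)$ in the graph norm $\norm{u} + \norm{\modulus{\Dir_D}^\alpha u}$. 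The standard device is: for $u \in \Ck[c]{\infty}(\cV)$ and bounded Borel $g$, the vector $g(\Dir_D)u$ lies in $\dom(\modulus{\Dir_D}^\alpha)$ for every $\alpha$, and $\Ck[c]{\infty}(\cV)$ is already a core for $\Dir_D$ (by hypothesis, $k=1$ case), hence for every polynomial in $\Dir_D$; one then truncates spectrally — replacing $u \in \dom(\modulus{\Dir_D}^\alpha)$ by $E([-R,R])u =: u_R$, which converges to $u$ in the $\modulus{\Dir_D}^\alpha$-graph norm as $R \to \infty$ by dominated convergence against the spectral measure — and approximates each $u_R$ by elements of $\Ck[c]{\infty}(\cV)$ using that on the spectral subspace $\ran E([-R,R])$ the norms $\norm{\modulus{\Dir_D}^\alpha \cdot}$ and $\norm{\cdot}$ are comparable, so graph-norm approximation reduces to $\Lp{2}$-approximation, which holds since $\Ck[c]{\infty}(\cV)$ is dense in $\Lp{2}(\cV)$. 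A cleaner packaging: $\Ck[c]{\infty}(\cV)$ is invariant enough that the argument of "a core for a self-adjoint $T$ is a core for $f(T)$ when the core is built from vectors of polynomial growth" applies; this is essentially Chernoff's lemma / Nelson-type reasoning adapted to our setting.

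For the integer statement that $\Ck[c]{\infty}(\cV)$ is a core for $\Dir_D^k$, $k = 1, \dots, l$: we have $\overline{\Dir_c^k} = (\Dir^k)_s$ from the first paragraph, and separately $\Dir_D^k \subseteq (\Dir^k)_s$ with $\Dir_D^k$ symmetric on $\dom(\Dir_D^k) \supseteq \Ck[c]{\infty}(\cV)$; since $(\Dir^k)_s$ is self-adjoint and $\overline{\Dir_c^k}$ is self-adjoint and contained in it, we get $\overline{\Dir_c^k} = (\Dir^k)_s = \overline{\Dir_D^k}$ — the point being that $\Dir_D^k$ restricted to $\Ck[c]{\infty}(\cV)$ is exactly $\Dir_c^k$, whose closure is already all of $(\Dir^k)_s$, which contains $\Dir_D^k$, forcing the closure of $\Dir_D^k$ to coincide and hence $\Ck[c]{\infty}(\cV)$ to be a core for $\Dir_D^k$. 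I expect the main obstacle to be the fractional case: making rigorous that $\Ck[c]{\infty}(\cV)$-vectors, which are not obviously spectrally localized, can be pushed into finite spectral windows while staying within (the closure of) $\Ck[c]{\infty}(\cV)$ in the $\modulus{\Dir_D}^\alpha$-graph norm — this requires either the mollified resolvent trick $u \mapsto (\iden + \varepsilon^2 \Dir_D^2)^{-N} u$ (which maps $\Ck[c]{\infty}$... but not back into $\Ck[c]{\infty}$, only into $\dom(\Dir_D^\infty)$) combined with a second density step, or an abstract core-transference lemma; getting the two-step approximation to close up cleanly, with uniform control, is where the care is needed, whereas the integer-power assertions are essentially bookkeeping with \thmref{Thm:FirstMain}.
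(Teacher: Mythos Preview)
Your argument has two structural gaps, and the paper's route differs in ways that close both.

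First, the circularity in your opening paragraph: you assert that essential self-adjointness of $\Dir^l$ on $\Ck[c]{\infty}(\cV)$ forces essential self-adjointness of $\Dir^j$ for all $1 \le j \le l$, citing a ``sibling result in \secref{S:HtoL}''. But that is precisely the integer-power content of \thmref{Thm:FirstMain2} itself, so invoking it is circular; and your alternative of applying \thmref{Thm:FirstMain} to each $\Dir^j$ requires \ref{D:First}--\ref{D:Last} for $\Dir^j$, which is not assumed here (only the first power is). What the paper actually establishes as a preliminary (\lemref{Lem:HtoF}) is only the case $j=1$: if $(\Dir^l)_D$ is self-adjoint then $\Dir_D$ is self-adjoint. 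This step is nontrivial --- it uses Bernau's lemma to handle $l=2^m$ and a deficiency-index argument for odd $l$ --- and you omit both ingredients. Once $\Dir_D$ is self-adjoint you do get $(\Dir^l)_D = \Dir_D^l$ (both self-adjoint, one contained in the other), but you cannot deduce the intermediate equalities $(\Dir^j)_D = \Dir_D^j$ this way, since $(\Dir^j)_D$ is not yet known to be self-adjoint.

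Second, the paper's order is inverted from yours: it obtains the integer powers \emph{from} the fractional ones rather than alongside them. From $\Ck[c]{\infty}(\cV)$ being a core for $\Dir_D^l$, hence for $\modulus{\Dir_D}^l$, the paper invokes an interpolation lemma (\lemref{Lem:Core1}: a core for a non-negative self-adjoint $T$ is a core for $T^\alpha$ for $\alpha \in [0,1]$, using $(\Hil,\dom(T))_{\alpha,2} = \dom(T^\alpha)$ together with $\norm{T^\alpha u} \lesssim \norm{(\iden+T)u}$ on $\dom(T)$) to conclude that $\Ck[c]{\infty}(\cV)$ is a core for $(\modulus{\Dir_D}^l)^\alpha = \modulus{\Dir_D}^{l\alpha}$ for every $\alpha \in [0,1]$. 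Setting $\alpha = k/l$ and using $\dom(\modulus{\Dir_D}^k) = \dom(\Dir_D^k)$ with equal norms (\lemref{Lem:PowerMod}) then gives the integer case directly. This interpolation step is exactly what plugs the hole you yourself flag in your spectral-truncation sketch: the key inequality $\norm{T^\alpha u} \lesssim \norm{u} + \norm{Tu}$ (with $T = \modulus{\Dir_D}^l$) means that a $\Ck[c]{\infty}$ approximant in the $\Dir_D^l$-graph norm is automatically close in the $\modulus{\Dir_D}^{l\alpha}$-graph norm, so no spectral localisation of the approximants is needed. Your route could be salvaged with this observation, but as written it does not close.
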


\begin{remark} 
This result requires very little geometric properties:
the theorem is phrased for rough metrics which may not, 
a priori, even induce a length structure.
In particular, it does not require completeness.
  
Moreover, note that we only require
the first power of $\Dir$ to satisfy \ref{D:First}-\ref{D:Last}
in this theorem. 
\end{remark}

The second theorem we present is in the case 
that the rough metric induces a length structure. 
By this, we mean that  
$$\met(x,y) = \inf \set{\len(\gamma): \gamma \text{ is an absolutely continuous curve between $x$ and $y$}},$$
where $\len(\gamma)= \int_{I} \modulus{\dot{\gamma}(x)}_{\mg(x)}$,
is a distance metric which yields
a topology that agrees with the topology on $\cM$. 
The quintessential nontrivial example of such a situation is the pullback metric
$\mg = \pullb{\phi}\mgt$, where $\mgt$ is a smooth metric and
$\phi$ is a lipeomorphism. The metric $\mg$, in general, only 
has measurable coefficients, and $\phi$ can be seen as a bi-Lipschitz
transformation of $\mgt$.

\begin{theorem}
\label{Thm:SecondMain}
Let $\cV$ be a vector bundle with a bundle rough metric $\mh$, over
a manifold $\cM$ with a rough metric $\mg$ that induces
a complete length space.
If $\Dir$ is a first-order operator satisfying \ref{D:First}-\ref{D:Last},
then $\Dir$ on $\Ck[c]{\infty}(\cV)$ is essentially self-adjoint.
If $\Dir$ has $\Ck{m}$ coefficients
and $\Dir^k$ satisfies \ref{D:First}-\ref{D:Last}
for $1 \leq k \leq m+1$, then  
$\Dir^k$ on $\Ck[c]{\infty}(\cV)$ is essentially
self-adjoint for $1 \leq k \leq m+1$.
\end{theorem}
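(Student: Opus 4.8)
The plan is to reduce everything to verifying the negligible boundary criterion \eqref{lneg} for each power $\Dir^k$, $1 \leq k \leq m+1$, since by \thmref{Thm:FirstMain} (which applies once \ref{D:First}--\ref{D:Last} hold for $\Dir^k$) the condition \eqref{lneg} is equivalent to essential self-adjointness of $\Dir^k_c$. So the real content is: \emph{completeness of the length space $(\cM,\met)$, together with the first-order symmetry axioms, forces $\eqref{lneg}$ for all admissible powers}. I would first treat the base case $k=1$, then bootstrap.

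For $k=1$, the strategy is the classical Gaffney-type cutoff argument adapted to the rough setting. Since $(\cM,\met)$ is a complete length space and balls are precompact (Hopf--Rinow for length spaces), I can build a sequence of Lipschitz cutoff functions $\eta_j \in \Ck[c]{0,1}(\cM)$ with $0 \leq \eta_j \leq 1$, $\eta_j \to 1$ locally uniformly, and $\modulus{\conn \eta_j}$ uniformly bounded (in fact $\to 0$ is not needed — bounded suffices when combined with dominated convergence, but the standard trick is to take $\eta_j(x) = \chi(\met(x,o)/j)$ for a fixed basepoint $o$ and a fixed Lipschitz profile $\chi$, giving $\norm{\conn\eta_j}_\infty \lesssim 1/j$). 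Here I use that $\met$ is $1$-Lipschitz with respect to itself and that the rough metric controls $\modulus{\conn(\met(\cdot,o))}$ almost everywhere. Given $u,v \in \Sp_1(\Dir)$, I write
\begin{equation*}
\inprod{\Dir u, \eta_j v} - \inprod{\eta_j u, \Dir v} = \inprod{\Mul_{\eta_j} u, v},
\end{equation*}
using the product/commutator structure; the left side converges to $\inprod{\Dir u,v} - \inprod{u,\Dir v}$ by dominated convergence (both $\Dir u, u \in \Lp{2}$ and likewise for $v$), while $\modulus{\inprod{\Mul_{\eta_j}u,v}} \leq \norm{\Mul_{\eta_j}u}\,\norm{v} \lesssim \norm{\conn\eta_j}_\infty \norm{u}\,\norm{v} \to 0$, using the pointwise bound $\modulus{\Mul_\eta(x)} \leq C_x\modulus{\conn\eta(x)}$ recorded after the definition of first-order operator. (A small technical point: $\eta_j v$ lies in $\Ck[c]{0,1}(\cV) \subset \dom(\Dir_D)$ by the mollification remark in the text, so the integration-by-parts identity is licit via $\inprod{\Dir_N u, \eta_j v} = \inprod{u, \Dir_D(\eta_j v)}$.) This gives $\eqref{lneg}$ for $l=1$, hence $\Dir_c$ is essentially self-adjoint.

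For the higher powers the plan is the bootstrapping procedure advertised in \S\ref{S:Ess}: assuming $\Dir$ has $\Ck{m}$ coefficients and $\Dir^k$ satisfies \ref{D:First}--\ref{D:Last}, I want $\eqref{lneg}$ for $\Dir^k$. The key auxiliary step — generalising Wolf's idea — is an \emph{a priori elliptic estimate}: for $u \in \Sp_k(\Dir)$ and a cutoff $\eta$, each intermediate power $\Dir^p u$ ($0 \leq p \leq k$) can be controlled locally in $\Lp{2}$ by $\norm{u}$ and $\norm{\Dir^k u}$, uniformly along the cutoff exhaustion, using \ref{D:Reg} (the $\Lp{2}$-ellipticity axiom, giving $u \in \Sob[loc]{k,2}$) plus interpolation of Sobolev norms on precompact sets. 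Granting this, I again insert the cutoffs: writing $\Dir^k = \Dir \comp \Dir^{k-1}$ and commuting $\eta_j$ through one factor at a time, the error terms are sums of inner products of the form $\inprod{\Mul_{\eta_j}(\Dir^p \cdot), \Dir^q \cdot}$ with $p+q < k$, each estimated by $\norm{\conn\eta_j}_\infty$ times a product of the (now uniformly bounded, by the elliptic estimate and a fixed slightly-larger exhaustion) local norms of intermediate derivatives of $u$ and $v$. Letting $j\to\infty$ kills all error terms and yields $\eqref{lneg}$ for $\Dir^k$. Then \thmref{Thm:FirstMain} applied to $\Dir^k$ gives essential self-adjointness of $\Dir^k_c$. (Alternatively, once $\Dir^{m+1}$ is shown essentially self-adjoint, \thmref{Thm:FirstMain2} delivers essential self-adjointness of all lower powers for free — so it suffices to handle the single top power $k=m+1$, which streamlines the induction.)

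The main obstacle I anticipate is precisely the uniform a priori elliptic estimate for intermediate powers: one needs the local $\Sob[loc]{p,2}$-norm of $u$ on a ball to be bounded by $\norm{u}_{\Lp{2}} + \norm{\Dir^k u}_{\Lp{2}}$ over a \emph{slightly enlarged but still fixed} ball, with constants independent of the exhaustion index — and this must be extracted purely from the weak ellipticity axiom \ref{D:Reg} and the $\Ck{m}$-regularity of the coefficients, without the smooth elliptic machinery (pseudodifferential parametrices, finite propagation speed) used by Cordes and Chernoff. The delicate part is that $\Dir^k u$ is a priori only distributional, so the estimate must be phrased and proved through a careful localisation-and-mollification argument, chaining the single-step regularity $\Dir w \in \Lp[loc]{2} \Rightarrow w \in \Sob[loc]{1,2}$ up the tower while tracking the shrinking of the domain at each step and absorbing lower-order commutator contributions — this is the step where the low regularity genuinely bites and where most of the work in \S\ref{S:Ess} will go.
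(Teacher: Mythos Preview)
Your $k=1$ argument is correct and is exactly the paper's Theorem~\ref{D_essen_self}: Lipschitz cutoffs built from the distance to a basepoint, Proposition~\ref{lip_sym} to justify the integration by parts against $\eta_j v\in\Ck[c]{0,1}(\cV)$, and the commutator term vanishing in the limit.

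For $k\geq 2$ your plan diverges from the paper in a way that matters. The paper does \emph{not} commute a cutoff through $\Dir^k$ and estimate the resulting tower of commutators. Instead it proves, by induction on $k$, the global interpolation inequality
\[
\|\Dir^k u\|^2 \ \lesssim\ \|\Dir^{k+1}u\|^2 + \|u\|^2 \qquad (u\in\Sp_{k+1}),
\]
which gives the nesting $\Sp_{k+1}\subseteq\Sp_k\subseteq\cdots\subseteq\Sp_1$. Once that is in hand, \eqref{lneg} for $\Dir^k$ is immediate from the already-proved \eqref{lneg} for $\Dir$: for $u,v\in\Sp_k\subseteq\Sp_1$ one simply moves one factor of $\Dir$ across at a time, $\inprod{\Dir^k u,v}=\inprod{\Dir^{k-1}u,\Dir v}=\cdots=\inprod{u,\Dir^k v}$. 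No second cutoff argument is needed at level $k$.

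The substance is therefore entirely in the interpolation estimate, and here the paper uses a specific device you do not mention: the Wolf/Andreotti--Vesentini weighted-cutoff trick. One estimates $\|b_r^k\Dir^k u\|^2_{B_{2e(r)r}}$ by writing it as $\inprod{b_r^{2k}\Dir^k u,\Dir^k u}$ and integrating by parts \emph{once} (via Proposition~\ref{lip_sym}); the point of the power $b_r^{2k}$ is that the commutator $\Mul_{b_r^{2k}}$ produces a factor $b_r^{2k-1}$, which splits as $b_r^k\cdot b_r^{k-1}$, so after Cauchy--Schwarz one gets back $\|b_r^k\Dir^k u\|^2$ (absorbable) and $\|b_r^{k-1}\Dir^{k-1}u\|^2$ (handled by the inductive hypothesis). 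The constants are tracked carefully (Lemma~\ref{constants}) so that the inequality survives as $r\to\infty$. Your ``chaining single-step regularity with shrinking domain'' sketch does not do this: axiom \ref{D:Reg} only yields \emph{local} Sobolev bounds with constants depending on the compact set, and the support of $\conn\eta_j$ runs off to infinity, so there is no mechanism to make those constants uniform in $j$. The matched-power weight is precisely what converts the local control into a global $\Lp{2}$ bound.

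There is also a regularity issue with your endgame: commuting a Lipschitz $\eta_j$ through $\Dir^k$ for $k\geq 2$ requires making sense of $\Dir^k(\eta_j v)$, but $\eta_j v$ is only Lipschitz and $\Dir(\eta_j v)$ is merely $\Lp[loc]{\infty}$, so further applications of $\Dir$ are only distributional and the commutator expansion is not classically justified. The paper avoids this entirely because only a \emph{single} integration by parts (against a Lipschitz compactly supported section) is ever used at each inductive step, which is exactly what Proposition~\ref{lip_sym} licenses.
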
 

The proof of this theorem can be found in \S\ref{S:Ess-k}. 

\begin{remark}
In \cite{Wolf}, Wolf obtains essential
self-adjointness for  $\Dir$ and $\Dir^2$
when $\Dir$ is the Dirac operator for \emph{smooth} 
complete metrics. Following from this, Chernoff in \cite{Chernoff} 
uses wave techniques to obtain such a result for 
all powers $\Dir^k$ of first-order differential operators
satisfying a velocity of propagation condition.  
 The  wave techniques crucially rely on smoothness. 

In our theorem, we only assume completeness and 
allow for
singularities at the level of the metric. 
 We  keep  
track of the way in which the coefficients of our
differential operator affects the density of $\Ck[c]{\infty}(\cV)$
for higher powers.  
\end{remark}
\section{Applications}
\label{S:App}

\subsection{Dirac operators on Dirac bundles}
\label{Sec:App:Dirac}

Fix a  $\Ck{0,1}$ metric $\mg$ on $\cM$.
By the usual formula for the Christoffel symbols, 
we define the 
Levi-Civita connection 
$\conn:\Ck{0,1}(\tanb\cM) \to \Lp[loc]{\infty}(\cotanb \cM\tensor \tanb\cM)$. 

Let $\Cliff\cM$ denote the Clifford algebra,
which is the exterior algebra $\Forms\cM$
equipped with the Clifford product $\cliff$
given by 
$ \alpha \cliff \beta = \alpha \wedge \beta + \alpha  \cut \beta.$
By the symbol $\conn$, we denote the canonical 
lift of the Levi-Civita connection to the bundle
$\Cliff\cM$.

Following the notation of \cite{ML}, we say that
a complex bundle $\cS$ is a Spin bundle over $(\cM,\mg)$
if:
\begin{enumerate}[(i)]
\item it is a bundle of left-modules over $\Cliff\cM$
	with $\mdot: \Cliff\cM \times \cS \to \cS$,
\item it is equipped with a hermitian $\Ck{0,1}$-metric 
and a compatible connection $\conn:\Ck{0,1}(\cS) \to \Lp[loc]{\infty}(\cotanb\cM \tensor \cS)$ 
satisfying:
\begin{equation}
\label{App:D}
\tag{S}
\mh_x(e \rep \sigma,  e \rep \gamma) = \mh_x (\sigma, \gamma),
\ \text{and}\ 
\conn_v (\eta \rep u) = (\conn_v \eta) \rep u + \eta \rep \conn_v \psi
\end{equation}
for all $\modulus{e} = 1$ in $\cotanb_x\cM$, $v \in \tanb\cM$
and  for almost-every $x \in \cM$. 
\end{enumerate}  
The first property says that $\rep$ is unitary 
and the second property says that $\conn$
is a module derivation. 

Fixing a frame $\set{e_i}$ for $\tanb\cM$, recall that 
the Dirac operator $\Dir$ is defined by 
$$\Dir \sigma = e^i \rep \conn[e_i]\sigma.$$
It is easy to check that this is a first-order
differential operator, $\Dir:\Ck{0,1}(\cS) \to \Lp[loc]{\infty}(\cS)$
as a consequence of the module derivation property 
in \eqref{App:D}.

We now prove that $\Dir$ satisfies \ref{D:Close}. But first,
we present the following lemma that will be the primary tool
used  in its proof. 

\begin{lemma}
\label{Lem:DivLip}
Let $f \in \Ck[c]{0,1}(\cotanb\cM )$, a compactly supported Lipschitz co-vectorfield.
Then,
$$ \int_{\cM} \divv_\mg f\ d\mu_\mg = 0.$$
\end{lemma}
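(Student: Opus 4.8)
The plan is to reduce the global statement to a local one via a partition of unity, and then in each chart to reduce the Lipschitz case to the smooth case by mollification. First I would cover $\spt f$ by finitely many locally comparable coordinate charts $(U_j,\phi_j)$, $j = 1,\dots,M$, and fix a smooth partition of unity $\set{\eta_j}$ subordinate to this cover with $\sum_j \eta_j \equiv 1$ on a neighbourhood of $\spt f$. Writing $f = \sum_j \eta_j f$ and using the Leibniz rule $\divv_\mg(\eta_j f) = \eta_j \divv_\mg f + \inprod{\extd \eta_j, f}_\mg$ (valid almost everywhere since $\eta_j$ is smooth and $f$ is Lipschitz), together with $\sum_j \extd \eta_j = 0$ on $\spt f$, it suffices to prove $\int_{\cM} \divv_\mg(\eta_j f)\, d\mu_\mg = 0$ for each $j$; i.e. we may assume $f$ is compactly supported inside a single locally comparable chart $(U,\phi)$.

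In such a chart, $d\mu_\mg = \sqrt{\det \mg_{ik}}\, d\Leb$ and, in coordinates $x = (x^1,\dots,x^n)$, the divergence has the standard expression $\divv_\mg f = \frac{1}{\sqrt{\det\mg}}\,\partial_i\bigl(\sqrt{\det\mg}\; \mg^{ik} f_k\bigr)$, interpreted distributionally since the $\mg_{ik}$ are only $\Ck{0,1}$ and $f_k$ are Lipschitz. Hence
$$\int_{\cM} \divv_\mg f\ d\mu_\mg = \int_{\phi(U)} \partial_i\bigl(\sqrt{\det\mg}\; \mg^{ik} f_k\bigr)\ d\Leb.$$
Now set $V^i = \sqrt{\det\mg}\; \mg^{ik} f_k$. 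Since $\mg$ is $\Ck{0,1}$ and bounded above and below on the compact set $\phi(\spt\eta_j f)$, the product $V^i$ is a compactly supported Lipschitz (hence $W^{1,1}$) vector field on $\R^n$. The claim then follows from the classical fact that $\int_{\R^n} \partial_i V^i\, d\Leb = 0$ for $V \in \Ck[c]{0,1}(\R^n,\R^n)$, which in turn is proved by mollifying: take $V_\smallc = V \convolve \psi_\smallc$ with a standard mollifier, apply the divergence theorem (or simply Fubini and the fundamental theorem of calculus) to the smooth compactly supported $V_\smallc$ to get $\int \partial_i V_\smallc^i\, d\Leb = 0$, and let $\smallc \to 0$ using $\partial_i V_\smallc^i = (\partial_i V^i)\convolve \psi_\smallc \to \partial_i V^i$ in $\Lp{1}$ (the distributional derivative $\partial_i V^i$ is in $\Lp{\infty}$ with compact support, hence in $\Lp{1}$).

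The main obstacle, such as it is, is purely bookkeeping: one must be careful that all the products appearing ($\sqrt{\det\mg}$, $\mg^{ik}$, $f_k$, and the cutoff $\eta_j$) stay in a function class closed under multiplication and for which the Leibniz rule for distributional derivatives is valid — Lipschitz functions with compact support work because $\Ck{0,1}_c \cdot \Ck{0,1}_c \subset \Ck{0,1}_c \subset W^{1,1}_c$, and the product rule holds a.e. by Rademacher's theorem. One should also check that the coordinate expression for $\divv_\mg$ used here is consistent with whatever definition is in force earlier in the paper (it is the one dual to the gradient with respect to $\mu_\mg$), but no completeness or global geometric input is needed, and the argument is entirely local.
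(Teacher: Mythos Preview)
Your proof is correct, but it follows a different route from the paper's. The paper does not localise to charts; instead it invokes the existence of a \emph{smooth} metric $\mgt$ that is globally $C$-close to the $\Ck{0,1}$ metric $\mg$, writes $\mg(u,v) = \mgt(Eu,v)$ for a Lipschitz $(1,1)$-tensor $E$, and uses the identity $\divv_\mg f = \theta^{-1}\divv_{\mgt}(\theta E f)$ with $\theta = \sqrt{\det E}$ to transform the integral into $\int_\cM \divv_{\mgt}(\theta E f)\,d\mu_{\mgt}$. This reduces the question to the vanishing of the divergence integral of a compactly supported Lipschitz field with respect to a \emph{smooth} metric, which the authors then outsource to an external reference. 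Your argument is more self-contained and elementary: it avoids the auxiliary smooth metric and the cited lemma entirely, at the cost of the partition-of-unity bookkeeping and the explicit coordinate computation you flag. The paper's approach, on the other hand, fits the broader philosophy of the article (comparing rough metrics to nearby smooth ones) and would generalise more readily to situations where the coordinate formula for $\divv_\mg$ is less transparent.
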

\begin{proof}
Since the metric $\mg$ is at least continuous, we note as in 
Proposition 13 of \cite{BRough} that, given any $C > 1$,
there is a smooth metric $\mgt$ which is $C$-close to $\mg$, 
by which we mean that:
$ C^{-1}\modulus{u}_{\mgt} \leq \modulus{u}_{\mg} \leq C \modulus{u}_{\mgt}$
for every $x \in \cM$ and $u \in \tanb_x\cM$.

Note that there exists an $E \in \Ck{0,1}(\Tensors[1,1]\cM)$ symmetric such that 
$\mg(u,v) = \mgt(Eu,v)$, and on letting $\theta = \sqrt{\det E}$, 
we have that $\mu_\mg = \theta \mu_{\mgt}$.
Also, $E$ is bounded with respect to both metrics
$\mg$ and $\mgt$. Computing in $\Lp{2}$ allows us to assert
that $\divv_\mg f = \theta^{-1} \divv_{\mgt}( \theta E f)$
(c.f. Proposition 12 in  \cite{BRough}).
Therefore, 
$$\int_{\cM} \divv_\mg f\ d\mu_\mg  
	= \int_{\cM} \theta^{-1} \divv_{\mgt}(\theta E f)\ d\mu_\mg
	= \int_{\cM} \divv_{\mgt}(\theta E f)\ d\mu_{\mgt}.$$
But since the metric $\mgt$ is smooth, $\theta E \in \Ck{0,1}(\Tensors[1,1]\cM)$, 
$f \in \Ck[c]{0,1}(\cotanb\cM)$, and therefore $\theta E f \in \Ck[c]{0,1}(\cotanb\cM)$. 
Thus, by Lemma 7.113 in \cite{RF}, we obtain that
$\int_{\cM} \divv_{\mgt} (\theta E f)\ d\mu_{\mgt} = 0$.
\end{proof}

With this, we prove the following.
\begin{proposition}
Whenever $\sigma,\ \gamma \in \Ck[c]{\infty}(\cS)$, we have that
$\inprod{\Dir \sigma, \gamma} = \inprod{\sigma, \Dir \gamma}$.
Moreover, $\Dir$ satisfies \ref{D:Close}.
\end{proposition}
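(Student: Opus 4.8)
The plan is to prove the two assertions in tandem, since the first (symmetry of $\Dir$ on $\Ck[c]{\infty}(\cS)$) is essentially a special case of the argument needed for the second (validity of \ref{D:Close}, i.e.\ $\inprod{\Dir u, v} = \inprod{u, \Dir v}$ for $u \in \Sp_1(\Dir)$ and $v \in \Ck[c]{\infty}(\cS)$). The natural strategy is a \emph{divergence identity}: given sections $\sigma$ and $\gamma$ with $\gamma$ compactly supported, one forms the compactly supported Lipschitz co-vectorfield $f$ whose action on a vectorfield $X$ is $f(X) = \mh_x(X^\flat \rep \sigma, \gamma)$ — equivalently $f = \sum_i \mh(\cdot,e_i)\,\mh(e^i \rep \sigma, \gamma)$ built from a local frame — and computes $\divv_\mg f$ pointwise. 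Using the compatibility conditions in \eqref{App:D}, namely that $\rep$ is unitary and $\conn$ is a module derivation, together with the fact that $e^i \rep$ is (fibrewise) skew-adjoint with respect to $\mh$ (which follows from unitarity of Clifford multiplication by unit covectors, or can be arranged in an orthonormal frame), one gets the Leibniz-type identity
\[
\divv_\mg f = \inprod{\Dir\sigma, \gamma}_x - \inprod{\sigma, \Dir\gamma}_x
\]
pointwise almost everywhere, where the right-hand side denotes the fibre inner products. Integrating over $\cM$ and invoking Lemma~\ref{Lem:DivLip} (which applies precisely because $f \in \Ck[c]{0,1}(\cotanb\cM)$) gives $\inprod{\Dir\sigma,\gamma} = \inprod{\sigma,\Dir\gamma}$, which is the first claim when $\sigma \in \Ck[c]{\infty}(\cS)$.

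For \ref{D:Close} one must extend this from $\sigma \in \Ck[c]{\infty}(\cS)$ to $\sigma = u \in \Sp_1(\Dir)$, i.e.\ $u \in \Ck{\infty}(\cS) \cap \Lp{2}(\cS)$ with $\Dir u \in \Lp{2}(\cS)$. Here one localises: fix $v \in \Ck[c]{\infty}(\cS)$, choose a precompact open $V \supset \spt v$, and a cutoff $\chi \in \Ck[c]{\infty}(\cM)$ with $\chi \equiv 1$ on a neighbourhood of $\spt v$ and $\spt\chi \subset V$. Since the identity is local and $\chi u \in \Ck[c]{0,1}(\cS)$ (products of a smooth section with a compactly supported smooth function, so the regularity is at least Lipschitz and the support is compact), the divergence computation above still goes through with $\sigma$ replaced by $\chi u$: the vectorfield $f = \mh(\cdot\,\rep(\chi u), v)$ is compactly supported Lipschitz, and $\Dir(\chi u) = \chi \Dir u + \Mul_\chi u$ with $\Mul_\chi$ the bounded multiplication operator. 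Because $\chi \equiv 1$ and $\Mul_\chi = 0$ on $\spt v$, one has $\inprod{\Dir u, v} = \inprod{\chi \Dir u, v} = \inprod{\Dir(\chi u), v}$ and similarly $\inprod{u, \Dir v} = \inprod{\chi u, \Dir v}$, so the compactly supported case applies to $\chi u$ and $v$ and yields the desired identity.

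The main obstacle is the pointwise divergence identity itself under the low regularity: one needs $\divv_\mg f$ to be computed correctly for a metric $\mg$ that is only $\Ck{0,1}$, so the Christoffel symbols are merely $\Lp[loc]{\infty}$ and the Levi-Civita connection has locally bounded measurable coefficients. The algebra — unitarity of $e^i\rep$, the module-derivation Leibniz rule, and the metric-compatibility of $\conn$ — is purely fibrewise and holds almost everywhere by \eqref{App:D}, so it survives; what requires care is verifying that the classical formula $\divv_\mg f = \mu_\mg^{-1}\partial_i(\mu_\mg f^i)$ is the right object to which Lemma~\ref{Lem:DivLip} applies, and that no boundary terms are hidden. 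This is handled exactly as in Lemma~\ref{Lem:DivLip}: one passes to a smooth $C$-close metric $\mgt$, rewrites $\divv_\mg f$ in terms of $\divv_{\mgt}$ of a Lipschitz co-vectorfield (absorbing the conformal/tensorial factors $\theta$ and $E$, which are $\Ck{0,1}$ since $\mg$ is), and then the smooth-metric integration-by-parts (Lemma 7.113 of \cite{RF}) applies. Everything else is bookkeeping with frames and partitions of unity.
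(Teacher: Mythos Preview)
Your proposal is correct and follows essentially the same route as the paper: form the compactly supported Lipschitz (co)vectorfield $f$ built from $\mh(\,\cdot\,\rep\sigma,\gamma)$, verify the pointwise divergence identity $\divv_\mg f = \mh(\Dir\sigma,\gamma) - \mh(\sigma,\Dir\gamma)$ using unitarity of Clifford multiplication together with the module-derivation and metric-compatibility properties in \eqref{App:D}, and then invoke Lemma~\ref{Lem:DivLip}. For \ref{D:Close} the paper simply cites the cutoff/partition-of-unity reduction already recorded in \S\ref{S:MainRes}, which is exactly your $\chi u$ argument (note that $\chi u$ is in fact $\Ck[c]{\infty}(\cS)$, not merely Lipschitz, so you can apply the first part directly rather than re-running the divergence computation).
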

\begin{proof}
Fix a point $x \in \cM$ where $\mg$ and $\mh$ are differentiable
and fix a smooth frame $\set{e_j}$ near $x$.
Let $\conn[e_j] e_k = C_{jk}^m e_m$ and note that
by the lifting of $\conn$ to $\cotanb\cM$, we have that
$\conn[e_j]e^k = - C_{ji}^k e^i.$ 
Using metric compatibility, we obtain 
\begin{align*}
\mh(\Dir \sigma, \gamma)
	= \mh(e^j \rep \conn[e_j] \sigma, \gamma) 
	&= - \mh(\conn[e_j] \sigma, e^j \rep \gamma) \\
	&= \mh(\sigma, \conn[e_j] (e^j \rep \gamma)) - e_j\mh(\sigma, e^j \rep \gamma)  \\
	&= -C_{jm}^k \mh(\sigma, e^m \rep \gamma) \delta^j_k - e_j\mh(\sigma, e^j \rep \gamma) + \mh(\sigma, \Dir \gamma).
\end{align*}

Now, note that for $w = w_i e^i \in \Ck{0,1}(\tanb\cM)$,
$$-\divv w = \tr\conn w = e_j w^j + w^j \tr(e^j \tensor \conn[e_j]e_i)
	= e_j w^j + w^i C_{ij}^k \delta^j_k.$$
Define the vectorfield $W_{\sigma,\gamma}(x) = \mh_x(\sigma(x), e^k(x) \gamma(x)) e_k(x)$,
extended to the whole of $\cM$ by zero outside of $\spt \sigma \intersect \spt \gamma$. 
It is easy to see that this is invariantly defined and that 
it is $\Ck{0,1}$. Thus, from combining these calculations, we see that
$\mh(\Dir\sigma, \gamma) = \divv W_{\sigma,\gamma} + \mh(\sigma, \Dir \gamma)$
at points of differentiability.
Thus, by invoking Lemma \ref{Lem:DivLip}, we obtain that
$$ \int_{\cM} \divv W_{\sigma,\gamma}\ d\mu_\mg = 0,$$
and therefore, $\Dir$ is symmetric on $\Ck[c]{\infty}(\cS)$.
The proof of \ref{D:Close} follows from this as noted
in \S\ref{S:MainRes}.  
\end{proof} 
\begin{remark}
Observe that unlike the smooth case, we cannot assume 
that we can solve for a frame in which $(\conn[e_j]e_i)\rest{x} = 0$,
since the  metrics $\mg$ and $\mh$  are merely $\Ck{0,1}$. 
\end{remark}

Next, fix $v \in \tanb_x\cM$ and $u \in \cS_x$.
Let $\eta: \cM \to \R$ be a compactly supported
smooth function such that 
$\extd \eta(x) = v$. Since 
$\Dir$ is a first-order differential operator, 
the symbol of $\Dir$ at $(x,v)$ is: 
$$\Sym_{\Dir}(x,v)u 
	= \Mul(\eta)u 
	= \Dir(\eta u) - \eta \Dir u
	= \extd \eta \rep u
	= v \rep u.$$
 The critical regularity case
in the following theorem is
modelled on 
the proof of Proposition 3.4 in \cite{BMcR}.

\begin{proposition}
\label{Prop:DiracA2}
$\Dir$ satisfies \ref{D:Reg}. If the metrics $\mg$
and $\mh$ are $\Ck{m}$ for $m \geq 0$
and $\Dir$ is a $\Ck{m}$ coefficient operator, 
\ref{D:Reg} holds for $\Dir^l$ for $l \leq m + 1$.
\end{proposition}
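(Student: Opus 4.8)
The plan is to prove the first statement via an elliptic a priori estimate together with a Friedrichs mollification argument, and then to deduce the higher‑power statement by induction on $l$. For the first statement, since \ref{D:Reg} is local it suffices to show $\phi u \in \SobH[loc]{1}(\cS)$ for every $\phi \in \Ck[c]{\infty}(\cM)$ whenever $u \in \Lp{2}(\cS)$ and $\Dir u \in \Lp[loc]{2}(\cS)$; replacing $u$ by $\phi u$ and using $\Dir(\phi u) = \phi\,\Dir u + \Mul_\phi u \in \Lp{2}(\cS)$, which is compactly supported, I would reduce to $u, \Dir u \in \Lp{2}(\cS)$ supported in a single trivialising chart and work in $\Lp{2}(\R^n,\C^N)$. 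The structural input is that $\Dir$ is elliptic: from $\Sym_{\Dir}(x,v)u = v \rep u$ and the Clifford relation $v \cliff v = -\modulus{v}_{\mg(x)}^2$ one gets $\Sym_{\Dir}(x,v)^2 = -\modulus{v}_{\mg(x)}^2\,\iden$, so $\Sym_{\Dir}(x,v)$ is invertible for $v \neq 0$ with inverse bounded on compacta. Writing $\Dir = A^i\partial_i + B$ locally and integrating by parts for $w \in \Ck[c]{\infty}$ supported in the chart, ellipticity together with $A^i \in \Ck{0,1}$ (hence $\partial_j A^i \in \Lp[loc]{\infty}$) yields a Gårding‑type estimate $\norm{\conn w} \lesssim \norm{\Dir w} + \norm{w}$ with chart‑dependent constant, the error terms produced by the integration by parts being lower order and absorbed by the left‑hand side or by $\norm{w}$.

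To upgrade this a priori bound to the regularity of $u$ itself, I would apply it to mollifications $u_\epsilon = u \convolve \rho_\epsilon$ in the chart. The decisive tool is the Friedrichs commutator lemma: because the principal coefficients $A^i$ are Lipschitz, the commutators $\comm{\Dir,\,\rho_\epsilon\convolve\cdot}$ are bounded on $\Lp{2}$ uniformly in $\epsilon$ and converge strongly to $0$; hence $\Dir u_\epsilon = \rho_\epsilon\convolve \Dir u + \comm{\Dir,\,\rho_\epsilon\convolve\cdot}u$ stays bounded in $\Lp{2}$, the Gårding estimate makes $\set{\conn u_\epsilon}$ bounded in $\Lp{2}$, and a weak limit is identified as a distributional derivative of $u$, so $u \in \SobH[loc]{1}(\cS)$. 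This is exactly the scheme of Proposition 3.4 in \cite{BMcR}, to which I would refer for the details of the mollification estimate. Extracting the full $\SobH{1}$‑regularity from merely Lipschitz principal coefficients is sharp, and this is the step I expect to be the main obstacle: for genuinely $\Lp[loc]{\infty}$ coefficients the commutator control fails and the argument breaks down.

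For the higher‑power statement, $\mg,\mh$ are $\Ck{m}$ and $\Dir$ is a $\Ck{m}$‑coefficient operator, and I would induct on $l \leq m+1$, the base case $l=1$ being the first statement. Expanding $\Dir^l$ in a chart, a summand differentiating $u$ exactly $l-j$ times carries at most $j$ derivatives on the coefficients $A^i,B$ and so lies in $\Ck{m-j}$; since $l \leq m+1$ these are all at least continuous, and the principal part $A^{i_1}\cdots A^{i_l}\partial_{i_1}\cdots\partial_{i_l}$ has $\Ck{m}$ coefficients. Moreover $\Dir^l$ is elliptic of order $l$, as its symbol $\Sym_{\Dir}(x,v)^l$ equals, by the Clifford relation, $\pm\modulus{v}_{\mg(x)}^l\,\iden$ or $\pm\modulus{v}_{\mg(x)}^{l-1}(v\rep\cdot)$ according to the parity of $l$, and so is invertible for $v\neq 0$. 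Given $u \in \Lp{2}(\cS)$ with $\Dir^l u \in \Lp[loc]{2}(\cS)$, I would localise, set $w = \Dir^{l-1}u$ (a compactly supported distribution with $\Dir w = \Dir^l u \in \Lp{2}(\cS)$), apply first‑order elliptic regularity for $\Dir$ — legitimate since $m \geq l-1$ — to obtain $w \in \Lp[loc]{2}(\cS)$, and then invoke the inductive hypothesis for the exponent $l-1 \leq m$ to get $u \in \Sob[loc]{l-1,2}(\cS)$, whence $\Dir^j u \in \Sob[loc]{l-1-j,2}(\cS)$ for $0 \leq j \leq l-1$. A final iteration of first‑order elliptic regularity on the identities $\Dir\cbrac{\Dir^j u} = \Dir^{j+1}u$, taken in decreasing order of $j$, gains one derivative on $\Dir^j u$ at each stage and terminates with $u \in \Sob[loc]{l,2}(\cS)$; each application is valid because the coefficient regularity it requires never exceeds $\Ck{l-1} \subseteq \Ck{m}$. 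Alternatively, the conclusion follows at once from interior elliptic regularity for the order‑$l$ elliptic operator $\Dir^l$, whose coefficients are regular enough precisely because $l \leq m+1$.
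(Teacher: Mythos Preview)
Your treatment of the first statement is correct but follows a different route from the paper. You use a G\aa rding-type a priori estimate together with Friedrichs mollification, relying on the principal coefficients $A^i$ being Lipschitz (which they are, since Clifford multiplication in a local frame depends on the $\Ck{0,1}$ metric). The paper instead exploits a feature specific to Dirac operators: in a (Lipschitz) orthonormal frame for $\tanb\cM$ and $\cS$, the Clifford action matrices $A(e^i)$ are genuinely \emph{constant} inside the chart, so after stripping off the $\Lp[loc]{\infty}$ lower-order terms one commutes directly with the Fourier transform and reads off $\xi \rep \widehat{\sigma} \in \Lp{2}$, hence $\sigma \in \SobH{1}$. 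Your argument is the generic one for first-order elliptic operators with Lipschitz principal part; the paper's is shorter here and bypasses the commutator lemma entirely by reducing to a constant-coefficient symbol.

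For the higher-power statement your inductive scheme has a gap. You set $w = \Dir^{l-1}u$, which a priori is only a distribution, and then invoke ``first-order elliptic regularity for $\Dir$'' to deduce $w \in \Lp[loc]{2}$. But the base case you established requires the input already to lie in $\Lp{2}$; it does not cover $w \in \mathcal{D}'$ with $\Dir w \in \Lp{2}$. Closing the induction would need distributional elliptic regularity on negative Sobolev indices, a strictly stronger input than your base case --- and once you are willing to cite that, the induction is superfluous, since the same theory applied directly to the order-$l$ elliptic operator $\Dir^l$ gives the conclusion in one stroke. That is precisely what the paper does: it observes that $\Dir^l$ is elliptic of order $l$ with sufficiently regular coefficients (because $l \leq m+1$) and invokes classical interior regularity (Theorem 10.3 in \cite{Vargas}). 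Your ``alternatively'' clause does land on this, so you reach the right argument in the end; the inductive detour, however, does not stand on its own.
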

\begin{proof}
First, observe that any two \emph{smooth} connections
$\conn_1$ and $\conn_2$ are comparable on precompact
subsets in $\Lp{2}$-norm. 
Hence, to show \ref{D:Reg}, it suffices to show that 
$\Dir \sigma \in \Lp{2}(\cS)$ implies that $\sigma \in \Sob{1,2}(U, \cS)$
inside  precompact trivialisations $(U, \phi)$
corresponding to charts.
Also note that $\Dir u \in \Lp{2}(\cS)$ means exactly 
that
$\modulus{(\Dir \sigma)(\gamma)} = \modulus{\inprod{\sigma, \Dir \gamma}} \lesssim \norm{\gamma}$
for all  $\gamma \in \Ck[c]{\infty}(\cS)$.

Suppose that $\sigma \in \Lp{2}(\cS)$ with 
$\spt \sigma \subset U$ compact. Fix $\gamma \in \Ck[c]{\infty}(\cS)$
with $\spt \gamma \subset U$. Since we are inside a precompact chart
and $\mg$ is $\Ck{0,1}$, we obtain a constant $C \geq 1$
such that 
$$C^{-1}\modulus{u}_{\mg} \leq \modulus{u}_{\pullb{\phi}\delta}
	\leq C \modulus{u}_{\mg}$$
for all $x \in U$, where by $\delta$ we denote the
Euclidean metric. Moreover, there exists
a transformation $\B \in \Ck{0,1}(\Tensors[1,1]\cM)$
such that $\mg(u,v) = \pullb{\phi}\delta(\B u, v)$
and so that $d\mu_\mg = \uptheta d\pullb{\phi}\Leb$, 
where $\uptheta = \sqrt{\det \B}$ and $\pullb{\phi}\Leb$
is the pullback of the Lebesgue measure in $\phi(U)$
by $\phi$. Then, 
we obtain by the fact that $\Dir \sigma \in \Lp{2}(\cS)$
that $\modulus{\inprod{\sigma, \Dir \gamma}} \lesssim \norm{\gamma}$. 
Expanding this norm, taking an orthonormal frame $\set{e_i}$
for $\tanb\cM$ (note that this frame is Lipschitz) 
and $\set{s_j}$ for $\cS$, we have that 
$$ 
\modulus{\int_{U} \mh{(\sigma, e^i \rep (\conn[e_i] \gamma^k) s_k) }\ \uptheta d\Leb } \lesssim \norm{\gamma} $$
since $C_{ij}^k = e^k(\conn[e_i]s_j) \in \Lp{\infty}(U, \cS)$. 
Furthermore, $\conn[e_i]\gamma^k \uptheta = \conn[e_i](\theta \gamma^k) - (\conn[e_i]\theta)\gamma^k$, and
so again,  we have that 
$$
\modulus{\int_{U} \mh{\sigma, e^i \rep (\conn[e_i] \gamma^k) s_k) )}\ d\Leb } 
	\lesssim \norm{\gamma}_{\Lp{2}(U,\cS,\Leb)}.$$
 Since we fixed an orthonormal frame $s_k$,
the action $e^i \rep s_k = A(e^i) s_k$ for a matrix $A(e^i)$,
where the coefficients of  $A(e^i)$ are constant inside $U$.   
Viewing this integral on $\R^n$ and extending $\sigma$
to the whole of $\R^n$ by setting it to $0$ outside of
$\phi(U)$, we obtain for any $\gamma \in \Ck[c]{\infty}(\R^n, \C^N)$
(where $\C^N \cong \cS$ inside $U$),
$\modulus{\inprod{\sigma, \conn[e_i](\uptheta \gamma^k) e^i \rep s_k}} \lesssim \norm{\gamma}$.
Since the matrix $A(e^i)$ is constant coefficient, 
we have that $\widehat{e^i \rep \psi} = e^i \rep \widehat{\psi}$
for any $\psi \in \Lp{2}(\R^n,\C^N)$, where $\ \widehat{}\ $ denotes
the Fourier Transform.  Then,
$$
\inprod{\sigma, \conn[e_i](\uptheta \gamma^k) e^i \rep s_k} 
	= \inprod{\hat{\sigma}, e^i \rep \xi_i \widehat{ (\uptheta \gamma^k)}}.$$
This is valid for any $\gamma \in \Ck[c]{\infty}(\R^n, \C^N)$, therefore, 
$\xi \rep \widehat{\sigma} = \xi_i e^i \rep \widehat{\sigma} \in \Lp{2}(\R^n,\C^N)$.
Hence, $\sigma \in \Sob{1,2}(\R^n, \C^N)$ 
which implies that $\sigma \in \Sob[loc]{1,2}(U, \cS)$.

For a general $\sigma \in \Lp{2}(\cS)$ with $\Dir \sigma \in \Lp{2}(\cS)$, fix $\tilde{U}$ a
precompact chart and $U \Subset \tilde{U}$. 
There is a smooth partition of unity $\set{\rho_j}$ inside $\tilde{U}$
such that $\eta = \sum_{j=1}^M \rho_j = 1$ on $U$, and
we extend $\eta = 0$ outside $\tilde{U}$. Then,
for any $\gamma \in \Ck[c]{\infty}(\cS)$, 
$$\modulus{\Dir(\eta \sigma)(\gamma)} = \inprod{\sigma, \eta \Dir \gamma} 
	= \inprod{\sigma, \Dir(\eta \gamma)} - \inprod{\sigma, \extd \eta \rep \gamma}.$$
 Since $\Dir \sigma \in \Lp{2}(\cS)$ we have 
$\modulus{\inprod{\sigma, \Dir(\eta \gamma)}} \lesssim \norm{\eta \gamma} \leq \norm{\gamma}$
and it is easy to see, from the Cauchy-Schwarz inequality, that 
$\inprod{\sigma, \extd \eta \rep \gamma} \lesssim \norm{\sigma} \norm{\gamma}$
(where the constant contains $\sup \modulus{\extd \eta} \lesssim 1$). 
This proves that $\Dir(\eta \sigma) \in \Lp{2}(\cS)$, and since  
$\norm{\Dir \sigma}_{\Lp{2}(U, \cS)} \leq \norm{\Dir(\eta \sigma)}$, 
by running our previous argument with 
$\eta \sigma$ in place of $\sigma$, 
we obtain that $\sigma \in \Sob[loc]{1,2}(\cS)$ whenever $\Dir \sigma \in \Lp{2}(\cS)$.
The estimate in \ref{D:Reg} is immediate. This proves the critical regularity 
case.

For the case that the coefficients of the metric and
operator are $\Ck{m}$, and since 
$\Sym_{\Dir}(x,v)\sigma  = v \rep \sigma$
implies that $\Dir$ is an elliptic operator, 
\ref{D:Reg} follows from the classical Schauder interior regularity estimate. 
See Theorem 10.3 in \cite{Vargas}.
\end{proof}



%

\subsection{Application to Dirac type operators}


The study of Lipschitz metrics on smooth manifolds has, in recent decades, received a lot of attention
by various authors. One such study came from work of Chen and Hsu in \cite{CH}.
In that paper, Chen and Hsu considered a smooth complete Riemannian manifold with a Lipschitz metric, and
took up the study of gradients of certain solutions to Laplace's equation in this setting. As the metric
was only assumed to be Lipschitz, they looked at solutions of $\Sob[loc]{1,2}$ regularity, which they
then termed weakly harmonic. An assumption on the volume growth of geodesic balls, and a bound on the 
(distributional) Ricci curvature, led them to
obtain gradient estimates, generalising those of Yau in the smooth setting, for such weakly harmonic
functions. Their work represents a typical situation where one has to consider a differential operator in a setting
where the metric is not smooth, and where one wants to understand certain qualitative properties of solutions
of a PDE associated to the differential operator.



A natural question that arises from their work is, 
how properties, like being essentially self-adjoint, of a differential operator are affected in the Lipschitz 
setting. In this section we will give an application of our work to the case of the Hodge Dirac and
Atiyah Singer Dirac operators.



We will start with a general theorem about the Dirac operator, the notation being that which was used in
the previous section.

On combining the propositions from the previous section, we use
Theorem \ref{Thm:FirstMain} and  Theorem \ref{Thm:SecondMain}
to obtain the following theorem.
The metric $\mg$ appearing in this theorem
automatically induces a length space
as a consequence
of Proposition 4.1 in \cite{Burtscher} 
by Burtscher.

\begin{theorem}
The operator $\Dir$ on $\Ck[c]{\infty}(\cS)$ is essentially self-adjoint.
If the metric $\mg$ 
and $\mh$ are of class $\Ck{m}$ for $m \geq 0$, with $\mg$ complete, and $\Dir$ is a $\Ck{m}$
coefficient operator, then $\Dir^l$ on $\Ck[c]{\infty}(\cS)$ is 
essentially self-adjoint for $1 \leq l \leq m+1$. 
Moreover, $\Ck[c]{\infty}(\cS)$ is a core for $\modulus{\Dir_D}^\beta$ for $\beta \in [0, m+1]$.
\end{theorem}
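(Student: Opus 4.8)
The plan is to assemble this final theorem as a direct corollary of the two main structural theorems, \thmref{Thm:FirstMain} and \thmref{Thm:SecondMain}, combined with the propositions of \secref{Sec:App:Dirac}. First I would record that the Dirac operator $\Dir$ associated to a Spin bundle $\cS$ over $(\cM, \mg)$ with $\Ck{0,1}$ metrics $\mg, \mh$ satisfies \ref{D:First} (this is the proposition establishing symmetry on $\Ck[c]{\infty}(\cS)$, which yields \ref{D:Close}) and \ref{D:Reg} (this is \propref{Prop:DiracA2}, in the critical regularity case). Thus the single operator $\Dir$ meets the hypotheses \ref{D:First}--\ref{D:Last}. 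Next, I invoke the observation made in the statement: by Proposition 4.1 of \cite{Burtscher}, a $\Ck{0,1}$ metric $\mg$ induces a length space whose length-metric topology agrees with the manifold topology — so that the hypothesis of \thmref{Thm:SecondMain} concerning an induced length space is automatic here. Applying \thmref{Thm:SecondMain} to $\Dir$ then gives that $\Dir$ on $\Ck[c]{\infty}(\cS)$ is essentially self-adjoint, which is the first assertion.

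For the higher-power statement, I would additionally assume the metrics $\mg, \mh$ are of class $\Ck{m}$ with $m \geq 0$, $\mg$ complete, and $\Dir$ a $\Ck{m}$ coefficient operator. Under these hypotheses, \propref{Prop:DiracA2} in its second case shows that $\Dir^l$ satisfies \ref{D:Reg} for $l \leq m+1$; the symmetry \ref{D:Close} for $\Dir^k$, $1 \leq k \leq m+1$, follows from the partition-of-unity and mollification argument spelled out in \secref{S:MainRes} (using that $\Dir^p(u_i) \in \Ck[c]{m-p}(\cS) \subset \Ck[c]{0,1}(\cS)$ for $p \leq k-1$). Since a $\Ck{m}$ metric with $m \geq 1$ is in particular $\Ck{0,1}$ and hence length-inducing, and since we are now also assuming completeness, the second clause of \thmref{Thm:SecondMain} applies verbatim and yields that $\Dir^k$ on $\Ck[c]{\infty}(\cS)$ is essentially self-adjoint for $1 \leq k \leq m+1$.

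Finally, for the core statement for the fractional powers $\modulus{\Dir_D}^\beta$, I would feed the conclusion just obtained — that $\Dir^{m+1}$ on $\Ck[c]{\infty}(\cS)$ is essentially self-adjoint — into \thmref{Thm:FirstMain2}, taking $l = m+1$. That theorem (which requires only that the first power of $\Dir$ satisfies \ref{D:First}--\ref{D:Last}, already checked) then delivers that $\Ck[c]{\infty}(\cS)$ is a core for $\modulus{\Dir_D}^\beta$ for all $\beta \in [0, m+1]$, and in fact a core for $\Dir_D^k$ for $k = 1, \dots, m+1$. This completes the proof.

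The work here is essentially bookkeeping: the theorem is a packaging of earlier results, so the only genuine point to be careful about is verifying that the hypotheses of each cited theorem are met — in particular that the $\Ck{0,1}$ (or $\Ck{m}$) regularity of the metric, via \cite{Burtscher}, supplies the length-space hypothesis needed for \thmref{Thm:SecondMain}, and that the axiom \ref{D:Reg} is available for the right range of powers from \propref{Prop:DiracA2}. I do not anticipate a substantive obstacle beyond matching these hypotheses precisely and, for the higher-power case, ensuring the regularity $\Ck{m-p}$ of the intermediate sections $\Dir^p(u_i)$ stays within the range where the symmetry computation of \secref{S:MainRes} is valid.
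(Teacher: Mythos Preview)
Your proposal is correct and follows essentially the same route as the paper: verify \ref{D:First}--\ref{D:Last} for $\Dir$ (and its powers, via \propref{Prop:DiracA2}), invoke Burtscher to obtain the length-space hypothesis, and then apply \thmref{Thm:SecondMain} for essential self-adjointness and \thmref{Thm:FirstMain2} for the fractional-power core statement. The paper's own justification is terse (it names \thmref{Thm:FirstMain} and \thmref{Thm:SecondMain} and the Burtscher reference), and your write-up is in fact more explicit about which result handles the $\modulus{\Dir_D}^\beta$ core assertion.
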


We list two noteworthy consequences of this theorem - to the Hodge Dirac
operator and to the Atiyah Singer Dirac operator.
 We remind the reader that in the following corollaries,
a $\Ck{m}$ coefficient metric induces a $\Ck{m-1}$
coefficient connection, from which the
respective operators are built. 


\begin{corollary}
Let $\mg$ be a $\Ck{0,1}$ complete metric
on a smooth manifold $\cM$. Then,
the Hodge Dirac operator 
$\Dir = \extd + \intd$ on $\Ck[c]{\infty}(\Forms\cM)$, 
 where $\Forms\cM$ is the exterior-algebra, 
is essentially self-adjoint. 
If $\mg$ is $\Ck{m}$, $m \geq 1$,  then we get that
$\Dir^l$ on $\Ck[c]{\infty}(\Forms\cM)$ is 
essentially self-adjoint for $1 \leq l \leq m$.
\end{corollary}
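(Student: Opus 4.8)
The plan is to derive the corollary directly from the preceding general theorem about Dirac operators, after observing that the Hodge Dirac operator $\Dir = \extd + \intd$ on $\Forms\cM = \Cliff\cM$ is a Dirac operator in the sense set up in \secref{Sec:App:Dirac}. First I would recall that the exterior algebra $\Forms\cM$ carries the Clifford product $\alpha \cliff \beta = \alpha \wedge \beta + \alpha \cut \beta$, which makes it a bundle of left-modules over $\Cliff\cM$; the natural Hermitian metric induced by $\mg$ (extended to forms and then complexified) together with the Levi-Civita connection lifted to $\Forms\cM$ satisfies the compatibility conditions \eqref{App:D}, namely that Clifford multiplication by unit covectors is unitary and that $\conn$ is a module derivation. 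Both of these are classical identities for the Levi-Civita connection on forms; the only subtlety is that since $\mg$ is merely $\Ck{0,1}$, the connection has $\Lp[loc]{\infty}$ coefficients, so all these identities are asserted almost everywhere, exactly as in the framework of \secref{Sec:App:Dirac}. One then checks that $\extd + \intd = e^i \cliff \conn[e_i]$ in a local frame, which is the standard Weitzenböck-type identity; this identifies $\Dir = \extd + \intd$ with the Dirac operator $\Dir\sigma = e^i \rep \conn[e_i]\sigma$ associated to this Spin bundle structure.

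Once this identification is made, the first assertion — essential self-adjointness of $\extd + \intd$ on $\Ck[c]{\infty}(\Forms\cM)$ for a $\Ck{0,1}$ complete metric — is an immediate instance of the first sentence of the preceding theorem, since the $\Ck{0,1}$ metric induces a complete length space by the cited result of Burtscher. For the higher-power statement, I would invoke the second sentence of that theorem: if $\mg$ is $\Ck{m}$ with $m \geq 1$, then the Levi-Civita connection, and hence $\Dir = \extd + \intd$, has $\Ck{m-1}$ coefficients. Applying the theorem with $m-1$ in place of $m$ gives essential self-adjointness of $\Dir^l$ for $1 \leq l \leq (m-1)+1 = m$, which is precisely the claimed range. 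The shift by one in the regularity index — $\Ck{m}$ metric but only $\Ck{m-1}$ operator — is the reason the corollary reaches powers up to $m$ rather than $m+1$, and this is flagged in the remark preceding the corollary.

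The only genuine work, and the step I expect to be the main obstacle, is verifying carefully that the module-derivation and unitarity axioms \eqref{App:D} genuinely hold in the low-regularity setting for the lifted Levi-Civita connection on forms, and that the algebraic identity $\extd + \intd = e^i \cliff \conn[e_i]$ survives when the frame $\set{e_i}$ is only Lipschitz rather than smooth. In the smooth case these are textbook computations, but here one must be content with almost-everywhere statements and cannot normalise the connection coefficients to vanish at a point — the same caveat noted in the remark following the symmetry proposition in \secref{Sec:App:Dirac}. Concretely, I would fix a point of differentiability of $\mg$, work in a Lipschitz orthonormal coframe, and verify $\conn(\alpha \cliff \beta) = (\conn\alpha)\cliff\beta + \alpha\cliff(\conn\beta)$ pointwise there; since the set of such points has full measure, the identity holds in $\Lp[loc]{\infty}$. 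Everything else is a bookkeeping exercise matching hypotheses against the general theorem, together with the observation that $\Dir^k$ automatically satisfies \ref{D:First}--\ref{D:Last} for $\Ck{m}$-coefficient elliptic operators in the relevant range, as recorded in the remark after \ref{D:Last} and in Proposition~\ref{Prop:DiracA2}.
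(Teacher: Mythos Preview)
Your proposal is correct and follows exactly the approach the paper takes: the corollary is stated without proof as a direct consequence of the preceding general theorem on Dirac bundles, together with the remark immediately before it that a $\Ck{m}$ metric yields a $\Ck{m-1}$ connection (hence a $\Ck{m-1}$-coefficient operator). Your additional care in verifying that $\Forms\cM$ with Clifford multiplication and the lifted Levi-Civita connection satisfies \eqref{App:D} almost everywhere, and that $\extd + \intd = e^i \cliff \conn[e_i]$, simply spells out what the paper leaves implicit.
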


\begin{remark}
Gaffney in \cite{Gaffney} obtains this result for $\Dir^2$
on $\Ck{m}$ manifolds. Roelcke in \cite{Roelcke} obtains this theorem for $\Dir^2$ restricted
to functions, and Cordes \cite{Cordes} obtains this for $\Dir^m$ restricted
to functions when the metric is smooth. Gaffney 
in \cite{Gaffney} obtains it for all powers under
the smoothness assumption.

\end{remark}




\begin{corollary}[Theorem \ref{Thm:Main2}]
Let $\mg$ be a $\Ck{0,1}$ complete metric
on a smooth Spin manifold $\cM$, with a spin
structure $\Prin{\Spin}(\cM)$. Let 
$\Spinors \cM = \Prin{\Spin} \times_\eta \Spinors\R^n$, 
where $\eta:\Spin_n \to \bddlf(\Spinors \R^n)$ is the 
usual complex
irreducible representation (given by the nontrivial minimal irreducible complex representation in odd dimensions 
or the sum of the two nontrivial minimal irreducible complex representations in even dimensions). 
Then, the associated \emph{Atiyah Singer Dirac 
operator} $\spin{\Dir}$ is essentially self-adjoint on $\Ck[c]{\infty}(\cV)$.
If the metric is $\Ck{m}$ for $m \geq 1$, then $\spin{\Dir}^l$
is essentially self-adjoint on $\Ck[c]{\infty}(\cV)$ for $l \leq m$.
\end{corollary}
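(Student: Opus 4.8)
The plan is to realise the Atiyah--Singer Dirac operator $\spin{\Dir}$ as a Dirac operator on a Dirac bundle in the sense of \secref{Sec:App:Dirac}, and then simply invoke the general Dirac-bundle theorem proved just above. Concretely, I would first observe that the spinor bundle $\Spinors\cM = \Prin{\Spin}\times_\eta \Spinors\R^n$ carries a natural left module structure over the Clifford bundle $\Cliff\cM$: this is the fibrewise Clifford multiplication inherited from the representation $\eta$ on $\Spinors\R^n$, and it is well defined because $\Prin{\Spin}(\cM)$ is a reduction of the oriented orthonormal frame bundle of $(\cM,\mg)$ and $\eta$ intertwines the $\Spin_n$-action with Clifford multiplication. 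The only subtlety is regularity: since $\mg$ is merely $\Ck{0,1}$, the orthonormal coframe fields and transition functions are only Lipschitz, so the bundle metric on $\Spinors\cM$ is $\Ck{0,1}$ rather than smooth, and the Clifford module structure map $\rep:\Cliff\cM\times\Spinors\cM\to\Spinors\cM$ has $\Ck{0,1}$ coefficients — precisely the setting allowed in the definition of a Spin bundle in the excerpt.

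Next I would verify the two structural conditions in \eqref{App:D}. Unitarity of Clifford multiplication by unit covectors, $\mh_x(e\rep\sigma, e\rep\gamma) = \mh_x(\sigma,\gamma)$ for $\modulus{e}_x = 1$, is a pointwise algebraic fact about the representation $\eta$ (the standard Hermitian inner product on $\Spinors\R^n$ is $\Spin_n$-invariant and makes Clifford multiplication by unit vectors skew-Hermitian, hence unitary), which transfers fibrewise. The module-derivation identity $\conn_v(\eta\rep u) = (\conn_v\eta)\rep u + \eta\rep\conn_v u$ holds because the spinorial connection $\conn$ on $\Spinors\cM$ is by construction the lift of the Levi-Civita connection via $\eta_\ast$, and compatibility with Clifford multiplication is the defining property of that lift; the only thing to check is that these identities, which are classical for smooth metrics, persist almost everywhere when one differentiates the Lipschitz frame fields, which is routine since $\conn$ has $\Lp[loc]{\infty}$ coefficients and the Leibniz computation is algebraic at points of differentiability. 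Thus $\Spinors\cM$ is a Spin bundle over $(\cM,\mg)$ and $\spin{\Dir}\sigma = e^i\rep\conn[e_i]\sigma$ is exactly the Dirac operator of \secref{Sec:App:Dirac}.

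With that identification in place, the general theorem of the previous subsection applies verbatim. It gives that $\spin{\Dir}$ satisfies \ref{D:Close} (via the symmetry Proposition and \lemref{Lem:DivLip}) and \ref{D:Reg} (via \thmref{Prop:DiracA2}, using that the symbol $v\rep\,\cdot\,$ is invertible off the zero section, so $\spin{\Dir}$ is elliptic), so \thmref{Thm:FirstMain} and \thmref{Thm:SecondMain} apply; since the $\Ck{0,1}$ metric $\mg$ is complete, Proposition 4.1 of \cite{Burtscher} ensures $\mg$ induces a complete length space, and \thmref{Thm:SecondMain} yields essential self-adjointness of $\spin{\Dir}$ on $\Ck[c]{\infty}(\cV)$. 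When $\mg$ is $\Ck{m}$ with $m\geq 1$, the Levi-Civita connection has $\Ck{m-1}$ coefficients, so $\spin{\Dir}$ is a $\Ck{m-1}$ coefficient operator; ellipticity plus Schauder regularity gives \ref{D:Reg} for $\spin{\Dir}^l$ with $l\leq m$, and \thmref{Thm:SecondMain} then yields essential self-adjointness of $\spin{\Dir}^l$ on $\Ck[c]{\infty}(\cV)$ for $1\leq l\leq m$, as claimed.

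I expect the main obstacle to be purely bookkeeping rather than conceptual: one must be careful that the passage from the $\Spin$-principal-bundle description to the Clifford-module description does not silently import smoothness of the metric. In particular, the identity $\conn[e_j]e^k = -C^k_{ji}e^i$ and the computation of $\divv W_{\sigma,\gamma}$ in the symmetry proof use only that the frame is Lipschitz and $\conn$ is $\Lp[loc]{\infty}$, which is fine, but one should confirm that the hermitian metric on $\Spinors\cM$ is genuinely $\Ck{0,1}$ (it is, being built from the Lipschitz coframe and a constant Hermitian form on $\Spinors\R^n$) and that the representation-theoretic choice of $\eta$ — the irreducible complex representation, or the sum of the two half-spinor representations in even dimensions — does not affect any of the above, which it does not, since unitarity and the module-derivation property are representation-independent. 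Once these checks are dispatched, the corollary is an immediate specialisation of the general Dirac-bundle theorem.
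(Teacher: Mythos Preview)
Your proposal is correct and follows exactly the paper's approach: the corollary is presented without a separate proof, as an immediate specialisation of the general Dirac-bundle theorem, with the paper explicitly noting beforehand that a $\Ck{m}$ metric induces a $\Ck{m-1}$ connection (hence a $\Ck{m-1}$ coefficient operator). Your write-up simply supplies the routine verification that the spinor bundle $\Spinors\cM$ is a Spin bundle in the sense of \S\ref{Sec:App:Dirac}, which the paper takes as understood.
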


\begin{remark}
When the metric $\mg$ is smooth, Wolf in \cite{Wolf} 
obtains this for $\spin{\Dir}$ and $\spin{\Dir}^2$, and
Chernoff in \cite{Chernoff} obtains this for all
powers $\spin{\Dir}^m$.
The first part of this theorem, was also obtained
by Bandara, McIntosh and \Rosen\ in \cite{BMcR}
\end{remark}

\subsection{Application to Elliptic operators with $\Ck{m}$ coefficients}


In the previous section, we focused our attention to manifolds admitting metrics of Lipschitz regularity.
The study of more general non-smooth spaces has also received a lot of attention in the last few decades. 
Let us survey a few of these works.

In \cite{AC}, Anderson and Cheeger studied smooth manifolds admitting
$\Ck{\alpha}$-metrics. Through their precompactness theorem they were able to show that such singular spaces
can be seen to arise as limits of smooth manifolds with smooth metrics, admitting bounds on their curvature,
injectivity radius, and volume. Thus, in a certain $\Ck{\alpha}$-topology, these singular spaces were arising
as limit points of smooth spaces.

In \cite{CD}, Chen and Ding study the Ricci flow with degenerate initial metric.
That is, they study metrics of the form $\e^{u_0}g_0$, where $g_0$ is smooth and $\e^{u_0} \in \Lp{\infty}$
(this example falls into the category of conformally rough metrics, defined earlier in  Example \ref{Ex:Conf}). 
They were able to show that the Ricci flow,
with this conformally rough metric as an initial metric, had a solution which, for positive time, was smooth.

Rough metrics have also recently made their way into the study of certain problems motivated from
Physics. In \cite{GrantTassotti}, Grant and Tassotti show that 
the positive mass theorem holds for continuous Riemannian metrics with $\Sob[loc]{2,n/2}$ regularity on
manifolds of dimension less than or equal to 7 or spin manifolds in all dimensions. In doing so they generalised 
work of Schoen and Yau \cite{SchoenYau}, and Witten \cite{Witten}, in the smooth setting, to the rough setting. 

Another recent work, applying the theory of rough metrics, was the work of Bandara, Lakzian, and Munn in \cite{BLM}. 
In this work, the authors study a geometric flow,
introduced by Gigli and Mantegazza (see \cite{GM}), in the context of smooth manifolds
with rough metrics, and sufficiently regular heat kernels. They are able to provide a regularity theory for the flow
on such singular spaces in terms of the regularity of the heat kernel.  

There are many other works that take up the study of smooth manifolds admitting non-smooth metrics, but the 
understanding of differential operators and their qualitative behaviour, on such spaces, still remains a largely
unexplored topic. For example, many of the references outlined so far are really looking at the study of certain
PDEs on such singular spaces. They seek to understand features about these PDEs that are well known in the
smooth setting. With this as motivation, we can take a first-order elliptic operator, $\Dir$, or a power of such 
an operator, and seek to understand solutions to heat type equations, $u_t + \Dir u = 0$, or 
Schrodinger type equations, $u_t + i\Dir u = 0$, on the singular spaces used in the above references. 
From a functional analytic point of view, the way to approach such a question is to build the appropriate 
propagator for the equation, and this in turn leads to the question of essential self-adjointness of the operator
in the non-smooth setting.

Motivated by the above, in this section we will give an application of our work to the case of elliptic differential 
operators in the case of non-smooth metrics, which covers all those works mentioned above, and in the introduction.
We have decided to state the theorem in the case of general rough metrics. The reader who is not comfortable with
this level of generality can pick their favourite rough metric, and see the theorem as a statement about
the study of such elliptic operators on such non-smooth spaces.


In the following theorem, the differential operators $\Dir$ are with $\Ck{m}$ coefficients. 
As a consequence of Theorem \ref{Thm:SecondMain}, we obtain the following.

\begin{theorem}
Let $\cV$ be a smooth bundle over a smooth manifold $\cM$ with 
rough metrics $\mh$ and $\mg$. Suppose that $\mg$ induces a complete
length space. Let $\Dir$ be a first-order elliptic 
differential operator with $\Ck{m}$ coefficients, $m \geq 0$,
that is symmetric on $\Ck[c]{\infty}(\cV)$ in $\Lp{2}(\cV)$.
Then, $\Dir^k$ on $\Ck[c]{\infty}(\cV)$ for $k = 1, \dots, m + 1$ is essentially self-adjoint. 
\end{theorem}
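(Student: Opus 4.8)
The plan is to deduce this directly from Theorem~\ref{Thm:SecondMain}: since $\mg$ is assumed to induce a complete length space, it suffices to check that for each $1 \le k \le m+1$ the operator $\Dir^k$ satisfies the axioms \ref{D:First}--\ref{D:Last}, whereupon Theorem~\ref{Thm:SecondMain} delivers essential self-adjointness of $\Dir^k$ on $\Ck[c]{\infty}(\cV)$ at once. (The structural requirements on $\Dir$ as a first-order differential operator in the sense of \S\ref{S:MainRes} --- in particular $A^i \neq 0$ for some $i$ and $\Mul_\eta$ being an a.e.\ nonzero bounded multiplication --- are automatic, since ellipticity makes the principal symbol fibrewise invertible for $\xi \neq 0$.)

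First I would dispatch \ref{D:First}. By hypothesis $\Dir$ is symmetric on $\Ck[c]{\infty}(\cV)$ in $\Lp{2}(\cV)$, so, exactly as observed in \S\ref{S:MainRes} right after the derivation of the symmetry identity for powers, the closability of $\Dir_c$ and $\Dir_2$ together with the density argument identifying $\Dir_N = \close{\Dir_2}$ and $\Dir_D = \close{\Dir_c}$ as mutual adjoints yields $\inprod{\Dir u, v} = \inprod{u, \Dir v}$ for all $u \in \Sp_1(\Dir)$ and $v \in \Ck[c]{\infty}(\cV)$; that is, \ref{D:Close} holds for $\Dir$. Granting \ref{D:Reg} --- treated next, and already needed here to place $\Ck[c]{0,1}(\cV) \subset \dom(\Dir_D)$ --- the bootstrap of \S\ref{S:MainRes} through a partition of unity $\set{\eta_i}$ and the mapping property $\Dir^p(u_i) \in \Ck[c]{m-p}(\cV) \subset \Ck[c]{0,1}(\cV)$ for $p \le k-1$ promotes this to $\inprod{\Dir^k u, v} = \inprod{u, \Dir^k v}$ for $u \in \Sp_k(\Dir)$ and $v \in \Ck[c]{\infty}(\cV)$, which is precisely \ref{D:First} for the operator $\Dir^k$.

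Next I would verify \ref{D:Reg} for each $\Dir^k$, $1 \le k \le m+1$: if $u \in \Lp{2}(\cV)$ and $\Dir^k u \in \Lp[loc]{2}(\cV)$, then $u \in \Sob[loc]{k,2}(\cV)$. Since $\Dir$ is first-order elliptic, $\Dir^k$ is a $k$-th order elliptic operator whose top-order coefficient is a product of the $\Ck{m}$ coefficients $A^i$ (hence $\Ck{m}$) and whose remaining coefficients lie in $\Ck{m+1-k}$. For $k \le m$ this is comfortably within the scope of the classical interior regularity estimate for elliptic systems (e.g.\ Theorem~10.3 in \cite{Vargas}), applied in precompact coordinate charts after absorbing the bounded measurable density factor $\sqrt{\det \mg_{ij}}$ from $\mu_\mg$ and the bounded factor comparing $\mh$ against a Euclidean metric. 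The borderline case $k = m+1$ --- where $\Dir^{m+1}$ has merely continuous lowest-order coefficients, covering in particular $m = 0$, $k = 1$ --- is handled by the Calder\'on--Zygmund/Fourier-transform argument carried out in the critical-regularity part of the proof of Proposition~\ref{Prop:DiracA2}: localise in a precompact chart, trivialise so that the elliptic (invertible) principal symbol becomes constant-coefficient, absorb the $\Lp[loc]{\infty}$ density and connection terms into the test section, and conclude $\xi \cdot \widehat{u} \in \Lp{2}$, hence $u \in \Sob[loc]{1,2}$ locally, then iterate up to order $k$.

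The main obstacle is this last, borderline verification of \ref{D:Reg}: for $m \ge 1$ and $k \le m$ one is squarely inside textbook elliptic regularity, but for $k = m+1$ (and especially the purely continuous-coefficient case $m = 0$) Schauder theory is unavailable and one must reproduce the hands-on $\Lp{2}$-estimate of Proposition~\ref{Prop:DiracA2}, taking care that ellipticity of $\Dir$ is genuinely used --- through invertibility of the frozen principal symbol --- and that the low regularity of the rough metrics $\mg$ and $\mh$ enters only through bounded, measurable multiplicative factors that leave the estimate intact. Everything else --- the completeness/length-space input and the implication ``\ref{D:First}--\ref{D:Last} $\Rightarrow$ essential self-adjointness of $\Dir^k$ on $\Ck[c]{\infty}(\cV)$'' --- is supplied verbatim by Theorem~\ref{Thm:SecondMain}.
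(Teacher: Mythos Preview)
Your proposal is correct and follows the same route as the paper: verify \ref{D:First}--\ref{D:Last} for $\Dir^k$ and invoke Theorem~\ref{Thm:SecondMain}. The paper's treatment is more streamlined --- for \ref{D:First} it uses a direct cutoff argument (since $u \in \Sp_1$ is smooth by definition, $\phi u \in \Ck[c]{\infty}(\cV)$ for a smooth bump $\phi \equiv 1$ on $\spt v$, and symmetry on $\Ck[c]{\infty}(\cV)$ applies at once without any appeal to \ref{D:Reg} or the $\Ck[c]{0,1}(\cV) \subset \dom(\Dir_D)$ inclusion), and for \ref{D:Reg} it simply cites classical elliptic interior regularity (Theorem~10.3 in \cite{Vargas}) without the borderline-case analysis you outline.
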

\begin{proof}
Fix $v \in \Ck[c]{\infty}(\cV)$ and  $u \in \Sp_1(\cV)$.
By a partition of unity argument, we can find $\phi \in \Ck[c]{\infty}(\cM)$
such that $\phi = 1$ on $\spt v$. Thus, on $\spt v$, $\phi u = u$, and
hence, 
$$\inprod{\Dir u, v} = \inprod{\Dir (\phi u), v} = \inprod{\phi u, v} = \inprod{u, \Dir v},$$
where the second equality follows from the symmetry of $\Dir$ on $\Ck[c]{\infty}(\cV)$.
Thus, \ref{D:Close} is satisfied. The condition \ref{D:Reg} 
is a consequence of elliptic regularity, see Theorem 10.3 in \cite{Vargas}.
\end{proof}

\begin{remark}  
This theorem is a generalisation of 
\cite{Chernoff} by Chernoff for elliptic operators
and with minimal regularity assumptions on the underlying
metrics. Moreover, Chernoff assumes
that $\int_{0}^\infty c(r)^{-1}\ dr = +\infty$,
where $c(r)$ is the ``local velocity of propagation'' for
$\Dir$ inside a ball of radius $r > 0$. In the elliptic context, 
we are able to show that such an assumption is not necessary. 
\end{remark}


\subsection{A remark on the smooth setting: Chernoff's velocity of propagation condition}

In this section our smooth manifold and smooth bundle will always have smooth metrics.

One of the significant advances in the study of the essential self-adjointness of first-order
differential operators, and their powers, in the smooth setting, was made by Chernoff in \cite{Chernoff}.
In that paper, Chernoff studies essential self-adjointness via certain hyperbolic systems. 
Under certain conditions, he is
able to prove that a global smooth solution of such a system exists for all time 
(see p. 407 in \cite{Chernoff}), which he is then able to use to study essential self-adjointness.
The main condition that Chernoff assumes is to do with the local velocity of propagation of his operators
(see p. 407 in \cite{Chernoff}).

A consequence of our work is that for operators satisfying \ref{D:First} and \ref{D:Last}
(e.g. symmetric elliptic operators) in the smooth setting, Chernoff's local velocity of 
propagation condition is not necessary to obtain essential self-adjointness.

The  \emph{local velocity of propagation} is defined by:  
\begin{equation*}
c(x) = \sup\{\modulus{\comm{\Dir, f}(x)}_{\mathrm{op}} : f \in \Ck{\infty}(\cM), \vert \conn(f) (x)\vert_{\mg} = 1\},
\end{equation*}
and the \emph{velocity of propagation inside a ball} by
$c(r) = \sup\set{c(x) : x \in B_r}.$

The condition Chernoff imposes on his operator takes the form of a divergent integral. Namely, he requires
that 
$$ \int_{0}^\infty \frac{1}{c(r)}\ dr = +\infty. $$  

Although many operators in applications do satisfy this divergent integral condition, we would like to point out
that there are many others that do not. For these operators, the techniques of Chernoff, even in the setting
of smooth metrics, is inadequate to prove essential self-adjointness.

It is not so hard to construct examples of first order operators that do not satisfy the above condition. 
Let us give one. Consider a smooth, real-valued nonzero function $f$, 
let  $c_f(r) = \sup\set{f^2(x): x \in B_r}$, and assume that $c_f^{-1} \in \Lp{1}([0,\infty))$.
Let $\Dir$ denote a Dirac operator on a Dirac bundle $\cS$ with smooth coefficients. 
From the symbol computation carried out in Section \ref{Sec:App:Dirac}, it is easy to see that for such an
operator we have $c^{\Dir} (r) = 1$. Let us now consider the operator, $\Dir_f u= f\Dir (f u)$, 
which is a symmetric operator on $\Ck{\infty}(\cS)$.
It is readily verified that the principal symbol $\comm{\Dir_f,\eta\iden}(x) = f^2(x) \comm{\Dir, \eta\iden}(x)$
(where $\eta$ is a smooth function)
and the speed of propagation for the operator $\Dir_f$ is $c^{\Dir_f}(r) = c_f(r)$.
Therefore $\Dir_f$ is again elliptic as $f \neq 0$,
and since we assumed $c_f^{-1} \in \Lp{1}([0,\infty))$, 
the integral 
$$\int_{0}^\infty \frac{1}{c^{\Dir_f}(r)}\ dr = \int_{0}^\infty \frac{1}{c_f(r)}\ dr < \infty.$$
Thus Chernoff's condition does not hold for the  operator $\Dir_f$. 
However, by
applying Theorem \ref{Thm:FirstMain2}
to this operator, we obtain that $\Dir_f$ and its powers are all essentially self-adjoint.
Note that a similar conclusion follows if we replace $\Dir_f$ with $\Dir_f + g \iden$, where
$g \in \Lp[loc]{\infty}(\cM)$ and real-valued.

We would like to point out to the reader that Chernoff's work is for general symmetric first-order 
differential operators, and their powers. He makes no assumption on the regularity of the 
operator, whereas in our case, we assume regularity via \ref{D:Reg}. Therefore, while our methods are 
robust enough to handle many operators that typically arise in applications 
that do not satisfy Chernoff's assumptions, 
they are certainly not adequate for the study of more general operators that do not satisfy \ref{D:Reg}.


\section{Essential self-adjointness and the negligible boundary condition}
\label{S:EssNeg}
\label{S:EssNeg2}

In this section, we establish the equivalence of negligible boundary 
and essential self-adjointness for powers of operators. This is the crucial property we 
exploit throughout the later parts of this paper. We emphasise
that the background geometric assumptions here are minimal. 
In particular, we do not assume completeness.

The central tool is for us to be able to 
equate $\dom^l(\Dir)$ with $\dom_\infty^l(\Dir)$.
We establish some preliminary lemmas that will aid
us  in obtaining this equality.   

\begin{lemma}
\label{Lem:Local}
Let  $u \in \Sob[loc]{l,2}(\cV)$ 
with $\spt u$ compact,
$U$ a precompact open set with $\spt u \subset U$
and $\conn^U$ any smooth connection in $U$.
Then, $u \in \dom^l_0(\Dir)$ and there exists a constant $C > 0$
(dependent on $\conn^U$ and $U$) such
that
$$\norm{\Dir^l u}_{\Lp{2}(U,\cV)} \leq 
		C \cbrac{\sum_{i=1}^l \norm{(\nabla^U)^{i} u}_{\Lp{2}(U, \cV)} + \norm{u}_{\Lp{2}(U, \cV)}}.$$ 
\end{lemma}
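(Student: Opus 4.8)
The plan is to first reduce to a statement about a single chart and then exploit the fact that, inside a precompact trivialisation, the rough metric $\mg$ is comparable to a Euclidean metric and $\mu_\mg$ is comparable to Lebesgue measure, so that all the relevant $\Lp{2}$-norms are equivalent to the corresponding Euclidean-measure norms up to constants depending only on $U$. The first step would be to cover $\spt u$ by finitely many precompact charts $(U_j,\phi_j)$ contained in $U$ and take a smooth partition of unity $\set{\eta_j}$ subordinate to this cover, so that $u = \sum_j \eta_j u$. Since the claimed estimate is linear in $u$ and the norms on both sides are comparable to norms computed with a fixed smooth connection over the precompact set $U$, it suffices to prove the statement for each $\eta_j u$, i.e. we may assume $\spt u$ lies in a single coordinate chart, where $\cV$ trivialises as $U \times \C^N$.

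Next I would argue that $u \in \dom^l_0(\Dir)$, i.e. that $u$ is a limit in the $l$-graph norm of $\Ck[c]{\infty}(\cV)$-sections. For this I would mollify: transporting to $\phi(U) \subset \R^n$, convolve $u$ with a standard mollifier $\rho_\epsilon$ to obtain $u_\epsilon \in \Ck[c]{\infty}$, supported in a slightly larger (still precompact) set for $\epsilon$ small. Because $\Dir$ has $\Ck{m}$ coefficients and $l \leq m+1$, writing $\Dir^l$ locally as a differential operator of order $l$ with $\Ck{0}$ (in fact $\Ck{m-l+1}$, hence at least continuous) coefficients, one checks $\Dir^l u_\epsilon \to \Dir^l u$ in $\Lp{2}$ on $U$ as $\epsilon \to 0$; here one uses that $u \in \Sob[loc]{l,2}$ so that all derivatives up to order $l$ of $u_\epsilon$ converge in $\Lp{2}_{\mathrm{loc}}$ to those of $u$, and that the coefficients are bounded and continuous on the relevant compact set (so multiplication is continuous on $\Lp{2}$ and commutes with the $\Lp{2}$-limit). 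This gives $u_\epsilon \to u$ in the graph norm, hence $u \in \dom^l_0(\Dir)$, and simultaneously produces the estimate by passing to the limit in the corresponding estimate for $u_\epsilon$.

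It then remains to prove the quantitative bound. Again working in the chart, expand $\Dir^l$ locally as $\Dir^l = \sum_{|\alpha| \leq l} c_\alpha \partial^\alpha$ with $c_\alpha \in \Ck{0}$, bounded on $\close{\phi(U)}$ by a constant $C_1 = C_1(U)$; thus $\norm{\Dir^l u}_{\Lp{2}(U,\Leb)} \leq C_1 \sum_{|\alpha|\leq l}\norm{\partial^\alpha u}_{\Lp{2}(U,\Leb)}$. Now convert Euclidean derivatives into a fixed smooth connection $\conn^U$: on the precompact set $U$ the iterated covariant derivatives $(\conn^U)^i u$ for $i\le l$, together with $u$, control all the partials $\partial^\alpha u$, $|\alpha|\le l$, up to a constant depending only on $\conn^U$, the chart, and $U$ (this is exactly the comparability of smooth connections over precompact sets quoted just before the lemma, applied to $\conn^U$ versus the flat connection of the chart). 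Finally, the comparability $C^{-1}\modulus{\cdot}_{\mg}\le\modulus{\cdot}_{\delta}\le C\modulus{\cdot}_{\mg}$, the bundle local-comparability of $\mh$, and $d\mu_\mg = \uptheta\, d\pullb{\phi}\Leb$ with $\uptheta$ bounded above and below on $\close{U}$, let one pass between $\Lp{2}(U,\Leb)$-norms and the $\Lp{2}(U,\cV)$-norms of the statement at the cost of a constant depending only on $U$. Chaining these inequalities yields the asserted estimate, and summing the partition-of-unity pieces recovers the general case. The main obstacle is the bookkeeping in the mollification step when $l \geq 2$: one must be slightly careful that the coefficients of $\Dir^l$ are only continuous (order $m-l+1$ could be $0$), so the argument must rely on $\Lp{2}$-convergence of all derivatives of $u$ up to order $l$ rather than on any derivative falling on the coefficients, and must handle the fact that $\Dir^l u$ a priori only exists distributionally for $u \in \Ck[c]{\infty}$ — this is precisely why we pass through the smooth approximants and take limits.
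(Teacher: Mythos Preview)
Your proposal is correct and follows essentially the same route as the paper: reduce via a partition of unity to a single precompact chart, replace $\nabla^U$ by the flat connection there (using the comparability of smooth connections over precompact sets), approximate $u$ in the $\Sob{l,2}$-norm by $\Ck[c]{\infty}$ sections (the paper invokes density abstractly, you mollify explicitly), and then use the local expression $\Dir = A^i\partial_i + B$ with bounded coefficients to control $\norm{\Dir^l(u_n-u_m)}$ by the Sobolev norm, so that closability of $\Dir^l_c$ yields $u\in\dom^l_0(\Dir)$ together with the estimate. Your write-up is in fact more explicit than the paper's about the passage between the rough-metric $\Lp{2}$-norms and the Euclidean ones; the only minor slip is the remark that ``$\Dir^l u$ a priori only exists distributionally for $u\in\Ck[c]{\infty}$'' --- you mean for $u\in\Sob[loc]{l,2}$, since for smooth compactly supported $u$ the expression $\Dir^l u$ is classical.
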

\begin{proof}
Observe that, via a partition of unity argument, 
we can assume that $\spt u \subset U$ with $U$
corresponding to a chart. Also, since $U$ is precompact, 
any two smooth connections, as well as all of their powers, 
are comparable. Thus, we assume that $\conn^U$ is the flat
connection inside $U$ with respect to a fixed trivialisation.

Next, note by the hypothesis on $u$, we have that
$u \in \Sob{l,2}(U,\cV)$, where by what we have said
we can fix the norm 
$\norm{u}_{\Sob{1,2}(U,\cV)} = \sum_{j=1}^l \norm{(\conn^U)^j u} + \norm{u}.$
Thus, there is a sequence $u_n \in \Ck[c]{\infty}(\cV)$ such that
$u_n \to u$ in $\norm{\mdot}_{\Sob{1,2}(U,\cV)}$.
Using the fact that $\Dir = A^i \partial_i + B$ inside $U$, 
we obtain  
$$\norm{\Dir^l (u_n - u_m) } \lesssim \sum_{j=1}^l \norm{(\conn^U)^j (u_n - u_m)}_{U} + \norm{u_n - u_m}_{U}.$$
As $m,\ n \to \infty$, the right hand side of this expression tends to zero, 
and hence, we obtain that $\Dir^l u_n \to v$. Since $\Dir^l$ is closable
and the sequence $u_n \in \Ck[c]{\infty}(\cV)$,
we conclude that $u \in \dom^l_0(\Dir)$ and $v = (\Dir^l)_D u$.
The estimate follows easily.  
\end{proof}

With the aid of this lemma, we compute the maximal domain. 
Our proof here is inspired by the 
work of Masamune in \cite{Masamune} (see Theorem 2, p.114).

\begin{proposition}
\label{Prop:DomMax}
Under the assumptions \ref{D:First}-\ref{D:Last}  
for $\Dir^l$,  we have the equality $\dom^l(\Dir) = \dom^l_\infty(\Dir) = \dom( \adj{(\Dir_c^l)})$.
\end{proposition}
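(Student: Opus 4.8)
The plan is to prove the equality in two halves, treating $\dom^l_\infty(\Dir) = \dom(\adj{(\Dir_c^l)})$ as an essentially formal identity and concentrating on $\dom^l(\Dir) = \dom^l_\infty(\Dir)$. For the formal identity: since $\Ck[c]{\infty}(\cV)$ is dense in $\Lp{2}(\cV)$, the operator $\Dir_c^l$ is densely defined, and by the Riesz-representation definition of the adjoint, $u \in \dom(\adj{(\Dir_c^l)})$ holds exactly when $v \mapsto \inprod{\Dir^l v, u}$ is $\Lp{2}$-continuous on $\Ck[c]{\infty}(\cV)$, which is precisely the requirement that the distribution $v \mapsto \inprod{u, \Dir^l v}$ be represented by an element of $\Lp{2}(\cV)$, i.e. $u \in \dom^l_\infty(\Dir)$, and in that case $\adj{(\Dir_c^l)}u$ equals the distributional $\Dir^l u$. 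For the inclusion $\dom^l(\Dir) \subseteq \dom^l_\infty(\Dir)$: the symmetry relation $\inprod{\Dir^l u, v} = \inprod{u, \Dir^l v}$ for $u \in \Sp_l(\Dir)$, $v \in \Ck[c]{\infty}(\cV)$ --- derived in \secref{S:MainRes} --- shows $\Sp_l(\Dir) \subseteq \dom^l_\infty(\Dir)$ with the distributional and classical $\Dir^l$ agreeing there, so the inclusion of domains is isometric for the graph norms; since $\dom^l_\infty(\Dir) = \dom(\adj{(\Dir_c^l)})$ is complete in its graph norm and $\dom^l(\Dir)$ is by definition the graph-norm closure of $\Sp_l(\Dir) = \dom(\Dir_2^l)$, we get $(\Dir^l)_N \subseteq \adj{(\Dir_c^l)}$, hence the inclusion.

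The substance is the reverse inclusion $\dom^l_\infty(\Dir) \subseteq \dom^l(\Dir)$; this is where both axioms are used, and the argument is modelled on a Meyers--Serrin-type smoothing, following Masamune. Fix $u \in \dom^l_\infty(\Dir)$, so $u, \Dir^l u \in \Lp{2}(\cV)$ distributionally. By \ref{D:Reg} we have $u \in \Sob[loc]{l,2}(\cV)$. Choose a smooth partition of unity $\set{\rho_k}_k$ subordinate to a locally finite cover of $\cM$ by precompact charts $\set{U_k}$; then each $\rho_k u$ is an $\Sob{l,2}(U_k, \cV)$-section of compact support, so by \lemref{Lem:Local} it lies in $\dom^l_0(\Dir)$ with $\Dir^l(\rho_k u) \in \Lp{2}(\cV)$ and $\norm{\Dir^l w}_{\Lp{2}} \leq C_k \norm{w}_{\Sob{l,2}(U_k,\cV)}$ for $w$ supported in $U_k$. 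A computation against $v \in \Ck[c]{\infty}(\cV)$ --- using $\sum_k \rho_k = 1$, that only finitely many $\rho_k$ meet $\spt v$, and the symmetry relation for $(\Dir^l)_D$ --- gives $\sum_k \Dir^l(\rho_k u) = \Dir^l u$ as distributions. Now fix $\delta > 0$ and, for each $k$, let $J_{\epsilon_k}(\rho_k u)$ be a mollification of $\rho_k u$ inside $U_k$ with $\epsilon_k$ so small that $J_{\epsilon_k}(\rho_k u)$ is smooth, supported in $U_k$, with $\norm{J_{\epsilon_k}(\rho_k u) - \rho_k u}_{\Lp{2}} < \delta 2^{-k}$ and --- using that mollification converges in $\Sob{l,2}(U_k,\cV)$ together with the estimate of \lemref{Lem:Local} --- also $\norm{\Dir^l J_{\epsilon_k}(\rho_k u) - \Dir^l(\rho_k u)}_{\Lp{2}} < \delta 2^{-k}$. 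Set $u_\delta = \sum_k J_{\epsilon_k}(\rho_k u)$, a locally finite sum, hence a smooth section. Then $\norm{u_\delta - u}_{\Lp{2}} < \delta$; moreover $\sum_k (\Dir^l J_{\epsilon_k}(\rho_k u) - \Dir^l(\rho_k u))$ converges absolutely in $\Lp{2}(\cV)$ to some $w$ with $\norm{w} < \delta$, and comparing termwise with $\sum_k \Dir^l(\rho_k u) = \Dir^l u$ shows that the classically computed $\Dir^l u_\delta$ equals $\Dir^l u + w \in \Lp{2}(\cV)$. Hence $u_\delta \in \Sp_l(\Dir)$ and $u_\delta \to u$ in the $\Dir^l$-graph norm, so $u \in \dom^l(\Dir)$.

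The main obstacle --- and the reason for localizing with the partition of unity rather than truncating by $\eta_K = \sum_{k\leq K}\rho_k$ --- is that $\comm{\Dir^l, \eta_K}u$ involves derivatives of $u$ up to order $l-1$, which for $l \geq 2$ are only known to lie in $\Lp[loc]{2}(\cV)$, not $\Lp{2}(\cV)$, so one cannot control $\norm{\comm{\Dir^l, \eta_K}u}$ as $K \to \infty$; localizing \emph{first} is what repairs this, since each $\rho_k u$ has compact support, making $\Dir^l(\rho_k u)$ globally $\Lp{2}$ by \lemref{Lem:Local} and letting the mollification converge in $\Sob{l,2}(U_k,\cV)$, so that each error can be forced below $\delta 2^{-k}$ with a $k$-dependent scale. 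The remaining care is bookkeeping: that $u_\delta$ is genuinely a smooth element of $\Sp_l(\Dir)$ with $\Dir^l u_\delta = \Dir^l u + w$ --- which rests on the fact that the relation $\inprod{\Dir^l(\cdot), v} = \inprod{\cdot, \Dir^l v}$ is local in $v \in \Ck[c]{\infty}(\cV)$ and therefore holds for all smooth sections, not merely those in $\Sp_l(\Dir)$ --- together with reconciling the three notions of $\Dir^l$ (classical on smooth sections, distributional, and via the closures $(\Dir^l)_D \subseteq (\Dir^l)_N$), which follows from $\Dir_c^l \subseteq \Dir_2^l$.
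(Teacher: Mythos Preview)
Your proof is correct and follows essentially the same approach as the paper's: both establish the formal identity $\dom^l_\infty(\Dir) = \dom(\adj{(\Dir_c^l)})$ by unpacking definitions, and then prove $\dom^l_\infty(\Dir) \subseteq \dom^l(\Dir)$ via a Meyers--Serrin/Masamune-style localization --- using \ref{D:Reg} to obtain $u \in \Sob[loc]{l,2}(\cV)$, a partition of unity subordinate to precompact charts, \lemref{Lem:Local} to control $\Dir^l(\rho_k u)$, and mollification with geometrically decaying errors $\delta 2^{-k}$ to assemble a smooth approximant in $\Sp_l(\Dir)$. The only cosmetic difference is in the final bookkeeping: the paper shows $\Dir^l u^\delta \in \Lp{2}(\cV)$ by testing against $v \in \Ck[c]{\infty}(\cV)$ and estimating $\modulus{\inprod{u^\delta - u, \Dir^l v}} \leq \delta\norm{v}$, whereas you identify $\Dir^l u_\delta = \Dir^l u + w$ directly via the locally finite sums; both are valid.
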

\begin{proof}
We first prove the second equality. For that, note that: 
\begin{align*} 
\dom^l_\infty(\Dir) 
	&= \set{u \in \Lp{2}(\cV): \Dir^l u \in \Lp{2}(\cV)} \\
	&= \set{u \in \Lp{2}(\cV): \modulus{(\Dir^l u)(v)} = \modulus{ \inprod{u, \Dir^l v}} \lesssim \norm{v},\ \forall v \in \Ck[c]{\infty}} \\ 
	&= \set{u \in \Lp{2}(\cV): v \mapsto \inprod{u,\Dir^l v}\ \text{is continuous}} = \dom( \adj{(\Dir_c^l)}).
\end{align*}

To prove the first equality, it suffices to
show that $\dom^l_\infty(\Dir) \subset \dom^l(\Dir)$. 
Fix $u \in \dom^l_\infty(\Dir)$, let $\set{(U_i, \psi_i)}$ be a pre-compact open cover
of $\cM$ by locally comparable trivialisations/charts for both $\cM$ and $\cV$. 
Let $\set{\eta_i}$ be a smooth partition of unity subordinate to $\set{U_i}$. 
On writing $u_i = \eta_i u$,
it is clear that $u = \sum_{i=1}^\infty u_i$ 
pointwise almost-everywhere and also in $\Lp{2}$.

We show that $u_i \in \dom^l_\infty(\Dir)$.
For that, observe that by \ref{D:Reg}, we obtain $u \in \Sob[loc]{l,2}(\cV)$.
Fix a choice of smooth frame $\set{e_{i,j}}_{j=1}^N$ in $U_i$,
and the flat connection $\conn$ with respect to $\set{e_{i,j}}$ inside $U_i$.
Also, we have 
$$ \conn^l(\eta_i u) = \sum_{j=0}^l C^j_l \conn^j \eta_i \tensor \conn^{l - j} u,$$
where $C^j_l = \frac{l!}{j!(l - j)!}$. 
Since $\eta_i$ is smooth, $\conn^j \eta_i$ is bounded inside $U_i$ for all $j > 0$, 
by considering a sequence $u^n_i \to u_i$ in $\norm{\mdot}_{\Sob{1,2}(U,\cV)}$
with $u^n_i \in  \Ck{\infty}(U_i, \cV)$, we deduce that $u_i \in \Sob{1,2}(U,\cV)$.
Therefore, by Lemma \ref{Lem:Local}, we obtain that
$u_i \in \dom_0^l(\Dir) \subset \dom^l_\infty(\Dir)$.

Define $u_i^\epsilon = (J_\epsilon \convolve u_i^j)\ e_{i,j}$,
for $\epsilon > 0$, where $J_\epsilon$ is the standard symmetric mollifier. 
We can choose $\epsilon < \epsilon_i$ so that $\spt u_i^\epsilon \subset U_i$.
By Lemma \ref{Lem:Local}, there is a constant $C_i$ such that
$$ \norm{\Dir^l u_i - \Dir^l u_i^\epsilon} \
	\leq C_i \cbrac{\sum_{j=1}^l\norm{\conn^j (u_i - u_i^\epsilon)}_{\Lp{2}(U_i, \cV)} + \norm{u_i - u_i^\epsilon}_{\Lp{2}(U_i,\cV)}}.$$ 
Since we assume that $U_i$ satisfy the local comparability condition,
setting $f_i = u_i - u_i^\epsilon$, we have
$$\norm{\conn^j f_i}_{\Lp{2}(U_i, \cV)}^2 
	= \int_{U_i} \modulus{\conn^j f_i(x)}_{(\mg \tensor \mh)(x)}^2\ d\mu_\mg
	\lesssim \int_{U_i} \modulus{\conn^j f_i(x)}_{\delta(x)}^2\ d\pullb{\psi}_i\Leb(x),$$
where $\pullb{\psi}_i \Leb$ is the pullback of the Lebesgue measure inside $U_i$.
By standard mollification theory, we have that the right hand side of this expression 
tends to zero as $\epsilon \to 0$. Also, observe that $u_i^\epsilon \in \Sp_l(\Dir)$.

Fix $\delta > 0$ and for each $i$, choose $\delta_i > 0$ such that
$$ \norm{\Dir^l u_i - \Dir^l u_i^{\delta_i}} \leq \frac{\delta}{2^i}.$$
For each $x \in \cM$,  define
$u^\delta(x) = \sum_{i = 1}^\infty u_i^{\delta_i}(x)$,
 this is well-defined because $\set{\eta_i}$ is locally finite.
It is easy to see that $u^\delta \in \Ck{\infty}(\cV)$,
and to show that $u^\delta \in \Sp_l(\Dir)$, it suffices
to show that $\Dir^l u^\delta  \in \Lp{2}(\cV)$.
Fix $v \in \Ck[c]{\infty}(\cV)$ and note that
$$(\Dir^l u^\delta)(v) = \inprod{u^\delta, \Dir^l v} = \inprod{u^\delta - u, \Dir^l v}  + \inprod{u, \Dir^l v}.$$
By assumption, we have that $\modulus{\inprod{u, \Dir^l v}} \lesssim \norm{v}$,
and so we are reduced to showing that $\modulus{\inprod{u^\delta - u, \Dir^l v}} \lesssim \norm{v}.$
Since $\Dir$ is local, $\Dir^l$ is local, and 
we have that $\spt \Dir^l v \subset \spt v$. Choose $M \in \Na$
such that $\sum_{i = 1}^M \eta_i = 1$ on $\spt v$, we then have
\begin{multline*}
\modulus{\inprod{u^\delta - u, \Dir^l v}}
	= \modulus{\inprod{ \sum_{i=1}^N u_i^{\delta_i} - \sum_{i = 1}^N u_i, \Dir^l v}}
	\leq \sum_{i= 1}^N \modulus{\inprod{u_i^{\delta_i} - u_i, \Dir^l v}}\\
	\leq \sum_{i=1}^N \frac{\delta}{2^i} \norm{v} 
	\leq \delta \norm{v}.
\end{multline*}
Thus, $u^\delta \in \Sp_l(\Dir)$. By a similar argument,
we obtain that $u^\delta \to u$ in
the graph norm $\norm{\mdot}_{\Dir^l}  = \norm{\Dir^l \mdot}+ \norm{\mdot}$.
\end{proof}

With the aid of this proposition, we prove Theorem \ref{Thm:FirstMain}. 
\begin{proof}[Proof of Theorem \ref{Thm:FirstMain}]
If $\dom^l_0(\Dir) = \dom^l(\Dir)$, then $(\Dir^l)_D = (\Dir^l)_N = \adj{(\Dir_c^l)}$, 
where the last equality follows from Proposition \ref{Prop:DomMax}. That is,
$(\Dir^l)_D = \adj{(\Dir^l)}_D$, which shows that $(\Dir^l)_D$ is self-adjoint.

Now, if $(\Dir^l)_D$ is self-adjoint, we have that $(\Dir^l)_D = \adj{(\Dir^l)_D} = (\Dir^l_N)$
and so $(\Dir^l)_N$ is self-adjoint. 
If $(\Dir^l)_N$ is self-adjoint, since $\Sp_l(\Dir) \subset \dom(\Dir^l)_N$,
it follows that \eqref{lneg} is satisfied.

Recall the notation $\Dir_2^l = \Dir_l$ with domain $\dom(\Dir_2^l) = \Sp_l(\Dir)$.
Assuming the negligible boundary condition, we obtain that 
$(\Dir^l)_N \subset \adj{(\Dir_2^l)}$. But since $\Dir_c^l \subset \Dir_2^l$,
we have that $\adj{(\Dir_2^l)} \subset \adj{(\Dir_c^l)} = (\Dir^l_N)$
by Proposition \ref{Prop:DomMax}. So, $\adj{(\Dir_2^l)} = \adj{(\Dir_c^l)}$
and so by considering the bi-adjoint, we obtain that $(\Dir^l)_N = (\Dir^l)_D$.

The essential self-adjointness of  $\Dir_c^l$ is equivalent to the 
self-adjointness of $(\Dir^l)_D$.
\end{proof}
\section{Density properties from higher to lower powers}
\label{S:HtoL}
\label{S:core}

In this section, we prove that, if $(\Dir^m)_0 = \close{\Dir^m_c}$
is self-adjoint, then so is every power $(\Dir^l)_0$ for 
$l = 1, \dots, m$. The reasons for this involve no geometry, but only 
properties of the operators, and to highlight that, we will keep 
the presentation sufficiently abstract. The way we will proceed
is to move from the operator $(\Dir^m)_0$ to the operator
$\modulus{(\Dir^m)_0}$, then use results from interpolation 
theory to assert that $\modulus{(\Dir^m)_0^\alpha} = (\sqrt{ (\Dir^m)_0^2})^\alpha$ 
has $\Ck[c]{\infty}(\cV)$ as a core. We then employ an alternative
argument to show that this is a core for $\Dir_0$, then
use functional calculus and operator theory to assert
that $\modulus{(\Dir^m)_0^\alpha} = \modulus{\Dir_0}^{m \alpha}.$

The functional calculus we use here is the holomorphic
functional calculus. For self-adjoint operators $T$, this functional 
calculus is given by: 
$$ \psi(T)u = \frac{1}{2\pi} \oint_{\gamma} \psi(\zeta)(\zeta \iden -T)^{-1}u\ d\zeta,$$
where $\gamma$ is a curve cutting the spectrum at zero and infinity
inside a bisector. The functions $\psi$
are holomorphic on a bisector and 
decay sufficiently rapidly at $0$ and at $\infty$.
The functional calculus can be extended to 
bounded holomorphic functions $f$ on a bisector. 
A detailed exposition of these ideas can be found in
\cite{ADMc} by Albrecht, Duong, McIntosh and in
the book \cite{Haase} by Haase. 

\begin{lemma}
\label{Lem:Core1}
Let $T$ be a non-negative self-adjoint operator on 
a Hilbert space $\Hil$, and let $\sC \subset \dom(T)$
be a core for $T$. Then, $\sC$ is a core for 
$T^\alpha$ for all $\alpha \in [0,1]$. 
\end{lemma}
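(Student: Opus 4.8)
The plan is to route the statement through the intermediate space $\dom(T)$: I will first verify that $\dom(T)$ is itself a core for $T^\alpha$, and then transfer density from $\sC$ to $\dom(T)$ by observing that the $T^\alpha$-graph norm is controlled by the $T$-graph norm on $\dom(T)$.

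First I would record an elementary spectral estimate. Let $\set{E_\lambda}_{\lambda \geq 0}$ be the spectral resolution of $T$, which exists since $T$ is non-negative and self-adjoint. For $u \in \dom(T)$ and $\alpha \in [0,1]$, the pointwise bound $\lambda^{2\alpha} \leq 1 + \lambda^2$ (valid for all $\lambda \geq 0$) gives
\begin{equation*}
\norm{T^\alpha u}^2 = \int_0^\infty \lambda^{2\alpha}\ d\inprod{E_\lambda u, u} \leq \int_0^\infty (1 + \lambda^2)\ d\inprod{E_\lambda u, u} = \norm{u}^2 + \norm{Tu}^2 .
\end{equation*}
In particular $\dom(T) \subseteq \dom(T^\alpha)$, and the inclusion $(\dom(T), \norm{\mdot}_T) \hookrightarrow (\dom(T^\alpha), \norm{\mdot}_{T^\alpha})$ is bounded, where $\norm{w}_{T^\alpha} = \norm{w} + \norm{T^\alpha w}$.

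Next I would show that $\dom(T)$ is dense in $\dom(T^\alpha)$ for the norm $\norm{\mdot}_{T^\alpha}$. Given $u \in \dom(T^\alpha)$, set $u_n = \chi_{[0,n]}(T)u$ via the Borel functional calculus. Since $\lambda \mapsto \lambda\, \chi_{[0,n]}(\lambda)$ is bounded, $u_n \in \dom(T)$; moreover $u_n \to u$ and $T^\alpha u_n = \chi_{[0,n]}(T) T^\alpha u \to T^\alpha u$ by dominated convergence against the finite measure $\inprod{E_{(\mdot)} u, u}$. Hence $u_n \to u$ in $\norm{\mdot}_{T^\alpha}$, so $\dom(T)$ is a core for $T^\alpha$.

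Finally I would chain the two facts. Fix $u \in \dom(T^\alpha)$ and $\epsilon > 0$. Choose $v \in \dom(T)$ with $\norm{u - v}_{T^\alpha} < \epsilon/2$ by the previous step, and then, since $\sC$ is a core for $T$, choose $w \in \sC$ with $\norm{v - w}_T$ small enough that the boundedness of the inclusion from the first step yields $\norm{v - w}_{T^\alpha} < \epsilon/2$. Then $\norm{u - w}_{T^\alpha} < \epsilon$, proving that $\sC$ is dense in $\dom(T^\alpha)$ for the graph norm, i.e.\ a core for $T^\alpha$. I do not anticipate a genuine obstacle here; the only analytic ingredient is the pointwise inequality $\lambda^{2\alpha} \leq 1 + \lambda^2$ feeding the spectral estimate (one could instead apply Hölder's inequality inside the spectral integral to obtain the sharp interpolation bound $\norm{T^\alpha u} \leq \norm{u}^{1-\alpha}\norm{Tu}^\alpha$, but this is not needed), and the remainder is a routine two-step density argument.
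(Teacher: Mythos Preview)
Your proof is correct and follows the same three-step skeleton as the paper's argument: establish $\dom(T)\subset\dom(T^\alpha)$ with norm control, show $\dom(T)$ is dense in $\dom(T^\alpha)$ for the graph norm, then chain through $\sC$. The difference lies in the tools. For the norm bound, the paper writes $T^\alpha=(\iden+T)f_\alpha(T)$ with $f_\alpha(\zeta)=\zeta^\alpha/(1+\zeta)$ bounded and invokes the holomorphic functional calculus, whereas you use the elementary pointwise inequality $\lambda^{2\alpha}\leq 1+\lambda^2$ inside the spectral integral. For density of $\dom(T)$ in $\dom(T^\alpha)$, the paper appeals to real interpolation theory (identifying $\dom(T^\alpha)$ with $(\Hil,\dom(T))_{\alpha,2}$ via results in Haase and Lunardi), while you use direct spectral truncation $u_n=\chi_{[0,n]}(T)u$ and dominated convergence. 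Your route is more self-contained and avoids the black-box interpolation references; the paper's route has the advantage of fitting into the holomorphic functional calculus framework used elsewhere in that section, but for this lemma your argument is cleaner.
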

\begin{proof}
The case $\alpha  = 0,\ 1$ are easy, so we fix $\alpha \in (0,1)$.

First we show that  $\dom(T)$ is dense in $\dom(T^\alpha)$.
Theorem 6.6.1 in \cite{Haase} yields that the 
real-interpolation space, $(\Hil, \dom(T))_{\alpha, p}$ has $\dom(T)$
as a dense subspace for $\alpha \in (0,1)$ and $p \in [1, \infty)$.
By Theorem 4.3.12 in \cite{Lunardi}, on choosing $p = 2$, 
we obtain that $(\Hil, \dom(T))_{\alpha, 2} = \dom(T^\alpha)$.

Next, note that $\dom(T) \subset \dom(T^\alpha)$
by functional calculus
since $T^\alpha = (\iden + T) f_\alpha(T)$, where
$f_\alpha(\zeta) = \zeta^\alpha/(1 + \zeta)$. Moreover, 
since $f_\alpha(T) \in \bddlf(\Hil)$,
we get that $\norm{T^\alpha u} \lesssim \norm{(\iden + T)u}$
for all $u \in \dom(T)$.

Fix $u \in \dom(T^\alpha)$, since we have
already proved that $\dom(T)$ is dense in $\dom(T^\alpha)$,
we can find  $v_n \in \dom(T)$ such that
$$\norm{u - v_n} \leq \frac{1}{2^n}\quad \text{and}\quad 
	\norm{T^\alpha u - T^\alpha v_n} \leq \frac{1}{2^n}.$$
Moreover, since $\sC$ is a core for $T$, we can find
$u_n \in \sC$ such that
$$\norm{u_n - v_n} < \frac{1}{2^n}\quad \text{and}\quad \norm{Tv_n - Tu_n} < \frac{1}{2^n}.$$ 
Thus, on combining this with the estimate $\norm{T^\alpha u} \lesssim \norm{(\iden + T) u}$
for $u \in \dom(T)$, and since $v_n \in \dom(T)$,  
\begin{multline*}
\norm{T^\alpha u - T^\alpha u_n} 
	\leq \norm{T^\alpha u_n - T^\alpha v_n} + \norm{T^\alpha v_n - T^\alpha u} \\
	\lesssim \norm{(\iden + T)(u_n - v_n)} + \frac{1}{2^n}
	\leq \norm{u_n - v_n} + \norm{T u_n - T v_n} + \frac{1}{2^n}
	< \frac{3}{2^n}.
\end{multline*}
This shows that $\sC$ is a core for $T^\alpha$.
\end{proof}

In the following lemma, we note that for a self-adjoint
operator $T$, $\modulus{T} = \sqrt{T^2}$.

\begin{lemma}
\label{Lem:PowerMod}
Let $T$ be self-adjoint on $\Hil$. Then, for any integer $k \geq 0$,
$\dom(\modulus{T}^k) = \dom(T^k)$ with $\norm{\modulus{T}^k u} = \norm{T^ku}$.
\end{lemma}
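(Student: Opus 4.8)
The natural tool is the spectral theorem, so the plan is to reduce everything to scalar integrals against the spectral measure. Write $T = \int_{\R} \lambda\, dE(\lambda)$ for the projection-valued spectral measure $E$ of the self-adjoint operator $T$, and for $u \in \Hil$ let $\mu_u$ be the finite positive Borel measure $B \mapsto \inprod{E(B)u, u}$. The Borel functional calculus then gives, for any Borel function $\phi\colon \R \to \C$, that $\dom(\phi(T)) = \set{u \in \Hil : \int_{\R} \modulus{\phi(\lambda)}^2\, d\mu_u(\lambda) < \infty}$ and $\norm{\phi(T)u}^2 = \int_{\R} \modulus{\phi(\lambda)}^2\, d\mu_u(\lambda)$ on that domain. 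The first thing I would do is record this, together with the standard fact that $\modulus{T} = \sqrt{T^2}$ is precisely the functional calculus of $T$ applied to $\lambda \mapsto \modulus{\lambda}$.

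Next I would identify both operators in the statement as instances of this calculus: using the composition (multiplicativity) property of the functional calculus applied to $\lambda \mapsto \lambda^k$ and to $\lambda \mapsto \modulus{\lambda}^k$, one gets $T^k = \phi_k(T)$ with $\phi_k(\lambda) = \lambda^k$ and $\modulus{T}^k = \psi_k(T)$ with $\psi_k(\lambda) = \modulus{\lambda}^k$, each with its natural maximal domain. With this in hand the conclusion is immediate: since $\modulus{\phi_k(\lambda)}^2 = \lambda^{2k} = \modulus{\psi_k(\lambda)}^2$ for every $\lambda \in \R$, the integrability condition $\int \modulus{\phi_k}^2\, d\mu_u < \infty$ defining $\dom(T^k)$ is literally the same as $\int \modulus{\psi_k}^2\, d\mu_u < \infty$ defining $\dom(\modulus{T}^k)$, whence $\dom(\modulus{T}^k) = \dom(T^k)$; and on this common domain $\norm{T^k u}^2 = \int \modulus{\phi_k}^2\, d\mu_u = \int \modulus{\psi_k}^2\, d\mu_u = \norm{\modulus{T}^k u}^2$.

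The one point that needs care, and which I expect to be the only real obstacle, is the identification of the $k$-th power — defined as the iterated composition $T \circ \cdots \circ T$ with its iterated domain — with $\phi_k(T)$; passing to the multiplication-operator form of the spectral theorem makes this transparent, so it is routine rather than deep. If one prefers to avoid invoking the full spectral theorem, an alternative is a parity argument: for even $k = 2j$ one has $\modulus{T}^{2j} = (\modulus{T}^2)^j = (T^2)^j = T^{2j}$ as operators, using $\modulus{T}^2 = T^2$; and for odd $k = 2j+1$ one combines the base case $\dom(\modulus{T}) = \dom(T)$ with $\norm{\modulus{T}u} = \norm{Tu}$ and the even case to match the domains and norms of $\modulus{T}^{2j+1} = \modulus{T}\,\modulus{T}^{2j}$ and $T^{2j+1} = T\, T^{2j}$.
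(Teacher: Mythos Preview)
Your proof is correct and, in fact, cleaner than the paper's. The paper argues in two separate steps: for the domain equality it introduces the bounded functions $f_1(\zeta) = \modulus{\zeta}^k/(\imath + \zeta^k)$ and $f_2(\zeta) = \zeta^k/(1 + \modulus{\zeta}^k)$ and observes that $f_1(T), f_2(T) \in \bddlf(\Hil)$, which forces $\dom(T^k) = \dom(\modulus{T}^k)$ with equivalent graph norms; then for the exact norm equality it runs the iterative commutation $\norm{T^k u} = \norm{\modulus{T} T^{k-1} u} = \norm{T^{k-1}\modulus{T} u} = \cdots = \norm{\modulus{T}^k u}$, invoking functional calculus to justify the commutation. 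Your route, by contrast, goes straight to the spectral measure and reduces both the domain and the norm statements simultaneously to the pointwise identity $\modulus{\lambda^k}^2 = \modulus{\modulus{\lambda}^k}^2$ on $\R$. This is more direct and conceptually transparent; the paper's approach is more in the spirit of the bounded holomorphic functional calculus set up earlier in that section, which in principle extends to non-self-adjoint bisectorial operators, but for the self-adjoint setting of the lemma itself your argument is the natural one. Your parity-based alternative also works and is close in flavour to the paper's iterative step for the norm.
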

\begin{proof}
Consider the functions 
$$f_1(\zeta) = \frac{\modulus{\zeta}^k}{\imath + \zeta^k},
\quad\text{and}\quad
f_2(\zeta) = \frac{\zeta^k}{1 + \modulus{\zeta}^k}.$$
Each such function is holomorphic, bounded, and hence, 
$f_1(T) = \modulus{T}^k(\imath \iden + T^k)^{-1} \in \bddlf(\Hil)$
and $f_2(T) = T^k(\iden + \modulus{T}^k)^{-1} \in \bddlf(\Hil)$. 
This shows that $\dom(T^k) = \dom(\modulus{T}^k)$ in graph norm.

To obtain the equivalence of norms, we note that by 
the self-adjointness of $T$, $\norm{Tu} = \norm{\modulus{T}u}$.
For higher powers, 
$$\norm{T^k u} = \norm{\modulus{T}T^{k-1} u} = \norm{T^{k-1}\modulus{T}u} = \dots = \norm{\modulus{T}^k u},$$
where the second equality follows from functional calculus.
\end{proof}

For the next lemma, we specialise to the 
operator in question.

\begin{lemma}
\label{Lem:HtoF}
Let $\Dir$ be an operator satisfying \ref{D:First}-\ref{D:Last}. 
If $(\Dir^l)_D$ is self-adjoint, then $\Dir_D$ is also self-adjoint.
\end{lemma}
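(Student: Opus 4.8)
The plan is to use the earlier-established equivalences from Theorem \ref{Thm:FirstMain} to reduce self-adjointness of $\Dir_D$ to the negligible-boundary condition \eqref{lneg} with $l = 1$, and to extract that from the self-adjointness of $(\Dir^l)_D$ together with the regularity axiom \ref{D:Reg}. Since $(\Dir^l)_D$ is self-adjoint, Theorem \ref{Thm:FirstMain} applied at level $l$ gives in particular $\dom^l_0(\Dir) = \dom^l(\Dir)$ and $(\Dir^l)_D = (\Dir^l)_N = \adj{(\Dir_c^l)}$. We want to conclude the analogous statement at level $1$: it suffices, by Theorem \ref{Thm:FirstMain} again (now at level $1$, which is legitimate since \ref{D:First}--\ref{D:Last} hold for $\Dir$), to verify that $\Dir$ satisfies $\inprod{\Dir u, v} = \inprod{u, \Dir v}$ for all $u, v \in \Sp_1(\Dir)$.

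First I would take $u, v \in \Sp_1(\Dir)$, so $u, v \in \Ck{\infty} \intersect \Lp{2}(\cV)$ with $\Dir u, \Dir v \in \Lp{2}(\cV)$. The obstruction to symmetry is a ``boundary term at infinity'', and the idea is to trade the single power of $\Dir$ for higher powers where we already control the boundary behaviour. Concretely, I would like to show $u, v \in \Sp_l(\Dir)$, or at least approximate them in the $\Dir$-graph norm by elements of $\Sp_l(\Dir)$, and then use \eqref{lneg} at level $l$ to push derivatives across. The cleanest route is via Lemma \ref{Lem:Core1} and the density results already in hand: since $(\Dir^l)_D$ is self-adjoint, Theorem \ref{Thm:FirstMain2}'s machinery (or directly Lemma \ref{Lem:Core1} applied to $T = \modulus{(\Dir^l)_D}$, using Lemma \ref{Lem:PowerMod}) shows $\Ck[c]{\infty}(\cV)$ is a core for $\modulus{(\Dir^l)_D}^{1/l}$. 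One then identifies $\modulus{(\Dir^l)_D}^{1/l}$ with $\modulus{\Dir_D}$ via functional calculus (this is precisely the step carried out abstractly in \secref{S:core}), so that $\Ck[c]{\infty}(\cV)$ is a core for $\modulus{\Dir_D}$, hence for $\Dir_D$. But $\Ck[c]{\infty}(\cV) \subset \dom^1_0(\Dir) \subset \Sp_1(\Dir) \subset \dom^1(\Dir)$, and since $\Dir_c$ and $\Dir_2$ are closable with the same closure whenever $\Ck[c]{\infty}(\cV)$ is a core for $\Dir_D = \close{\Dir_c}$, we get $\dom^1_0(\Dir) = \dom^1(\Dir)$, which is condition (i) of Theorem \ref{Thm:FirstMain} at level $1$ and therefore yields self-adjointness of $\Dir_D$.

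The main obstacle is the identification $\modulus{(\Dir^l)_D}^{1/l} = \modulus{\Dir_D}$ as self-adjoint operators, i.e. that the $l$-th power taken in the sense of closures of $\Dir^l_c$ agrees with the $l$-th power of the closure of $\Dir_c$. A priori $\close{\Dir_c^l}$ could be strictly smaller than $(\Dir_D)^l$, since composing closed operators need not yield a closed operator and $\Ck[c]{\infty}(\cV)$ need not be a core for $(\Dir_D)^l$. The resolution is exactly the content of the lemmas preceding this one together with Proposition \ref{Prop:DomMax}: self-adjointness of $(\Dir^l)_D$ forces $(\Dir^l)_D = \adj{(\Dir_c^l)} = (\Dir^l)_N$, and the spectral theorem applied to the self-adjoint operator $(\Dir^l)_D$ lets one define $\modulus{(\Dir^l)_D}^{1/l}$ unambiguously; one checks on the core $\Ck[c]{\infty}(\cV)$ that it acts as (the absolute value of) $\Dir$, and closedness of both operators upgrades agreement on a core to equality. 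Once this identification is secured, everything else is the bookkeeping above, and the symmetry $\inprod{\Dir u, v} = \inprod{u, \Dir v}$ on $\Sp_1(\Dir)$ follows, completing the proof that $\Dir_D$ is self-adjoint.
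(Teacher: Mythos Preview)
Your argument is circular. The identification $\modulus{(\Dir^l)_D}^{1/l} = \modulus{\Dir_D}$ that you isolate as ``the main obstacle'' is not merely a bookkeeping step: it already presupposes that $\Dir_D$ is self-adjoint. Indeed, the functional-calculus manipulation carried out in \secref{S:core} (which you invoke) proceeds by first applying Lemma~\ref{Lem:HtoF} to conclude that $\Dir_D$ is self-adjoint, then using that two self-adjoint operators with $(\Dir^l)_D \subset (\Dir_D)^l$ must coincide, and only then writing $\modulus{(\Dir^l)_D}^{\alpha} = \modulus{\Dir_D}^{l\alpha}$. Without self-adjointness of $\Dir_D$ in hand, $\modulus{\Dir_D}$ has no functional calculus to speak of (it is just $\sqrt{\adj{\Dir_D}\Dir_D}$ for a merely symmetric closed operator), and there is no mechanism to identify it with $\modulus{(\Dir^l)_D}^{1/l}$. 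Your suggestion to ``check on the core $\Ck[c]{\infty}(\cV)$ that it acts as $\modulus{\Dir}$'' does not work: $\modulus{(\Dir^l)_D}^{1/l}$ is defined spectrally, not pointwise, and does not act on smooth sections by any local formula.

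There is a second, independent gap in the final paragraph. Even granting all identifications, the conclusion ``$\Ck[c]{\infty}(\cV)$ is a core for $\Dir_D$, hence $\dom^1_0(\Dir) = \dom^1(\Dir)$'' is a non sequitur: $\Ck[c]{\infty}(\cV)$ is a core for $\Dir_D = \close{\Dir_c}$ \emph{by definition}, regardless of whether $\Dir_D$ is self-adjoint, so this cannot be the source of the equality $\dom^1_0(\Dir) = \dom^1(\Dir)$. The paper's proof avoids these issues entirely: it argues directly via the deficiency-index criterion, showing that any $v$ with $\adj{\Dir_D} v = \imath v$ would satisfy $(\Dir^l)_D v = \imath^l v$, contradicting self-adjointness of $(\Dir^l)_D$ for odd $l$; even $l$ are reduced to this case by repeated halving and an appeal to Bernau's lemma for square roots.
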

\begin{proof}
First, we consider the case that $l = 2$. The symmetry condition
\ref{D:Close} implies that $\Dir_D$
exists and $\Dir \subset \Dir_D$ so that $\Dir^2 \subset (\Dir_D)^2$. 
Since $(\Dir_D)^2$ is closed, $(\Dir^2)_D \subset (\Dir_D)^2$ and hence,
by Lemma 3 in \cite{Bernau} (with $T = \Dir_D$ and $S = (\Dir_D)^2$), 
$\Dir_D$ is self-adjoint. Thus $\Dir_D^2 = (\Dir_D)^2$.

Now, for $l = 2^m$, we replace $\Dir$ by $\Dir^{2^{m-1}}$ and
by this argument, we obtain that $(\Dir^{2^{m-1}})_D$ is 
self-adjoint whenever $(\Dir^{2^m})_D$ is self-adjoint. 
Repeating this procedure $m$-times, we obtain that $\Dir_D$ 
is self-adjoint.

Next, suppose that $l$ is an odd number. Assume for
contradiction that $\Dir_D$ is not self-adjoint. 
 
Then $\nul(\adj{\Dir}_D + \imath) \neq 0$ or $\nul(\adj{\Dir}_D - \imath) \neq 0$. 
Without loss of generality, we assume $\nul(\adj{\Dir}_D - \imath) \neq 0$ and 
we can find a non-zero vector $v \in \dom(\adj{\Dir}_D)$ such
that $\adj{\Dir}_D v = \adj{\Dir} v = \imath v$. 

By Proposition \ref{Prop:DomMax}, we have that
$\dom(\adj{\Dir}) = \dom(\Dir_N) = \dom_\infty(\Dir)$,
so $\Dir_N v = \imath v$ which implies that $v \in \dom(\Dir_N^a)$ 
for all $a \geq 1$. Invoking \ref{D:Close}, 
since $v \in \dom(\Dir_N^a)$ for all $a \leq l$,  
$$
\inprod{\Dir_N^l v, u} = \inprod{\Dir_N^{l-1} v, \Dir_c u} 
	= \dots = \inprod{v, \Dir_c^l u}$$
for all $u \in \Ck[c]{\infty}(\cV)$. That is, 
$v \in \dom(\adj{\Dir^l_c}) = \dom_\infty^l(\Dir)$. 
But $(\Dir^l)_D$ is self-adjoint, 
$\dom_\infty^l(\Dir) = \dom((\Dir^l)_D)$, and so we have
$(\Dir^l)_D = \Dir_N^l v = -\imath v$, which contradicts
that $(\Dir^l)_D$ is self-adjoint.

For a general $l$, write $l = a \cdot 2^b$ for $a$ odd,
we obtain that $\Dir_D$ is self-adjoint whenever $(\Dir^l)_D$
is self-adjoint.
\end{proof}

With the aid of this, we obtain a proof of Theorem \ref{Thm:FirstMain2}.

\begin{proof}[Proof of Theorem \ref{Thm:FirstMain2}]
First, $\Dir \subset \Dir_D$ yields that $\Dir^l \subset \Dir_D^l$
and so $(\Dir^l)_D \subset \Dir_D^l$. The operator $(\Dir^l)_D$ is self-adjoint
by hypothesis and $\Dir_D^l$ is self-adjoint since $\Dir_D$ is self-adjoint.
Then, by  Lemma \ref{Lem:HtoF}, we have that $(\Dir^l)_D = \Dir^l_D$. 

By Lemma \ref{Lem:Core1}, $\Ck[c]{\infty}(\cV)$
is a core for $\modulus{(\Dir^l)_D}^\alpha = \modulus{\Dir^l_D}^\alpha$.
On considering the functions
$$ f_1(\zeta) = \frac{1}{1 + (\modulus{\zeta}^l)^\alpha}
\quad\text{and}\quad
f_2(\zeta) = \frac{1}{1 + \modulus{\zeta}^{l\alpha}},$$
and observing that $f_1 = f_2$ is bounded holomorphic on $\C$,
we obtain 
$$(1 + (\modulus{\Dir_D}^l)^\alpha)^{-1} = f_1(\Dir) 
	= f_2(\Dir) =  (1 + \modulus{\Dir_D}^{^l\alpha})^{-1},$$
and hence, $(\modulus{\Dir_D}^l)^\alpha = \modulus{\Dir_D}^{l\alpha}.$
Thus, we have that $\Ck[c]{\infty}(\cV)$ is a core
for $\modulus{\Dir_D}^{l \alpha}$ for $\alpha \in [0,1]$. 

On setting $\alpha = k/l$ for integers
$k=1, \dots, l$,  by Lemma \ref{Lem:PowerMod}, we obtain that
$\Ck[c]{\infty}(\cV)$ is a core for $\Dir_D^k$.
\end{proof}
 
\section{Essential self-adjointness of powers in the complete setting} 
\label{S:Ess}

Throughout this section, we assume  that $\mg$ is a rough
metric  
that induces a \emph{complete length} space. 
We will denote the distance metric of this length space by $d$. 
Furthermore,  we assume 
the bundle $\cV$ is equipped with a bundle rough metric $\mh$.

As a background assumption,  we assume \ref{D:First}-\ref{D:Last} for
the operator $\Dir$ (i.e., for the case $l = 1$).

\begin{proposition}\label{lip_sym}
The formula 
$\langle \Dir_N u, v\rangle = \langle u, \Dir v\rangle$ 
holds whenever $u \in \dom(\Dir_N)$ and $v \in \Ck[c]{0,1}(\cM)$.
\end{proposition}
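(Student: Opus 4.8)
The plan is to deduce the claim from two facts already recorded in \S\ref{S:MainRes}: the symmetry relation $\inprod{\Dir_N u, w} = \inprod{u, \Dir_D w}$, valid for all $u \in \dom(\Dir_N)$ and $w \in \dom(\Dir_D)$, together with the inclusion $\Ck[c]{0,1}(\cV) \subset \dom(\Dir_D)$. Granting these, it remains only to verify that the closure $\Dir_D$ acts classically on a compactly supported locally Lipschitz section $v$, i.e. that $\Dir_D v = \Dir v$, where $\Dir v = A^i \partial_i v + B v$ denotes the almost-everywhere defined expression (this makes sense since $\Dir$ maps $\Ck{0,1}(\cV)$ into $\Lp[loc]{\infty}(\cV)$, and $\Dir v \in \Lp{2}(\cV)$ because $\spt v$ is compact). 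With this identification in hand, $\inprod{\Dir_N u, v} = \inprod{u, \Dir_D v} = \inprod{u, \Dir v}$, which is the assertion.

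To obtain $\Dir_D v = \Dir v$ I would argue by mollification. Cover $\spt v$ by finitely many precompact trivialising charts $(U_j, \psi_j)$, $j = 1, \dots, M$, take a smooth partition of unity $\set{\eta_j}$ subordinate to this cover, and write $v = \sum_j w_j$ with $w_j = \eta_j v \in \Ck[c]{0,1}(\cV)$ supported in $U_j$. Expressing $w_j$ in a fixed smooth frame over $U_j$ and mollifying the component functions, one produces $w_{j,\epsilon} \in \Ck[c]{\infty}(\cV)$ (supported in $U_j$ for $\epsilon$ small) with $w_{j,\epsilon} \to w_j$ and $\partial_i w_{j,\epsilon} \to \partial_i w_j$ in $\Lp{2}(U_j)$. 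Since $A^i, B \in \Lp{\infty}(\ol{U_j})$, Hölder's inequality gives $\Dir w_{j,\epsilon} = A^i \partial_i w_{j,\epsilon} + B w_{j,\epsilon} \to A^i \partial_i w_j + B w_j = \Dir w_j$ in $\Lp{2}(\cV)$. As $\Dir_c$ is closable with closure $\Dir_D$ and each $w_{j,\epsilon} \in \Ck[c]{\infty}(\cV) = \dom(\Dir_c)$, we conclude $w_j \in \dom(\Dir_D)$ with $\Dir_D w_j = \Dir w_j$; summing over $j$ and using linearity yields $v \in \dom(\Dir_D)$ with $\Dir_D v = \Dir v$.

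The one delicate point — essentially the only obstacle — is the passage to the limit $\Dir w_{j,\epsilon} \to \Dir w_j$ when the coefficients $A^i, B$ are merely locally bounded; this is exactly why one localises to a fixed precompact chart, on whose closure those coefficients are bounded, so that $\Lp{2}$-convergence of the mollifications closes the estimate via Hölder's inequality. Everything else is routine and is, in fact, already implicit in the mollification argument of \S\ref{S:MainRes}. Finally, note that completeness of the length space plays no role in this particular proposition; it is merely the standing assumption of the present section.
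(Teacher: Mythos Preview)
Your proof is correct and follows essentially the same route as the paper: reduce to showing $\Ck[c]{0,1}(\cV) \subset \dom(\Dir_D)$ with $\Dir_D v = \Dir v$, localise via a partition of unity into a precompact chart, mollify, and use boundedness of the coefficients $A^i, B$ on the chart closure to pass to the limit. Your version is in fact slightly cleaner than the paper's, which somewhat unnecessarily invokes \ref{D:Reg} to obtain the estimate $\norm{\Dir u}_U \lesssim \norm{\conn^U u}_U + \norm{u}_U$ (this direction needs only the local form $\Dir = A^i\partial_i + B$ with $\Lp{\infty}$ coefficients, exactly as you use it) and only shows the mollified sequence is Cauchy rather than explicitly identifying the limit as the classical expression $\Dir v$.
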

\begin{proof}
We note by \ref{D:Close} that $\inprod{\Dir u, v} = \inprod{u,\Dir_D v}$ for 
all $u \in \Sp_1$ and $v \in \dom(\Dir_D)$. Thus, it suffices to show
that $\Ck[c]{0,1}(\cV) \subset \dom(\Dir_D)$. For that, fix $v \in \Ck[c]{0,1}(\cV)$.
On fixing a smooth partition of unity subordinate to locally 
comparable precompact charts, we can further assume that $\spt v \subset U$,
where $U$ is a precompact open chart. Inside there, fix the flat
connection $\conn^U$, and hence, by \ref{D:Reg}, we can deduce that that whenever $\Dir u \in \Lp{2}(\cV)$
with $\spt u \subset U$,
implies that $u \in \Sob{1,2}(U, \cV)$ with the estimate
$\norm{\Dir u}_U \lesssim \norm{\conn^U u}_U + \norm{u}_U.$
Now, let $v_n = (J^{\frac{1}{n}} \convolve v^i) e_i$, where
$\set{e_i}$ is the frame in $U$ for which $\conn^F$ is flat
and $J^{\frac{1}{n}}$ is the standard symmetric mollifier
$J^\epsilon$ with $\epsilon = 1/n$.
Hence, we have that $\spt v_n \subset U$ for
$n$ large enough, and moreover, $v_n \in \Ck[c]{\infty}(\cV)$.
In particular, $\Dir v_n \in \Lp{2}(\cV)$ and moreover, $v_m \to v$ in $\Lp{2}(\cV)$.
 Thus,
$$ \norm{\Dir_D v_n - \Dir_D v_m} \lesssim \norm{\conn^U v_n - \conn^U v_m}_U + \norm{v_m - v_n}$$
and the right hand side tends to zero as $m,\ n \to \infty$. By the closedness
of $\Dir_D$, we have that $v \in \dom(\Dir_D)$.
 It is easy to see that this now extends for $u \in \dom(\Dir_N)$. 
\end{proof} 

When we apply this proposition in later parts,
we will be taking $u \in \Sp_k$ and considering the section $\eta u$, where $\eta \in \Ck[c]{\infty}(\cM)$. 
In this case we have that $\Dir^k(\eta u)$ is  $\Ck[c]{m - k}(\cV)$ and hence this lies
in $\dom(\Dir_D) \subset \dom(\Dir_N)$. 
 Thus, we can apply this proposition to this case.   

\subsection{Some preliminary constructions and remarks}\label{prelims}

 In \cite{Chernoff}, Chernoff defines  the  \emph{local velocity of propagation} by:  
\begin{equation*}
c(x) = \sup\{\modulus{\Mul_f(x)}_{\mathrm{op}} : f \in \Ck{\infty}(\cM), \vert \conn(f) (x)\vert_{\mg} = 1\},
\end{equation*}
and the \emph{velocity of propagation inside a ball} by
$c(r) = \esssup\set{c(x) : x \in B_r}.$ 
It is easy to see that $c(r)$ is non-negative and monotonically
increasing.

Since $\Dir = A^i \partial_i + B$ locally with $A_i \neq 0$ for some $i$,
we have that $c(r) \neq 0$  for all $r > 0$. 
Also, since the operator locally takes the form $A^i \partial_i + B$,
it is easy to see that $\Mul_f(x) u(x) =  (\partial_i f)(x)A^i u(x)$.
By covering the ball $B_r$ by a finite number of locally 
comparable precompact charts, and using that $A^i \in \Lp[loc]{\infty}$,
 we obtain: there exists $C_{B_r} < \infty$ such that 
 $\modulus{\Mul_f(x)} \leq C_{B_r} \modulus{\conn f(x)}$
for almost-every $x \in B_r$.
Therefore, for each $r > 0$, the velocity of propagation 
satisfies $c(r) < \infty$. 

If we take a smooth function  $f$,  we then find that for any smooth section $u$
\begin{equation}\label{point_mult_bound}
\vert \Mul_f(u)(x)\vert_h \leq \vert \conn(f)(x)\vert_{\mg} c(x)\vert u(x)\vert_h
\end{equation}
 for almost-every $x \in \cM$. 
Letting $\vert\vert \cdot \vert\vert_{B_{r}}$ denote the $\Lp{2}$ norm on the ball $B_r$, we  obtain 
\begin{equation}\label{L^2_mult_bound}
\vert\vert \Mul_f(u)\vert\vert_{B_{r}} \leq (\esssup\{\vert \conn(f)(x)\vert_{\mg} : x \in B_r\})\ c(r)\vert\vert 
u\vert\vert_{B_r}.
\end{equation}

The following construction is based on  Wolf's construction 
in  \S5 of  \cite{Wolf},  and adapted to the case of
rough metrics.

Fix a point $x_0$ in $\cM$, and for any point $x \in \cM$, 
let $\rho(x) := d(x, x_0)$, where $d$ denotes the
distance function  associated to the  length structure induced on $\cM$ by $\mg$. 
The triangle inequality shows us that 
$\vert \rho(y) - \rho(x) \vert \leq d(y,x)$ for any $x, y \in \cM$. Thus $\rho$ is  Lipschitz
and hence differentiable almost-everywhere. 
Inside a locally comparable 
chart $(U,\psi)$,  we have $C_U \geq 1$ such that
$\modulus{\conn \rho(x)}_{\mg(x)} \leq C_U$
for almost-every $x \in U$. 

For $r > 0$ we let $B_r$ denote the open ball centred about the fixed point $x_0$. 
By the assumption that our rough metric induces a complete length space,
we are able to apply the metric space version of the Hopf-Rinow theorem 
(see Proposition 3.7 in \cite{BR}) and obtain that  metric balls  $B_r$ are 
precompact.  As a consequence, on a ball 
$B_r$, there exists $C_{B_r} \in [1, \infty)$ such that
$\modulus{\conn\rho(x)} \leq C_{B_r}$ for almost-every $x \in B_r$. 
Define 
$$\tilde{d}(r) = \max (1,\ \esssup\set{ \modulus{\conn\rho(x)}: x \in B_r}),$$
and note that $\tilde{d}(r)$ is increasing in $r$. Also, 
define $e(r) = \tilde{d}(r)c(r)$ which is again an increasing 
function in $r$. 

Choose a smooth function $a : \R \rightarrow [0,1]$ such that $a(-\infty, 1] = 1$, $a[2,\infty) = 0$, and
such that $a$ is non-zero on  the interval   $(1,2)$.
Denote $M = \max\vert a'(t)\vert$. 
 For each $r > 0$, define  
$b_r : \cM \rightarrow [0,1]$ by 
$b_r(y) = a(\frac{\rho(y)}{e(r)r})$. Then $b_r = 1$ on $B_{e(r)r}$ and 
$\spt(b_r) \subseteq \overline{B}_{2e(r)r}$. 
 Furthermore  $b_r$ is non-zero on $B_{2e(r)r}$, and
since $B_{2e(r)r}$ is precompact, it follows that $b_r$ is compactly supported. The Lipschitz property
of $\rho$ implies that $b_r$ is Lipschitz  and  hence almost-everywhere differentiable. At points of differentiability, and points
where $\vert\cdot\vert_{\mg}$ is defined, we have
\begin{equation}\label{grad_bound_b_r}
\Big\vert \conn(b_r)\Big\vert^2_{\mg} = \Bigg\vert \frac{1}{e(r)r} a'\Bigg(\frac{\rho}{e(r)r}\Bigg)\conn(\rho)\Bigg\vert^2_{\mg} 
\leq \frac{M^2}{c(r)^2r^2}.
\end{equation}
We remind the reader that $\vert\cdot\vert_{\mg}$ is defined almost-everywhere so the above gradient bound holds almost-everywhere on $\cM$.
In particular,  $\esssup\set{ \modulus{\conn(b_r)(x)}: x \in \cM}$  
is bounded above by $M (c(r)r)^{-1}$.

Using this function $b_r$ in equation \eqref{L^2_mult_bound} we find that
\begin{equation}\label{L^2_bound_b_r}
\vert\vert \Mul_{b_r}(u)\vert\vert_{B_{2e(r)r}} \leq \frac{M}{c(r)r}c(r)\vert\vert u\vert\vert_{B_{2e(r)r}} 
= \frac{M}{r}\vert\vert u\vert\vert_{B_{2e(r)r}}.
\end{equation}

We will also be making use of the powers $b_r^k$ for $k \geq 1$. In this case we note that 
because $\conn(b_r^k) = kb_r^{k-1}\conn(b_r)$, we get the following estimate:
\begin{equation}\label{L^2_bound_b_r^k}
\vert\vert \Mul_{b_r^k}(u)\vert\vert_{B_{2e(r)r}} \leq \frac{kM}{r}\vert\vert b_r^{k-1}u\vert\vert_{B_{2e(r)r}}.
\end{equation}


\subsection{Essential self-adjointness of \Dir}

In this subsection we prove that $\Dir$ is essentially self-adjoint on $\Ck[c]{\infty}(\cV)$. The approach we will be taking is to
establish that $\Dir$ satisfies the negligible boundary condition.

\begin{theorem}\label{D_essen_self}
Let $\cV$ be a vector bundle with a bundle rough metric $\mh$ over a manifold $\cM$ with a rough 
metric $\mg$ inducing a complete length space. 
If the operator $\Dir$ satisfies \ref{D:First}-\ref{D:Last},
then it satisfies the negligible boundary condition \eqref{lneg}, 
and $\Dir$ on $\Ck[c]{\infty}(\cV)$ is essentially self-adjoint. 
\end{theorem}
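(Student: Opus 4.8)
The plan is to establish the negligible boundary identity \eqref{lneg} for $\Dir$ directly; once this is in hand, \thmref{Thm:FirstMain} (the equivalence of its items (iv) and (v)) immediately gives that $\Dir_c$ on $\Ck[c]{\infty}(\cV)$ is essentially self-adjoint. So fix $u, v \in \Sp_1(\Dir)$ and aim to prove $\inprod{\Dir u, v} = \inprod{u, \Dir v}$. The mechanism is the classical Gaffney-type cutoff argument, carried out with the compactly supported Lipschitz functions $b_r$ constructed in \secref{prelims}, whose gradients were rescaled precisely so that the local velocity-of-propagation factor $c(r)$ cancels in the error term.

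First I would note that $\Sp_1(\Dir) = \dom(\Dir_2) \subset \dom(\Dir_N)$, with $\Dir_N$ agreeing with $\Dir$ there, so \propref{lip_sym} applies to the pairing of $u$ against any compactly supported Lipschitz section. Since $v$ is smooth and $b_r$ is compactly supported and Lipschitz (its support $\overline{B}_{2e(r)r}$ being compact by the metric Hopf--Rinow theorem), we have $b_r v \in \Ck[c]{0,1}(\cV)$, and the almost-everywhere Leibniz rule for the local form $\Dir = A^i \partial_i + B$ gives $\Dir(b_r v) = b_r \Dir v + \Mul_{b_r}(v)$, with all three terms in $\Lp{2}(\cV)$. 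Applying \propref{lip_sym},
$$\inprod{\Dir u, b_r v} = \inprod{u, \Dir(b_r v)} = \inprod{u, b_r \Dir v} + \inprod{u, \Mul_{b_r}(v)}.$$
Now I would let $r \to \infty$. Since $0 \le b_r \le 1$, $b_r \to 1$ pointwise, and $\Dir u, v, u, \Dir v \in \Lp{2}(\cV)$, dominated convergence gives $\inprod{\Dir u, b_r v} \to \inprod{\Dir u, v}$ and $\inprod{u, b_r \Dir v} \to \inprod{u, \Dir v}$. For the commutator term, $\Mul_{b_r}(v)$ is supported in $\overline{B}_{2e(r)r}$, and the gradient bound \eqref{grad_bound_b_r} feeding the estimate \eqref{L^2_bound_b_r} --- which persists at the Lipschitz regularity of $b_r$ because $b_r$ is differentiable almost everywhere and $\Mul_{b_r}(x)v(x) = (\partial_i b_r)(x) A^i(x) v(x)$ a.e.\ --- yields
$$\modulus{\inprod{u, \Mul_{b_r}(v)}} \leq \norm{u}\, \norm{\Mul_{b_r}(v)}_{B_{2e(r)r}} \leq \frac{M}{r}\, \norm{u}\, \norm{v},$$
which tends to $0$ as $r \to \infty$. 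Combining the three limits gives \eqref{lneg}, and \thmref{Thm:FirstMain} concludes the proof.

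The essential content has been front-loaded into \secref{prelims}: the appeal to the metric Hopf--Rinow theorem forcing metric balls to be precompact (so that $c(r)$, $\tilde{d}(r)$, $e(r)$ are finite), and the precise scaling $b_r(y) = a\bigl(\rho(y)/(e(r)r)\bigr)$ that makes the $c(r)$ appearing in \eqref{L^2_mult_bound} cancel the $1/(c(r)r)$ in \eqref{grad_bound_b_r}. Given that apparatus, the only points demanding care are (a) that the Leibniz identity and the pointwise and $\Lp{2}$ multiplier bounds, stated for smooth $f$ in \secref{prelims}, go through for the merely Lipschitz $b_r$ acting on the smooth section $v$; and (b) that \propref{lip_sym} genuinely applies, i.e.\ $\Sp_1(\Dir) \subset \dom(\Dir_N)$. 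Neither is a real obstruction, so I expect the theorem to follow cleanly, the ``hardest'' step being simply the bookkeeping that the annular $\Lp{2}$-mass of $\Mul_{b_r}(v)$ decays like $1/r$.
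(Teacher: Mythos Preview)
Your argument is correct and mirrors the paper's own proof essentially step for step: apply Proposition~\ref{lip_sym} to the pairing $\inprod{\Dir u, b_r v}$, expand via the Leibniz/commutator identity, and pass to the limit $r\to\infty$ using dominated convergence on the main terms and the $M/r$ bound \eqref{L^2_bound_b_r} on the commutator, then invoke Theorem~\ref{Thm:FirstMain}. If anything you are slightly more careful than the paper in flagging that the multiplier estimates of \S\ref{prelims}, stated for smooth $f$, persist for the merely Lipschitz $b_r$.
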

\begin{proof}
We fix two arbitrary sections $u, v \in \Sp_1$, what we need to prove is that
\begin{equation*}
\langle \Dir u, v \rangle = \langle u, \Dir v \rangle.
\end{equation*}
In order to do this, we consider the truncated section $b_rv$ for some $r > 0$. This section is 
 Lipschitz with  compact support. We then have that
\begin{equation*}
\langle \Dir u, b_rv \rangle = \langle u, \Dir(b_rv)\rangle = \langle u, \Mul_{b_r}v \rangle + \langle u, b_r\Dir v\rangle
\end{equation*}
where the first equality comes from Proposition \ref{lip_sym}, and the second from 
 the fact that  $\Mul_{b_r} = [\Dir, b_r \iden]$. 

From \eqref{point_mult_bound} and \eqref{L^2_bound_b_r} we see that $\Mul_{b_r}v \rightarrow 0$ as $r \rightarrow \infty$. 
Furthermore, $b_ru \rightarrow u$ in $\Lp{2}$ as 
$r \rightarrow \infty$ by the dominated convergence theorem, and
similarly for $b_r\Dir v$. This implies that if we let $r \rightarrow \infty$ in the above equality, we obtain
\begin{equation*}
\langle \Dir u, v \rangle = \langle u, \Dir v \rangle.
\end{equation*} 
Since $u, v$ were arbitrary sections it follows that such an equality holds for all $u, v \in \Sp_1$, which
is the negligible boundary condition \eqref{lneg}. The conclusion follows from 
Theorem \ref{Thm:FirstMain}. 
\end{proof}


\subsection{Essential self-adjointness of $\Dir^k$}
\label{S:Ess-k}

In this subsection we will show that the powers 
$\Dir^k$ on $\Ck[c]{\infty}(\cV)$, for $1 \leq k \leq m+1$, are essentially self-adjoint
when the operator $\Dir$ has $\Ck{m}$ coefficients.  The case $k = 1$ holds even 
for operators with $\Lp[loc]{\infty}$ coefficients and  
was obtained in Theorem \ref{D_essen_self}.
 We do not expect the essential self-adjointness to hold in 
general for $\Dir^k$ with $k > m+1$ since 
even for smooth, compactly supported sections $u$,
we can only make sense of $\Dir^k u$ distributionally. 

Our approach will be based on certain local estimates over a ball of radius $r$. We will then
show that these local estimates prolong to global estimates on the whole manifold by taking the radius $r$
to infinity. These global estimates  allow us to prove the  negligible boundary condition, and from 
our previous work, establish essential self-adjointness. This idea of using local estimates
 to  establish  essential self-adjointness of an operator is inspired by Wolf's work on the Dirac operator in 
\cite{Wolf}.
More specifically, in Proposition 6.2 in \cite{Wolf}, Wolf establishes a certain global estimate of 
$\vert\vert \Dir(u)\vert\vert$ in terms of $\vert\vert \Dir^2(u)\vert\vert$ and $\vert\vert u\vert\vert$ and this, along
with some other facts, allows him to conclude the essential self-adjointness of $\Dir^2$, where $\Dir$ is the Dirac 
operator. We will show that when $u \in \Sp_{k+1}$ one can bound $\vert\vert \Dir^k(u)\vert\vert$ in terms
of  $\vert\vert \Dir^{k+1}(u)\vert\vert$ and $\vert\vert u\vert\vert$.  This implies that 
$\Sp_{k+1} \subseteq \Sp_k$, which
we will then exploit to prove the negligible boundary condition \eqref{lneg} for  $\Dir^{k+1}$.  
We should also mention that Proposition 6.2 in \cite{Wolf} is based on the method of 
Andreotti-Vesentini   (see \S6 in \cite{andreotti}). 

In addition to the assumptions of Theorem \ref{D_essen_self}, 
for the remainder of this section, we assume that $\Dir$
is a $\Ck{m}$ coefficient operator satisfying \ref{D:First}-\ref{D:Last} for $\Dir^k$
 with  $2 \leq k \leq m+1$.

\begin{proposition}\label{Dir_est}
For a section $u \in \Sp_2$, we have that for any $t_1, t_2 > 0$ that
\begin{equation}\label{Dir_glob_est}
\vert\vert \Dir(u)\vert\vert^2 \leq \frac{1}{2t_2}\vert\vert \Dir^2(u)\vert\vert^2 + 
\Big(\frac{t_1}{2} + \frac{t_2}{2}\Big)\vert\vert u\vert\vert^2.
\end{equation}
In particular, $\Dir(u) \in \Lp{2}(\cV)$ which implies $u \in \Sp_1$.
\end{proposition}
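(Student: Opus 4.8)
The plan is to exploit the family of Lipschitz cut-offs $b_r$ from \S\ref{prelims}, exactly as in the proof of Theorem \ref{D_essen_self}, but now carrying out one integration by parts. Fix $u \in \Sp_2$; since $u$ is smooth, $\Dir u$ and $\Dir^2 u$ are classical sections, $\Dir^2 u \in \Lp{2}(\cV)$ by hypothesis, and $\Dir u$ is locally $\Lp{2}$. For fixed $r$, the section $b_r^2 u$ is Lipschitz with compact support, so we may pair $\Dir^2 u$ against it. To integrate by parts we localise: choose $\eta \in \Ck[c]{\infty}(\cM)$ with $\eta \equiv 1$ on a neighbourhood of $\spt b_r$, so that $\eta \Dir u \in \Ck[c]{m}(\cV) \subseteq \Ck[c]{0,1}(\cV) \subseteq \dom(\Dir_N)$ and $\Dir_N(\eta \Dir u) = \Dir^2 u$ on $\spt b_r$ (the derivatives of $\eta$ vanish there). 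Then Proposition \ref{lip_sym} applied with $w = \eta \Dir u$ and $v = b_r^2 u$, together with $\eta \equiv 1$ on the support of $\Dir(b_r^2 u)$, gives
$$ \langle \Dir^2 u, b_r^2 u \rangle = \langle \Dir_N(\eta\Dir u), b_r^2 u \rangle = \langle \eta \Dir u, \Dir(b_r^2 u) \rangle = \langle \Dir u, \Dir(b_r^2 u) \rangle. $$
Writing $\Dir(b_r^2 u) = b_r^2 \Dir u + \Mul_{b_r^2} u$ and using that $\Mul_f$ depends fibrewise-linearly on the first derivatives of $f$, whence $\Mul_{b_r^2} u = 2 b_r \Mul_{b_r} u$ almost everywhere, produces the key identity
$$ \| b_r \Dir u \|^2 = \langle \Dir^2 u, b_r^2 u \rangle - 2\langle b_r \Dir u, \Mul_{b_r} u \rangle. $$

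Next I would extract from this a bound on $\| b_r \Dir u \|$ uniform in $r$. Cauchy--Schwarz and $0 \le b_r \le 1$ give $|\langle \Dir^2 u, b_r^2 u\rangle| \le \|\Dir^2 u\|\,\|u\|$, while \eqref{L^2_bound_b_r} gives $\|\Mul_{b_r} u\| \le \frac{M}{r}\|u\|$, so for $r \ge 1$ the quantity $X_r := \| b_r \Dir u \|$ satisfies $X_r^2 \le \|\Dir^2 u\|\,\|u\| + 2M\|u\|\,X_r$, a quadratic inequality bounding $X_r$ by a constant independent of $r$. Since $b_r \to 1$ pointwise as $r \to \infty$, Fatou's lemma yields $\|\Dir u\|^2 \le \liminf_{r} \| b_r \Dir u\|^2 < \infty$; hence $\Dir u \in \Lp{2}(\cV)$, which (with $u$ smooth and in $\Lp{2}$) means $u \in \Sp_1$, the last assertion of the proposition.

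With $\Dir u \in \Lp{2}(\cV)$ known, I would pass to the limit $r \to \infty$ in the key identity: $b_r^2 u \to u$ and $b_r \Dir u \to \Dir u$ in $\Lp{2}$ by dominated convergence, and $|2\langle b_r \Dir u, \Mul_{b_r} u\rangle| \le \frac{2M}{r}\|\Dir u\|\,\|u\| \to 0$ by \eqref{L^2_bound_b_r}. Therefore $\|\Dir u\|^2 = \langle \Dir^2 u, u\rangle \le \|\Dir^2 u\|\,\|u\|$, and Young's inequality with parameter $t_2$ gives $\|\Dir u\|^2 \le \frac{1}{2t_2}\|\Dir^2 u\|^2 + \frac{t_2}{2}\|u\|^2$; adding the nonnegative term $\frac{t_1}{2}\|u\|^2$ yields \eqref{Dir_glob_est} for all $t_1, t_2 > 0$. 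The parameter $t_1$ is inessential at this stage, but is kept for uniformity with the corresponding estimate for higher powers established below.

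The step I expect to be the main obstacle is justifying the key identity. A priori we do not know $\Dir u \in \Lp{2}(\cV)$ — that is exactly what we are proving — so $\Dir u$ need not lie in $\dom(\Dir_N)$ and the Green-type formula \ref{D:Close}/Proposition \ref{lip_sym} cannot be applied directly with $\Dir u$ in a slot; the role of the auxiliary cut-off $\eta$ is precisely to replace $\Dir u$ by the compactly supported Lipschitz section $\eta \Dir u$, to which Proposition \ref{lip_sym} does apply, $\eta$ then being removed because it equals $1$ wherever $b_r$ lives. A secondary point, already handled by the constructions of \S\ref{prelims}, is that even though $c(r) \to \infty$, the gradient bound \eqref{grad_bound_b_r} carries the compensating factor $c(r)^{-1}$, so the commutator error $\Mul_{b_r} u$ is $O(1/r)$ and does not interfere with the uniform-in-$r$ estimate.
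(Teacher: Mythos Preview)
Your proof is correct and follows the same architecture as the paper's: the same cut-offs $b_r$, the same localisation with an auxiliary $\eta$ equal to $1$ on $\spt b_r$, and the same integration by parts via Proposition~\ref{lip_sym} to produce the identity $\|b_r\Dir u\|^2 = \langle b_r^2\Dir^2 u, u\rangle + (\text{commutator term})$.

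The difference is in how the estimate is extracted. The paper applies Young's inequality \emph{before} the limit, with the parameter $t_1$ absorbing the commutator term $\langle \Mul_{b_r^2}(\Dir u), u\rangle$; this yields the local estimate \eqref{D_loc_est} with left coefficient $C_1(r,t_1)=1-\tfrac{2M^2}{t_1 r^2}$, and then $r\to\infty$ gives \eqref{Dir_glob_est} directly. You instead first bound $X_r=\|b_r\Dir u\|$ via a quadratic inequality to obtain a uniform-in-$r$ bound, invoke Fatou to conclude $\Dir u\in\Lp{2}$, and only then pass to the limit to get the exact identity $\|\Dir u\|^2=\langle\Dir^2 u,u\rangle$, after which a single Young step with $t_2$ suffices and $t_1$ is visibly superfluous. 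Your route is slightly cleaner for this proposition in isolation. The paper's formulation, however, is chosen so that the \emph{local} estimate \eqref{D_loc_est} with its explicit coefficient $C_1(r,t_1)$ can be fed directly into the induction for higher powers (Proposition~\ref{Dir^2_loc_prop} and Proposition~\ref{Dir^k_loc}), where, as noted in Remark~\ref{rem_Dir_est}, it is the local form valid for arbitrary smooth $u$ that is substituted, not the global one. Your argument proves the proposition but does not produce \eqref{D_loc_est} in the form the subsequent induction consumes; you would need to recover it separately.
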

\begin{proof}
We will be using the  functions $b_r$ as constructed in \S\ref{prelims}. 
 We start  by proving the following:

If $t_1, t_2 > 0$ are given then there exists $r_1 = r_1(t_1)$ such that for all $r \geq r_1$
\begin{equation}\label{D_loc_est}
\Big(1 - \frac{2M^2}{t_1r^2}\Big)\vert\vert b_r\Dir(u)\vert\vert^2_{B_{2e(r)r}} 
\leq \frac{1}{2t_2}\vert\vert b_r^2\Dir^2(u)\vert\vert^2_{B_{2e(r)r}}  + 
\Big(\frac{t_1}{2} + \frac{t_2}{2}\Big)\vert\vert u\vert\vert^2_{B_{2e(r)r}} 
\end{equation}
and $\Big(1 - \frac{2M^2}{t_1r^2}\Big) > 0$.

 Fix a smooth section $u$ of $\cV$. Consider the compactly supported section $b_r^2u$. 
Since $b_r$ is
differentiable almost-everywhere, at points of differentiability, we have the following formula:
\begin{equation*}
\Dir(b_r^2u) = \Mul_{b_r^2}(u) + b_r^2\Dir(u).
\end{equation*}
We then fix a smooth compactly supported function $\eta$ such that $\eta = 1$ on $B_{3e(r)r}$ and $\eta = 0$ outside of 
$B_{4e(r)r}$. We have
\begin{equation*}
\vert\vert b_r\Dir(\eta u)\vert\vert^2 = \langle b_r\Dir(\eta u), b_r\Dir(\eta u) \rangle 
= \langle b_r^2\Dir(\eta u), \Dir(\eta u) \rangle = \langle \Dir(b_r^2\Dir(\eta u)), \eta u \rangle 
\end{equation*}
where we have used Proposition \ref{lip_sym} to get the last equality, noting that $b_r^2\Dir(\eta u) \in \Ck[c]{0,1}(M)$.
As $b_r^2$ has support contained in $\overline{B}_{2e(r)r}$, we have that the above inner products are zero on the
complement of $\overline{B}_{2e(r)r}$. This implies, using the fact that $\eta = 1$ on $B_{3e(r)r}$, that
\begin{equation*}
\vert\vert b_r\Dir(u)\vert\vert^2_{B_{2e(r)r + \epsilon}} = \langle \Dir(b_r^2\Dir(u)), u \rangle_{B_{2e(r)r + \epsilon}}
\end{equation*}
for every $\epsilon > 0$ sufficiently small. Therefore it must be true on the open ball $B_{2e(r)r}$, so we find
\begin{equation}\label{est_b_rD_1}
\begin{aligned}
\vert\vert b_r\Dir(u)\vert\vert^2_{B_{2e(r)r}} &= \langle \Dir(b_r^2\Dir(u)), u \rangle_{B_{2e(r)r}} 
\\&= \langle \Mul_{b_r^2}(\Dir u), u \rangle_{B_{2e(r)r}} + \langle b_r^2\Dir^2(u), u \rangle_{B_{2e(r)r}}.
\end{aligned}
\end{equation}
Using Cauchy-Schwarz and the estimate \eqref{L^2_bound_b_r^k}, for any $t_1 > 0$ we obtain the bound
\begin{equation}\label{est_b_rD_2}
\vert\langle \Mul_{b_r^2}(\Dir u), u \rangle_{B_{2e(r)r}}\vert \leq  
\frac{2M^2}{t_1r^2}\vert\vert b_r\Dir(u)\vert\vert^2_{B_{2e(r)r}} 
+ \frac{t_1}{2}\vert\vert u\vert\vert^2_{B_{2e(r)r}}.
\end{equation}
For any $t_2 > 0$, the Cauchy-Schwarz inequality also implies
\begin{equation}\label{est_b_rD_3}
\vert \langle b_r^2\Dir^2(u), u \rangle_{B_{2r}}\vert \leq 
\frac{1}{2t_2}\vert\vert b_r^2\Dir^2(u)\vert\vert^2_{B_{2r}} + \frac{t_2}{2}\vert\vert u\vert\vert^2_{B_{2r}}
\end{equation}
Using \eqref{est_b_rD_2} and \eqref{est_b_rD_3} in \eqref{est_b_rD_1} we obtain
\begin{equation*}
\vert\vert b_r\Dir(u)\vert\vert^2_{B_{2e(r)r}} \leq \frac{2M^2}{t_1r^2}\vert\vert b_r\Dir(u)\vert\vert^2_{B_{2e(r)r}} + 
\frac{1}{2t_2}\vert\vert b_r^2\Dir^2(u)\vert\vert^2_{B_{2e(r)r}} +
\Big(\frac{t_1}{2} + \frac{t_2}{2}\Big)\vert\vert u\vert\vert^2_{B_{2e(r)r}},
\end{equation*} 
which we can simplify to 
\begin{equation*}
\Big(1 - \frac{2M^2}{t_1r^2}\Big)\vert\vert b_r\Dir(u)\vert\vert^2_{B_{2e(r)r}} \leq 
\frac{1}{2t_2}\vert\vert b_r^2\Dir^2(u)\vert\vert^2_{B_{2e(r)r}} +
\Big(\frac{t_1}{2} + \frac{t_2}{2}\Big)\vert\vert u\vert\vert^2_{B_{2e(r)r}}.
\end{equation*} 
By choosing $r$ large enough we can find a $r_1 = r_1(t_1)$ such that $\Big(1 - \frac{2M^2}{t_1r^2}\Big) > 0$
for any $r \geq r_1(t_1)$, and hence we have proved \eqref{D_loc_est}.

Using the fact that $b_r \leq 1$ and $b_r = 1$ on $B_{r}$ we obtain
\begin{equation*}
\Big(1 - \frac{2M^2}{t_1r^2}\Big)\vert\vert \Dir(u)\vert\vert^2_{B_r} \leq 
\frac{1}{2t_2}\vert\vert \Dir^2(u)\vert\vert^2_{B_{2r}} +
\Big(\frac{t_1}{2} + \frac{t_2}{2}\Big)\vert\vert u\vert\vert^2_{B_{2r}}
\end{equation*}
for any $r \geq r_1$. Letting $r \rightarrow \infty$ then establishes \eqref{Dir_glob_est} by the monotone convergence
theorem.
\end{proof}

Proposition \ref{Dir_est} immediately allows us to prove that $\Dir^2$ must be essentially self-adjoint.

\begin{proposition}\label{Dir^2_essen_self}
$\Dir^2$ is essentially self-adjoint on $\cM$.
\end{proposition}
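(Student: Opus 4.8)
The plan is to obtain Proposition~\ref{Dir^2_essen_self} from Theorem~\ref{Thm:FirstMain} by checking the negligible boundary condition \eqref{lneg} for $\Dir^{2}$, i.e.\ $\inprod{\Dir^{2}u,v}=\inprod{u,\Dir^{2}v}$ for all $u,v\in\Sp_2$. The essential new ingredient that makes this feasible is Proposition~\ref{Dir_est}: for $u,v\in\Sp_2$ it furnishes $\Dir u,\Dir v\in\Lp{2}(\cV)$, hence $\Sp_2\subseteq\Sp_1$; combined with $\Dir^{2}u\in\Lp{2}(\cV)$, this places $\Dir u$ in $\dom^{1}_\infty(\Dir)=\dom(\Dir_N)$ via Proposition~\ref{Prop:DomMax} (and likewise $u,v,\Dir v\in\dom(\Dir_N)$). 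Since we are in the regime $m\ge 1$, the coefficients of $\Dir$ are at least $\Ck{1}$, so $\Dir v\in\Ck{1}(\cV)\subset\Ck{0,1}_{\mathrm{loc}}(\cV)$; consequently, for the cut-off functions $b_r$ of \S\ref{prelims}, both $b_r^{2}v$ and $b_r^{2}\Dir v$ lie in $\Ck[c]{0,1}(\cV)$.

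With this preparation I would carry out two successive symmetry moves through Proposition~\ref{lip_sym}. Applying it to $\Dir u\in\dom(\Dir_N)$ tested against $b_r^{2}v\in\Ck[c]{0,1}(\cV)$, and expanding $\Dir(b_r^{2}v)=\Mul_{b_r^{2}}v+b_r^{2}\Dir v$, gives $\inprod{\Dir^{2}u,b_r^{2}v}=\inprod{\Dir u,\Mul_{b_r^{2}}v}+\inprod{\Dir u,b_r^{2}\Dir v}$; applying it a second time to $u\in\dom(\Dir_N)$ tested against $b_r^{2}\Dir v\in\Ck[c]{0,1}(\cV)$, and expanding $\Dir(b_r^{2}\Dir v)=\Mul_{b_r^{2}}(\Dir v)+b_r^{2}\Dir^{2}v$, rewrites the last term. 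The upshot is
\begin{equation*}
\inprod{\Dir^{2}u,b_r^{2}v}=\inprod{\Dir u,\Mul_{b_r^{2}}v}+\inprod{u,\Mul_{b_r^{2}}(\Dir v)}+\inprod{u,b_r^{2}\Dir^{2}v}.
\end{equation*}

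Finally I would let $r\to\infty$. Because $e(r)r\to\infty$ and $0\le b_r\le 1$ with $b_r=1$ on $B_{e(r)r}$, dominated convergence gives $b_r^{2}v\to v$ and $b_r^{2}\Dir^{2}v\to\Dir^{2}v$ in $\Lp{2}(\cV)$, so the left-hand side converges to $\inprod{\Dir^{2}u,v}$ and the final term on the right to $\inprod{u,\Dir^{2}v}$. The two commutator terms vanish: \eqref{L^2_bound_b_r^k} (with $k=2$, together with $b_r\le 1$) yields $\norm{\Mul_{b_r^{2}}v}\le\tfrac{2M}{r}\norm{v}$ and $\norm{\Mul_{b_r^{2}}(\Dir v)}\le\tfrac{2M}{r}\norm{\Dir v}$, so Cauchy--Schwarz against $\Dir u\in\Lp{2}(\cV)$ and $u\in\Lp{2}(\cV)$ respectively sends both to $0$. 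This establishes \eqref{lneg} for $\Dir^{2}$, and Theorem~\ref{Thm:FirstMain} then gives that $\Dir^{2}$ on $\Ck[c]{\infty}(\cV)$ is essentially self-adjoint. I expect the only genuinely delicate point to be precisely the inclusion $\Sp_2\subseteq\Sp_1$ already secured by Proposition~\ref{Dir_est} — without the a priori bound on $\norm{\Dir u}$ one cannot even make sense of the term $\inprod{\Dir u,\Mul_{b_r^{2}}v}$, let alone control it in the limit; the remaining steps are routine manipulations with the cut-offs $b_r$ and two applications of Proposition~\ref{lip_sym}.
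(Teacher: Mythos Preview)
Your argument is correct, but it does more work than necessary compared with the paper's proof. Both proofs rest on the same crucial ingredient, namely Proposition~\ref{Dir_est} giving $\Sp_2\subseteq\Sp_1$. From there, the paper simply invokes Theorem~\ref{D_essen_self} (negligible boundary for $\Dir$, equivalently self-adjointness of $\Dir_N$) twice: for $u,v\in\Sp_2$ one has $u,v,\Dir u,\Dir v\in\dom(\Dir_N)$, so $\inprod{\Dir^{2}u,v}=\inprod{\Dir u,\Dir v}=\inprod{u,\Dir^{2}v}$ in one line. You instead bypass Theorem~\ref{D_essen_self} and redo the cut-off computation by hand, inserting $b_r^{2}$, applying Proposition~\ref{lip_sym} twice, and sending $r\to\infty$; in effect you are re-proving the relevant instance of Theorem~\ref{D_essen_self} inside this argument. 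What your route buys is explicitness: you make transparent exactly where the regularity $\Dir v\in\Ck{0,1}_{\mathrm{loc}}$ (from $m\ge 1$) is used, and you never need $\Dir u$ to be smooth, only in $\dom(\Dir_N)$. What the paper's route buys is brevity and modularity, since the limiting argument with $b_r$ has already been packaged once and for all into Theorem~\ref{D_essen_self}.
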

\begin{proof}
The above Proposition \ref{Dir_est} implies that $\Sp_2 \subseteq \Sp_1$. Furthermore, Theorem \ref{D_essen_self}
implies that $\Dir$ satisfies the negligible boundary condition \eqref{lneg} on $\cM$. Therefore, if we take $u, v \in \Sp_2$ we have
\begin{equation*}
\langle \Dir^2u, v\rangle = \langle \Dir u, \Dir v\rangle = \langle u, \Dir^2v\rangle,
\end{equation*}
which is the negligible boundary condition \eqref{lneg} for $\Dir^2$.  From  Theorem \ref{Thm:FirstMain} 
it follows that $\Dir^2$ is essentially
self-adjoint on $\cM$.  
\end{proof}

\begin{remark}\label{rem_Dir_est}
We observe that part of the hypothesis of Proposition \ref{Dir_est} involved assuming that our section
$u \in \Sp_2$. This was done so that in the \emph{global} estimate \eqref{Dir_glob_est}, we knew that the right
hand side was finite, hence this gives  us that
 the left hand side is finite. 
In the case of the \emph{local} estimate
\eqref{D_loc_est}, the assumption that $u \in \Sp_2$ is not needed.
It suffices to assume that $u$ is smooth (in fact $\Ck{2}$ is enough). 
This is because we can  
fix a smooth compactly supported function $\eta$ taking
the value $\eta = 1$ on $B_{3e(r)r}$ and which  vanishes outside of 
$B_{4e(r)r}$. Then,  $\eta u \in \Sp_2$ and we can use the fact 
that $\eta u = u$ on $B_{2e(r)r}$ where the local estimate holds. 
We will see that this is a crucial observation 
in the argument to obtain local estimates for higher $\Dir^ku$.
\end{remark}

 In Proposition  \ref{Dir_est}, in order to obtain our required global estimate 
\eqref{Dir_glob_est}, 
we first proved an estimate on the ball $B_{2e(r)r}$ before taking a limit 
$r \rightarrow \infty$.  In order to prove the essential self-adjointness of higher
powers $\Dir^{k+1}$ for $k \geq 2$, we will proceed along the same lines. Our goal will therefore be to obtain a
similar bound on $\vert\vert \Dir^{k}(u)\vert\vert^2$ in terms of 
$\vert\vert \Dir^{k+1}(u)\vert\vert^2$ and $\vert\vert u\vert\vert^2$, 
over  balls of sufficiently large radius. 
We will give the details for the case $k = 2$, which involves some slight modifications of the above $k = 1$ case,
and then we will show how to do the general case via induction.

We start by proving the following local estimate.

\begin{proposition}\label{Dir^2_loc_prop}
For $u \in \Sp_3$, given any $t_1, t_3, t_4 > 0$ we can choose $r$ and $t_2$ sufficiently large so that 
\begin{enumerate}[(i)] 
\item $C_1(r, t_1) = \Big(1 - \frac{2M^2}{t_1r^2}\Big) > 0$, and

\item $C_2(r, t_1, t_2, t_3, t_4) = 
\Big(1 - \frac{8M^2}{t_3r^2} - \Big(\frac{t_3}{2} + \frac{t_4}{2}\Big)C_1(r, t_1)^{-1}\frac{1}{2t_2}\Big) > 0.$
\end{enumerate}
Moreover, the following estimate holds: 
\begin{equation}\label{Dir^2_loc_est}
\begin{aligned}
C_2\vert\vert b_r^2\Dir^2(u)\vert\vert^2_{B_{2e(r)r}} &\leq 
\frac{1}{2t_4}\vert\vert b_r^3\Dir^3(u)\vert\vert^2_{B_{2e(r)r}}  \\
&\qquad\qquad+
\Big(\frac{t_1}{2} + \frac{t_2}{2}\Big)\Big(\frac{t_3}{2} + \frac{t_4}{2}\Big)C_1^{-1}\vert\vert 
u\vert\vert^2_{B_{2e(r)r}}.
\end{aligned}
\end{equation}
\end{proposition}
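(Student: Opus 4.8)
The plan is to reproduce, one order higher, the derivation of the local estimate \eqref{D_loc_est} carried out inside the proof of Proposition~\ref{Dir_est}, and then to feed \eqref{D_loc_est} \emph{itself} back into the computation in order to absorb the residual first-order term that appears. As in Remark~\ref{rem_Dir_est}, it suffices to establish \eqref{Dir^2_loc_est} for a fixed smooth section $u$: fixing a smooth compactly supported $\eta$ with $\eta\equiv 1$ on $B_{3e(r)r}$ and $\spt\eta\subseteq\overline{B}_{4e(r)r}$, the sections $\eta u$, $\Dir(\eta u)$, $\Dir^2(\eta u)$ are locally Lipschitz with compact support and $\Dir^3(\eta u)$ is continuous with compact support (since $\Dir$ has $\Ck{m}$ coefficients with $m\geq 2$ in this part), so each of them lies in $\Ck[c]{0,1}(\cV)\subseteq\dom(\Dir_N)$, respectively in $\Lp{2}(\cV)$, as required; moreover $\eta u=u$, hence $\Dir^2(\eta u)=\Dir^2 u$, on $B_{3e(r)r}\supseteq\overline{B}_{2e(r)r}\supseteq\spt b_r$, so passing from $u$ to $\eta u$ changes nothing inside $B_{2e(r)r}$.

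First I would integrate by parts: starting from $\vert\vert b_r^2\Dir^2 u\vert\vert^2_{B_{2e(r)r}}=\langle\Dir^2 u,\,b_r^4\Dir^2 u\rangle_{B_{2e(r)r}}=\langle\Dir(\Dir(\eta u)),\,b_r^4\Dir^2(\eta u)\rangle$, applying Proposition~\ref{lip_sym} (legitimate since $\Dir(\eta u)\in\dom(\Dir_N)$ and $b_r^4\Dir^2(\eta u)\in\Ck[c]{0,1}(\cV)$), expanding $\Dir(b_r^4\Dir^2 u)=\Mul_{b_r^4}(\Dir^2 u)+b_r^4\Dir^3 u$, and restricting to $B_{2e(r)r}$ exactly as in the proof of Proposition~\ref{Dir_est} (the integrands vanish off $\overline{B}_{2e(r)r}$ and $\eta\equiv 1$ there), I obtain the analogue of \eqref{est_b_rD_1}:
\[
\vert\vert b_r^2\Dir^2 u\vert\vert^2_{B_{2e(r)r}}=\langle\Dir u,\,\Mul_{b_r^4}(\Dir^2 u)\rangle_{B_{2e(r)r}}+\langle b_r\Dir u,\,b_r^3\Dir^3 u\rangle_{B_{2e(r)r}}.
\]
The second term is controlled by Cauchy--Schwarz and Young's inequality with parameter $t_4$, giving $\tfrac{1}{2t_4}\vert\vert b_r^3\Dir^3 u\vert\vert^2_{B_{2e(r)r}}+\tfrac{t_4}{2}\vert\vert b_r\Dir u\vert\vert^2_{B_{2e(r)r}}$. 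For the first I would use the elementary pointwise identity $\Mul_{b_r^4}(w)=\tfrac43\,b_r\,\Mul_{b_r^3}(w)$ (which follows from $\Mul_f(w)(x)=(\partial_i f)(x)A^i w(x)$), Cauchy--Schwarz, the bound \eqref{L^2_bound_b_r^k} with $k=3$, and Young's inequality with parameter $t_3$, yielding
\[
\vert\langle\Dir u,\,\Mul_{b_r^4}(\Dir^2 u)\rangle_{B_{2e(r)r}}\vert\leq\frac{4M}{r}\,\vert\vert b_r\Dir u\vert\vert_{B_{2e(r)r}}\,\vert\vert b_r^2\Dir^2 u\vert\vert_{B_{2e(r)r}}\leq\frac{t_3}{2}\vert\vert b_r\Dir u\vert\vert^2_{B_{2e(r)r}}+\frac{8M^2}{t_3 r^2}\vert\vert b_r^2\Dir^2 u\vert\vert^2_{B_{2e(r)r}}.
\]
Collecting these bounds gives
\[
\vert\vert b_r^2\Dir^2 u\vert\vert^2_{B_{2e(r)r}}\leq\frac{8M^2}{t_3 r^2}\vert\vert b_r^2\Dir^2 u\vert\vert^2_{B_{2e(r)r}}+\Big(\frac{t_3}{2}+\frac{t_4}{2}\Big)\vert\vert b_r\Dir u\vert\vert^2_{B_{2e(r)r}}+\frac{1}{2t_4}\vert\vert b_r^3\Dir^3 u\vert\vert^2_{B_{2e(r)r}}.
\]

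The decisive step is then to eliminate the residual term $\vert\vert b_r\Dir u\vert\vert^2_{B_{2e(r)r}}$: I invoke the already-established local estimate \eqref{D_loc_est} (valid for smooth $u$ and for $r\geq r_1(t_1)$, with $C_1=C_1(r,t_1)=1-\tfrac{2M^2}{t_1 r^2}>0$), which bounds $\vert\vert b_r\Dir u\vert\vert^2_{B_{2e(r)r}}$ by $C_1^{-1}\big[\tfrac{1}{2t_2}\vert\vert b_r^2\Dir^2 u\vert\vert^2_{B_{2e(r)r}}+(\tfrac{t_1}{2}+\tfrac{t_2}{2})\vert\vert u\vert\vert^2_{B_{2e(r)r}}\big]$. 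Substituting this and transferring all $\vert\vert b_r^2\Dir^2 u\vert\vert^2_{B_{2e(r)r}}$ contributions to the left produces precisely the coefficient $C_2(r,t_1,t_2,t_3,t_4)=1-\tfrac{8M^2}{t_3 r^2}-(\tfrac{t_3}{2}+\tfrac{t_4}{2})C_1^{-1}\tfrac{1}{2t_2}$ and the right-hand side of \eqref{Dir^2_loc_est}. To secure (i) and (ii), given $t_1,t_3,t_4>0$ I would first choose $r$ large enough that $r\geq r_1(t_1)$, $C_1(r,t_1)\geq\tfrac12$ (so (i) holds and $C_1^{-1}\leq 2$) and $\tfrac{8M^2}{t_3 r^2}<\tfrac14$; then, with $r$ (hence $C_1$) fixed, choose $t_2$ large enough that $(\tfrac{t_3}{2}+\tfrac{t_4}{2})\cdot 2\cdot\tfrac{1}{2t_2}<\tfrac14$, whence $C_2>\tfrac12>0$.

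The substantive difficulty is purely bookkeeping: distributing the powers of $b_r$ among the commutators via identities of the form $\Mul_{b_r^{j}}(w)=\tfrac{j}{j-1}\,b_r\,\Mul_{b_r^{j-1}}(w)$ and tuning the Young parameters $t_2,t_3,t_4$ so that, once \eqref{D_loc_est} is fed in, every $\vert\vert b_r^2\Dir^2 u\vert\vert^2$ term can be moved to the left with a coefficient that is manifestly positive after $r$ and $t_2$ are taken large; no analytic input beyond \eqref{D_loc_est}, Proposition~\ref{lip_sym} and the gradient bounds of \S\ref{prelims} is needed. This computation is also the base case of the induction on $k$ announced right after the proposition: at level $k$ one writes $\vert\vert b_r^k\Dir^k u\vert\vert^2=\langle\Dir^{k-1}u,\,\Dir(b_r^{2k}\Dir^k u)\rangle$, splits off $\Mul_{b_r^{2k}}(\Dir^k u)$ as above, and closes up by invoking the already-proven estimate at level $k-1$ to absorb the resulting $\vert\vert b_r^{k-1}\Dir^{k-1}u\vert\vert^2$ term.
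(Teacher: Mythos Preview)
Your proposal is correct and follows essentially the same route as the paper: integrate by parts via Proposition~\ref{lip_sym} after localising with $\eta$, split off the commutator term $\Mul_{b_r^4}(\Dir^2 u)$ and the top-order term $b_r^4\Dir^3 u$, estimate each by Cauchy--Schwarz/Young, then absorb the residual $\|b_r\Dir u\|^2_{B_{2e(r)r}}$ by feeding in the already-established local estimate \eqref{D_loc_est}. The only cosmetic difference is that where you use the pointwise identity $\Mul_{b_r^{4}}=\tfrac{4}{3}\,b_r\,\Mul_{b_r^{3}}$ to extract a factor of $b_r$, the paper instead divides and multiplies by $b_r$ (using that $b_r$ has no zeros on $B_{2e(r)r}$) and applies \eqref{L^2_bound_b_r^k} with $k=4$; both manipulations produce the identical bound $\tfrac{8M^2}{t_3 r^2}\|b_r^2\Dir^2 u\|^2+\tfrac{t_3}{2}\|b_r\Dir u\|^2$, and the parameter selection at the end (the paper sets $t_2=\lambda_2(t_3+t_4)$, you fix $r$ first and then take $t_2$ large) is equivalent.
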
 
\begin{proof}
We start by estimating $\vert\vert b_r^2\Dir^2(u)\vert\vert^2_{B_{2e(r)r}}$. 
As in the proof
of Proposition \ref{Dir_est} we fix a  smooth 
compactly supported function $\eta$ such that $\eta = 1$ on $B_{3e(r)r}$ and $\eta = 0$ outside of 
$B_{4e(r)r}$. We then have
\begin{align*}
\langle b_r^2\Dir^2(\eta u), b_r^2\Dir^2(\eta u) \rangle
&= \langle \Dir(b_r^4\Dir^2(\eta u), \Dir(\eta u)\rangle \\
&= \langle \Mul_{b_r^4}(\Dir^2(\eta u)), \Dir(\eta u)\rangle + \langle b_r^4\Dir^3(\eta u), \Dir(\eta u)\rangle \\
&= \langle \Mul_{b_r^4}(\Dir^2(\eta u)), \Dir(\eta u)\rangle + \langle b_r^3\Dir^3(\eta u), b_r\Dir(\eta u)\rangle
\end{align*}
where we have used Proposition \ref{lip_sym} to obtain the first equality 
 (noting that $\Dir(\eta u) \in \Ck[c]{m-1}(E)$, hence is in $\dom(\Dir_D)$), 
and the fact that at points of differentiability of $b_r^4$,
$\Dir(b_r^4\Dir^2(\eta u)) = \Mul_{b_r^4}(\Dir^2(\eta u) + b_r^4\Dir^3(\eta u)$. Since $b_r$ has support contained
$\overline{B}_{2e(r)r}$ and $\eta = 1$ on $B_{3e(r)r}$, the above gives
\begin{equation*}
\langle b_r^2\Dir^2(u), b_r^2\Dir^2(u) \rangle_{B_{2e(r)r + \epsilon}} =
\langle \Mul_{b_r^4}(\Dir^2(u)), \Dir(u)\rangle + \langle b_r^3\Dir^3(u), b_r\Dir(u)\rangle_{B_{2e(r)r + \epsilon}}
\end{equation*}
for any $\epsilon > 0$ sufficiently small. By taking $\epsilon \rightarrow 0$ we obtain
\begin{equation*}
\langle b_r^2\Dir^2(u), b_r^2\Dir^2(u) \rangle_{B_{2e(r)r}} =
\langle \Mul_{b_r^4}(\Dir^2(u)), \Dir(u)\rangle{B_{2e(r)r}} + \langle b_r^3\Dir^3(u), b_r\Dir(u)\rangle_{B_{2e(r)r}}.
\end{equation*}

Applying Cauchy-Schwarz, the estimate \eqref{L^2_bound_b_r^k}, 
and the fact that $b_r$ has no zeroes in $B_{2e(r)r}$, we obtain
\begin{align*}
\big\vert \langle \Mul_{b_r^4}(\Dir^2(u)), \Dir(u)\rangle_{B_{2e(r)r}}\big\vert &=
\Big\vert \Big\langle \frac{1}{b_r}\Mul_{b_r^4}(\Dir^2(u)), b_r\Dir(u)\Big\rangle_{B_{2e(r)r}}\Big\vert \\
&\leq \frac{8M^2}{t_3r^2}\Big\vert\Big\vert \frac{b_r^3}{b_r}\Dir^2(u)\Big\vert\Big\vert^2_{B_{2e(r)r}} + 
\frac{t_3}{2}\vert\vert b_r\Dir(u)\vert\vert^2_{B_{2e(r)r}} \\
&= \frac{8M^2}{t_3r^2}\vert\vert b_r^2\Dir^2(u)\vert\vert^2_{B_{2e(r)r}} + 
\frac{t_3}{2}\vert\vert b_r\Dir(u)\vert\vert^2_{B_{2e(r)r}}.
\end{align*}
Similarly we get the bound  
\begin{equation*}
\big\vert \langle b_r^3\Dir^3(u), b_r\Dir(u)\rangle_{B_{2e(r)r}}\big\vert \leq 
\frac{1}{2t_4}\vert\vert b_r^3\Dir^3(u)\vert\vert^2_{B_{2e(r)r}}
+ \frac{t_4}{2}\vert\vert b_r\Dir(u)\vert\vert^2_{B_{2e(r)r}}.
\end{equation*} 
Using these two estimates we get
\begin{multline*}
\vert\vert b_r^2\Dir^2(u)\vert\vert^2_{B_{2e(r)r}} 
\leq \frac{8M^2}{t_3r^2}\vert\vert b_r^2\Dir^2(u)\vert\vert^2_{B_{2e(r)r}} +
\Big(\frac{t_3}{2} + \frac{t_4}{2}\Big)\vert\vert b_r\Dir(u)\vert\vert^2_{B_{2e(r)r}} \\
+ \frac{1}{2t_4}\vert\vert b_r^3\Dir^3(u)\vert\vert^2_{B_{2e(r)r}}.
\end{multline*}
 
We now want to plug in our local estimate for the $\vert\vert b_r\Dir(u)\vert\vert^2_{B_{2e(r)r}}$ term
occurring on the right. We remind the reader that at first sight  this seems impossible. 
In obtaining a local estimate for $\Dir(u)$ we assumed $u \in \Sp_2$, and our hypothesis at this point is that
$u \in \Sp_3$.  However, we do not know, a priori, that $u \in \Sp_2$ and in fact, this is what we will prove when we obtain the 
associated \emph{global} estimate.  However, as we observed in Remark \ref{rem_Dir_est}, we actually do
not  require
$u$ to be in $\Sp_2$ for the \emph{local} estimate.  It suffices
to ask that $u$ be smooth. 
 Thus, on substituting the  local estimate \eqref{D_loc_est} into the above inequality, we obtain: 
\begin{align*}
\vert\vert b_r^2\Dir^2(u)\vert\vert^2_{B_{2e(r)r}} 
\leq \frac{8M^2}{t_3r^2}&\vert\vert b_r^2\Dir^2(u)\vert\vert^2_{B_{2e(r)r}}  \\
&+
\Big(\frac{t_3}{2} + \frac{t_4}{2}\Big)\Big(1 - \frac{2M^2}{t_1r^2}\Big)^{-1}\frac{1}{2t_2}\vert\vert 
b_r^2\Dir^2(u)\vert\vert^2_{B_{2e(r)r}} \\ 
&+ \Big(\frac{t_1}{2} + \frac{t_2}{2}\Big)\Big(\frac{t_3}{2} + \frac{t_4}{2}\Big)
\Big(1 - \frac{2M^2}{t_1r^2}\Big)^{-1}\vert\vert u\vert\vert^2_{B_{2e(r)r}}  \\
&+ \frac{1}{2t_4}\vert\vert b_r^3\Dir^3(u)\vert\vert^2_{B_{2e(r)r}}.
\end{align*}
We can re-write this to give
\begin{align*}
\Bigg(1 - \frac{8M^2}{t_3r^2} &- \Big(\frac{t_3}{2} + \frac{t_4}{2}\Big)C_1^{-1}\frac{1}{2t_2}\Bigg)\vert\vert 
 b_r^2\Dir^2(u)\vert\vert^2_{B_{2e(r)r}}  \\ &\leq   
\frac{1}{2t_4}\vert\vert b_r^3\Dir^3(u)\vert\vert^2_{B_{2e(r)r}} +
\Big(\frac{t_1}{2} + \frac{t_2}{2}\Big)\Big(\frac{t_3}{2} + \frac{t_4}{2}\Big)
C_1^{-1}\vert\vert u\vert\vert^2_{B_{2e(r)r}},
\end{align*}
where we remind the reader that $C_1(r, t_1) = \Big(1 - \frac{2M^2}{t_1r^2}\Big)$.

We are now left with showing that we can choose $r$ and $t_2$ sufficiently large so that the coefficients in the
estimate are positive. It is easy to see that for large $r$ we have $C_1 > 0$. We take 
$t_2 = \lambda_2(t_3 + t_4)$, where $\lambda_2$ is a parameter,  and observe  that by taking $r$ 
and $\lambda_2$ sufficiently large, we can make the coefficient $C_2$ on the left hand side positive.
This finishes the proof.
\end{proof}

Observe that if we take $r \rightarrow \infty$, 
we have that $C_1 \rightarrow 1$, and 
$C_2 \rightarrow 1 - \Big(\frac{t_3}{2} + \frac{t_4}{2}\Big)\frac{1}{2t_2}$. So by choosing $t_2$ appropriately,
 $1 - \cbrac{\frac{t_3}{2} + \frac{t4}{2}}\frac{1}{2t_2}$  
can always be made greater than zero. Therefore by taking $r \rightarrow \infty$ in the above local estimate, we
obtain the following global estimate. 
\begin{theorem}
For $u \in \Sp_3$, given any $t_1, t_3, t_4 > 0$ we can choose $t_2$ sufficiently large so that 
$1 - \Big(\frac{t_3}{2} + \frac{t_4}{2}\Big)\frac{1}{2t_2}  > 0$, and 
\begin{equation}\label{Dir^2_est}
\Bigg(1 - \Big(\frac{t_3}{2} + \frac{t_4}{2}\Big)\frac{1}{2t_2} \Bigg)
\vert\vert \Dir^2(u)\vert\vert^2 \leq 
\frac{1}{2t_4}\vert\vert \Dir^3(u)\vert\vert^2 +
\Big(\frac{t_1}{2} + \frac{t_2}{2}\Big)\Big(\frac{t_3}{2} + \frac{t_4}{2}\Big)\vert\vert 
u\vert\vert^2.
\end{equation}
In particular, $\vert\vert \Dir^2(u)\vert\vert < \infty$, and hence $\Sp_3 \subseteq \Sp_2$.
\end{theorem}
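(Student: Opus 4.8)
The plan is to pass to the limit $r \to \infty$ in the local estimate \eqref{Dir^2_loc_est} of Proposition \ref{Dir^2_loc_prop}, in the same spirit as the passage from \eqref{D_loc_est} to \eqref{Dir_glob_est} in Proposition \ref{Dir_est}. Fix $t_1, t_3, t_4 > 0$. The first step is to choose $t_2$ large enough that
$$ C_2^\infty := 1 - \Big(\frac{t_3}{2} + \frac{t_4}{2}\Big)\frac{1}{2t_2} > 0 ; $$
any $t_2 > \frac14(t_3 + t_4)$ will do. Applying Proposition \ref{Dir^2_loc_prop} with this $t_2$, there is an $r_0$ such that for every $r \geq r_0$ the constants $C_1 = C_1(r,t_1)$ and $C_2 = C_2(r,t_1,t_2,t_3,t_4)$ are strictly positive and \eqref{Dir^2_loc_est} holds for our smooth section $u \in \Sp_3$.

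Next I would turn \eqref{Dir^2_loc_est} into an estimate on a fixed ball with an $r$-independent right-hand side. Since $e(r) \geq 1$ we have $b_r \equiv 1$ on $B_{e(r)r} \supseteq B_r$, so for any fixed $r' > 0$ and every $r \geq \max(r_0, r')$,
$$ \|b_r^2\Dir^2(u)\|^2_{B_{2e(r)r}} \geq \|\Dir^2(u)\|^2_{B_{r'}} , $$
the latter being finite because $\Dir^2(u) \in \Ck{m-1}(\cV)$ (as $\Dir$ has $\Ck{m}$ coefficients, so $\Dir^2(u)$ is in particular continuous). On the right-hand side of \eqref{Dir^2_loc_est}, $b_r \leq 1$ and $\spt b_r \subseteq \overline{B}_{2e(r)r}$ give $\|b_r^3\Dir^3(u)\|^2_{B_{2e(r)r}} \leq \|\Dir^3(u)\|^2$ and $\|u\|^2_{B_{2e(r)r}} \leq \|u\|^2$, both finite since $u \in \Sp_3 \subseteq \Lp{2}(\cV)$ with $\Dir^3(u) \in \Lp{2}(\cV)$. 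Hence, for $r \geq \max(r_0, r')$,
$$ C_2\,\|\Dir^2(u)\|^2_{B_{r'}} \leq \frac{1}{2t_4}\|\Dir^3(u)\|^2 + \Big(\frac{t_1}{2} + \frac{t_2}{2}\Big)\Big(\frac{t_3}{2} + \frac{t_4}{2}\Big) C_1^{-1}\|u\|^2 . $$

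Finally I would let $r \to \infty$ with $r'$ held fixed: then $C_1 \to 1$ and $C_2 \to C_2^\infty$, so the displayed inequality becomes
$$ C_2^\infty\,\|\Dir^2(u)\|^2_{B_{r'}} \leq \frac{1}{2t_4}\|\Dir^3(u)\|^2 + \Big(\frac{t_1}{2} + \frac{t_2}{2}\Big)\Big(\frac{t_3}{2} + \frac{t_4}{2}\Big)\|u\|^2 $$
for every $r'$; letting $r' \to \infty$ and invoking the monotone convergence theorem, so that $\|\Dir^2(u)\|^2_{B_{r'}} \uparrow \|\Dir^2(u)\|^2$, yields \eqref{Dir^2_est}. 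Since $C_2^\infty > 0$ and the right-hand side is finite, $\Dir^2(u) \in \Lp{2}(\cV)$; as $u$ is smooth and lies in $\Lp{2}(\cV)$, this is precisely the statement $u \in \Sp_2$, so $\Sp_3 \subseteq \Sp_2$. The computational content is entirely in the local estimate already proved, so there is no real obstacle here; the only point requiring care is the order of limits — one cannot let $r \to \infty$ directly in $C_2\|\Dir^2(u)\|^2_{B_r}$ and identify the limit with $C_2^\infty\|\Dir^2(u)\|^2$, since a priori $\|\Dir^2(u)\|$ might be infinite while $C_2$ still varies with $r$; restricting first to a fixed ball $B_{r'}$ (where finiteness is automatic) and only afterwards letting $r' \to \infty$ circumvents this, exactly as in Proposition \ref{Dir_est}.
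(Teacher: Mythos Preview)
Your proposal is correct and follows the same approach as the paper, which simply observes that $C_1 \to 1$ and $C_2 \to 1 - \big(\tfrac{t_3}{2} + \tfrac{t_4}{2}\big)\tfrac{1}{2t_2}$ as $r \to \infty$ and passes to the limit in the local estimate \eqref{Dir^2_loc_est}. Your version is more explicit about the order of limits (first restricting to a fixed ball $B_{r'}$ before sending $r \to \infty$, then letting $r' \to \infty$), which is a careful way to handle the a priori unknown finiteness of $\|\Dir^2(u)\|$; the paper's one-line passage to the limit is in the spirit of the single-limit argument already used in Proposition \ref{Dir_est}.
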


The above theorem can now be used to establish essential self-adjointness of $\Dir^3(u)$.
\begin{proposition}\label{Dir^3_essen_self}
$\Dir^3$ is essentially self-adjoint.
\end{proposition}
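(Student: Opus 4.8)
The plan is to lift the argument for essential self-adjointness of $\Dir^2$ in Proposition \ref{Dir^2_essen_self} one level higher: I will verify the negligible boundary condition \eqref{lneg} for $\Dir^3$ and then invoke Theorem \ref{Thm:FirstMain}. The only genuinely new input is the inclusion $\Sp_3 \subseteq \Sp_2$, which has just been established through the global estimate \eqref{Dir^2_est}; combined with Proposition \ref{Dir_est} it gives $\Sp_3 \subseteq \Sp_2 \subseteq \Sp_1$, and from here the proof is formal, relying only on the already-known essential self-adjointness of $\Dir$ (Theorem \ref{D_essen_self}) and of $\Dir^2$ (Proposition \ref{Dir^2_essen_self}).

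First I would fix $u, v \in \Sp_3$ and record, using $\Sp_3 \subseteq \Sp_2 \subseteq \Sp_1$, that all of $u, v, \Dir u, \Dir v, \Dir^2 u, \Dir^2 v, \Dir^3 u, \Dir^3 v$ lie in $\Lp{2}(\cV)$. Since $u, v$ are smooth and, for $\Dir^3$, the operator has $\Ck{m}$ coefficients with $m \geq 2$, the intermediate sections $\Dir^2 u$ and $\Dir v$ are genuine (at least $\Ck{1}$) sections whose distributional $\Dir$-images agree with the classical ones; hence, by Proposition \ref{Prop:DomMax} together with the self-adjointness of $\Dir_D$ and $(\Dir^2)_D$, we have $\Dir^2 u, v \in \dom(\Dir_D) = \dom^1_\infty(\Dir)$ and $u, \Dir v \in \dom((\Dir^2)_D) = \dom^2_\infty(\Dir)$.

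The core computation is then the two-step identity
\begin{equation*}
\langle \Dir^3 u, v\rangle = \langle \Dir^2 u, \Dir v\rangle = \langle u, \Dir^3 v\rangle,
\end{equation*}
in which the first equality transfers one factor of $\Dir$ across using the self-adjointness of $\Dir_D$ (negligible boundary for $\Dir$) applied to $\Dir^2 u$ and $v$, and the second transfers the remaining $\Dir^2$ using the self-adjointness of $(\Dir^2)_D$ (negligible boundary for $\Dir^2$) applied to $u$ and $\Dir v$. This is precisely \eqref{lneg} for $\Dir^3$. Since $\Dir^3$ satisfies \ref{D:First}--\ref{D:Last} by the standing hypotheses of this subsection, Theorem \ref{Thm:FirstMain} then upgrades this to essential self-adjointness of $\Dir^3$ on $\Ck[c]{\infty}(\cV)$.

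I do not anticipate a serious obstacle: the analytic work --- the bootstrapped passage from the local estimates to the global estimate \eqref{Dir^2_est}, and hence to $\Sp_3 \subseteq \Sp_2$ --- is already done. What remains is bookkeeping: verifying at each application of negligible boundary that the auxiliary sections $\Dir^2 u$ and $\Dir v$ actually lie in the appropriate maximal domain, which is exactly the role played by the chain of inclusions $\Sp_3 \subseteq \Sp_2 \subseteq \Sp_1$.
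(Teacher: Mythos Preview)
Your proposal is correct and follows essentially the same approach as the paper: use the inclusions $\Sp_3 \subseteq \Sp_2 \subseteq \Sp_1$ to verify negligible boundary for $\Dir^3$, then invoke Theorem~\ref{Thm:FirstMain}. The only cosmetic difference is the order in which factors are moved across---the paper writes $\langle \Dir^3 u, v\rangle = \langle \Dir u, \Dir^2 v\rangle = \langle u, \Dir^3 v\rangle$ whereas you split as $\langle \Dir^2 u, \Dir v\rangle$ in the middle---but both routes are justified by the same domain memberships, and your more explicit bookkeeping of which sections lie in which maximal domain is, if anything, slightly more careful than the paper's.
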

\begin{proof}
The above theorem shows us that $\Sp_3 \subseteq \Sp_2$, and we know from Proposition \ref{Dir^2_essen_self} 
that $\Sp_2 \subseteq \Sp_1$. Therefore, if we take $u, v \in \Sp_3$ then
\begin{equation*}
\langle \Dir^3u, v\rangle = \langle \Dir u, \Dir^2v\rangle = \langle u, \Dir^3v \rangle 
\end{equation*}
which is the negligible boundary condition \eqref{lneg} for $\Dir^3$.  It follows from Theorem \ref{Thm:FirstMain} 
that $\Dir^3$ is essentially self-adjoint. 
\end{proof}

The general case of proving essential self-adjointness of $\Dir^{k+1}$ proceeds 
along  similar  lines.
In obtaining a local estimate for $\Dir^k(u)$, we will meet constants of the form
$C_k(r, t_1, t_2, \ldots, t_{2k-1}, t_{2k})$ that will depend on  a given  set of  positive numbers
$t_1, t_3, \ldots , t_{2k-1}, t_{2k}$, and constants $C_{k-1}(r, t_1, \ldots , t_{2(k-1)})$ that arise
from the local estimate for $\Dir^{k-1}(u)$. The goal will be to show that we can pick these constants to be
positive,  by making a suitable choice for $t_2, t_4, \ldots , t_{2(k-1)} > 0$.  
For example, in the case of obtaining a local estimate
for $\Dir^3(u)$ in terms of $\Dir^4(u)$ and $u$, the constant $C_3(r, t_1, t_2, t_3, t_4, t_5, t_6)$ takes the form
\begin{equation*}
C_3(r, t_1, t_2, t_3, t_4, t_5, t_6) = \Bigg(1 - \frac{18M^2}{t_5r^2} - \Big(\frac{t_5}{2} + \frac{t_6}
{2}\Big)C_2^{-1} \frac{1}{2t_4}\Bigg),
\end{equation*}
where we remind the reader that 
$$C_2(r, t_1, t_2, t_3, t_4) = 
\Big(1 - \frac{8M^2}{t_3r^2} - \Big(\frac{t_3}{2} + \frac{t_4}{2}\Big)C_1(r, t_1)^{-1}\frac{1}{2t_2}\Big),$$
 and $C_1(r, t_1) = \Big(1 - \frac{2M^2}{t_1r^2}\Big)$. The type of question we will be faced  with is: given
$t_1, t_3, t_5, t_6$ can we choose $r, t_2, t_4$ so that $C_i > 0$ for $1 \leq i \leq 3$? 
The idea would be to choose $r$ and $t_4$ sufficiently large so that $C_3 > 0$. The problem is that
as we vary $t_4$ the constant $C_2$ also changes, so there is  a non triviality that 
we need to prove.  The way in which we  proceed is to generalise the argument demonstrating 
how  $C_2$ can be made positive.  We define
$t_4 = \lambda_4(t_5 + t_6)$, where $\lambda_4$ is  a parameter to be chosen later.  We then define
$t_2 = \lambda_2(t_3 + t_4) = \lambda_2(t_3 + \lambda_4(t_5 + t_6))$, and substitute these into the formulas
for $C_2$ and $C_3$ to obtain
\begin{equation*}
C_2 = \Big(1 - \frac{8M^2}{t_3r^2} - C_1^{-1}\frac{1}{4\lambda_2}\Big),\quad
C_3 =  \Bigg(1 - \frac{18M^2}{t_5r^2} - C_2^{-1}\frac{1}{2\lambda_4}\Bigg). 
\end{equation*}
In the above formula for $C_3$, we can see that the $C_2^{-1}$ term does not depend on $\lambda_4$. 
Therefore, we  start
by choosing $r$ sufficiently large so that $C_1 > 0$.  Then we   choose $\lambda_2$ and $r$ sufficiently large
so that $C_2 > 0$, and finally choose $\lambda_4$ and $r$ sufficiently large so that $C_3 > 0$.

The general case of proving that the constants that come out can always be made positive follows in a similar
fashion. We will now prove a lemma that shows how to do this.

\begin{lemma}\label{constants}
Let $C_1(r, t_1) = 1 - \frac{2M^2}{t_1r^2}$, and for $i \geq 2$ recursively define the functions
\begin{equation*}
C_i(r, t_1, \ldots , t_{2i-1}, t_{2i}) 
= 1 - \frac{2i^2M^2}{t_{2i-1}r^2} - \Big(\frac{t_{2i-1}}{2} + \frac{t_{2i}}{2}\Big)
C_{i-1}(r, t_1, \ldots , t_{2(i-1)})^{-1}\frac{1}{2t_{2(i-1)}}. 
\end{equation*}
Then the functions $C_i$ for $i \geq 2$ satisfy the following two conditions.
\begin{itemize}
\item[(i)] $C_i(r, t_1, \lambda_2(t_3 + t_4), \ldots , \lambda_{2i-2}(t_{2i-1} + t_{2i}), t_{2i-1}, t_{2i})$
(where we insert $\lambda_{2j}(t_{2j+1} + t_{2j+2})$ in the 2j position $1 \leq j < i$) is independent of
$t_4, t_6, \ldots t_{2i-2}, t_{2i}$, and

\item[(ii)] $C_i(r, t_1, \lambda_2(t_3 + t_4), \ldots , \lambda_{2i-2}(t_{2i-1} + t_{2i}), t_{2i-1}, t_{2i}) > 0$ 
for $\lambda_2, \lambda_4, \ldots ,\lambda_{2i-2}, r$ sufficiently large.
\end{itemize}
In particular, given any positive $t_1, t_3, \ldots , t_{2i-1}, t_{2i}$ we can make
$C_i(r, t_1, \ldots , t_{2i-1}, t_{2i}) > 0$ for $t_2, t_4, \ldots , t_{2i-2}$ and $r$ sufficiently large.
\end{lemma}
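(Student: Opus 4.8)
The plan is to prove (i) and (ii) simultaneously by induction on $i$, strengthening the inductive hypothesis so that it closes. Specifically, I would show: for each $i\ge 1$ the \emph{reduced function}
$$
\tilde C_i := C_i\bigl(r,\, t_1,\, \lambda_2(t_3+t_4),\, \dots,\, \lambda_{2i-2}(t_{2i-1}+t_{2i}),\, t_{2i-1},\, t_{2i}\bigr)
$$
depends only on $r$, on the odd-indexed variables $t_1, t_3,\dots,t_{2i-1}$, and on $\lambda_2,\dots,\lambda_{2i-2}$ (so, in particular, it is independent of $t_4,t_6,\dots,t_{2i-2},t_{2i}$), and for every $\epsilon\in(0,1)$ there exist $R,L>0$, depending on $\epsilon$ and on the fixed positive reals $t_1,t_3,\dots,t_{2i-1}$, with $\tilde C_i\ge 1-\epsilon$ whenever $r\ge R$ and $\lambda_2,\dots,\lambda_{2i-2}\ge L$. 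The base case $i=1$ is immediate, since $\tilde C_1=C_1=1-2M^2/(t_1r^2)\to 1$ as $r\to\infty$ and carries no $\lambda$'s.

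For the inductive step I would use two elementary observations. First, substituting $t_{2i-2}=\lambda_{2i-2}(t_{2i-1}+t_{2i})$ into the last term of the recursion collapses the factor that carries $t_{2i}$:
$$
\Bigl(\tfrac{t_{2i-1}}{2}+\tfrac{t_{2i}}{2}\Bigr)\frac{1}{2t_{2i-2}}
=\frac{t_{2i-1}+t_{2i}}{2}\cdot\frac{1}{2\lambda_{2i-2}(t_{2i-1}+t_{2i})}
=\frac{1}{4\lambda_{2i-2}}.
$$
Second, the inner factor $C_{i-1}(r,t_1,\dots,t_{2i-2})^{-1}$ inherits from the substitution scheme applied to $C_i$ exactly the substitutions $t_{2j}=\lambda_{2j}(t_{2j+1}+t_{2j+2})$ for $j=1,\dots,i-2$ in their correct slots, together with the single value $\lambda_{2i-2}(t_{2i-1}+t_{2i})$ placed in its last slot; by the inductive hypothesis $\tilde C_{i-1}$ does not depend on that last slot, so the inner factor equals $\tilde C_{i-1}^{-1}$, a quantity involving only $r$, $t_1,t_3,\dots,t_{2i-3}$ and $\lambda_2,\dots,\lambda_{2i-4}$. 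Combining the two gives
$$
\tilde C_i = 1-\frac{2i^2M^2}{t_{2i-1}r^2}-\tilde C_{i-1}^{-1}\,\frac{1}{4\lambda_{2i-2}},
$$
and the right-hand side manifestly involves none of $t_4,t_6,\dots,t_{2i-2},t_{2i}$, which is (i).

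For the quantitative statement I would run a cascade. Applying the inductive hypothesis with $\epsilon=\tfrac12$ yields $R_{i-1},L_{i-1}$ with $\tilde C_{i-1}\ge\tfrac12$, hence $\tilde C_{i-1}^{-1}\le 2$, whenever $r\ge R_{i-1}$ and $\lambda_2,\dots,\lambda_{2i-4}\ge L_{i-1}$; in that regime
$$
\tilde C_i\ge 1-\frac{2i^2M^2}{t_{2i-1}r^2}-\frac{1}{2\lambda_{2i-2}}.
$$
Given $\epsilon\in(0,1)$, taking $r\ge\max\{R_{i-1},\,\sqrt{4i^2M^2/(\epsilon t_{2i-1})}\}$ and $\lambda_{2i-2}\ge 1/\epsilon$ (with $\lambda_2,\dots,\lambda_{2i-4}\ge L_{i-1}$) forces $\tilde C_i\ge 1-\epsilon$, closing the induction; choosing $\epsilon=\tfrac12$ gives (i) and (ii). The ``in particular'' clause then follows by fixing $r$ and $\lambda_2,\dots,\lambda_{2i-2}$ as above and reconstructing the even-indexed parameters from the top down, $t_{2i-2}:=\lambda_{2i-2}(t_{2i-1}+t_{2i})$, $t_{2i-4}:=\lambda_{2i-4}(t_{2i-3}+t_{2i-2})$, $\dots$, $t_2:=\lambda_2(t_3+t_4)$, all positive; by (i) the value of $C_i$ at this point equals $\tilde C_i\ge\tfrac12>0$.

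The step I expect to be the main obstacle is the bookkeeping underlying the second observation: one must verify precisely which substitutions are forced on the inner copy of $C_{i-1}$ by the substitution scheme applied to $C_i$, and then argue that the one ``extra'' substitution (in the final slot of $C_{i-1}$) is harmless exactly because of the inductive independence claim. Everything else is the routine observation that each newly produced term is either $O(r^{-2})$ or of the form $c\,\lambda_{2i-2}^{-1}$ with $c$ an absolute constant, so that both can be driven below any prescribed $\epsilon$ by enlarging $r$ and $\lambda_{2i-2}$.
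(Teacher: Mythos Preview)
Your proof is correct and follows essentially the same inductive strategy as the paper: both arguments hinge on the observation that the substitution $t_{2i-2}=\lambda_{2i-2}(t_{2i-1}+t_{2i})$ collapses the last term of the recursion to $\tilde C_{i-1}^{-1}/(4\lambda_{2i-2})$, and that the inner $C_{i-1}$ factor, after the $C_i$-substitutions, coincides with $\tilde C_{i-1}$ precisely because the inductive independence claim kills the one ``extra'' slot. Your version is somewhat more explicit than the paper's---you write down the closed-form recursion $\tilde C_i = 1 - 2i^2M^2/(t_{2i-1}r^2) - \tilde C_{i-1}^{-1}/(4\lambda_{2i-2})$ and carry a quantitative bound $\tilde C_i\ge 1-\epsilon$ through the induction rather than merely $\tilde C_i>0$---but this is a presentational refinement, not a different method.
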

\begin{proof}
We will prove this by induction. We have already seen that it is true for the $i = 2$ case (and in fact 
the $i = 3$ case). So assume it is true for all $2 \leq i \leq k-1$, we will prove it is true for $i = k$.

The formula for $C_k$ is given by
\begin{equation*}
C_k(r, t_1, \ldots , t_{2k-1}, t_{k}) 
= 1 - \frac{2k^2M^2}{t_{2k-1}r^2} - \Big(\frac{t_{2k-1}}{2} + \frac{t_{2k}}{2}\Big)
C_{k-1}(r, t_1, \ldots , t_{2(k-1)})^{-1}\frac{1}{2t_{2(k-1)}}. 
\end{equation*}
We observe that if we insert $\lambda_{2j}(t_{2j+1} + t_{2j+2})$ into the $2j$-th position for $1 \leq j < k-1$, we 
only affect the $C_{k-1}^{-1}$ in the above formula for $C_k$. Our induction hypothesis tells us
that this term can be made independent of $t_4, \ldots , t_{2(k-1)}$. If we then substitute 
$\lambda_{2(k-1)}(t_{2k-1} + t_{2k})$ into the $2(k-1)$-th position, 
 we see that we do not  affect the
$C_{k-1}^{-1}$ term, because this has been made independent of $t_{2(k-1)}$, and then we see that
we get rid of the $t_{2k}$ dependence arising in the other terms. In particular, it follows that
$C_k(r, t_1, \lambda_2(t_3 + t_4), \ldots , \lambda_{2k-2}(t_{2k-1} + t_{2k}), t_{2k-1}, t_{2k})$ is independent 
of $t_4, t_6, \ldots , t_{2(k-1)}, t_{2k}$, and this establishes that $C_k$ satisfies the first condition.

For the second condition, we observe that 
$C_k(r, t_1, \lambda_2(t_3 + t_4), \ldots , \lambda_{2k-2}(t_{2k-1} + t_{2k}), t_{2k-1}, t_{2k})$ is given by
\begin{multline*}
1 - \frac{2k^2M^2}{t_{2k-1}r^2} -
C_{k-1}(r, t_1, \lambda_2(t_3 + t_4), \ldots ,\\ \lambda_{2(k-2)}(t_{2(k-1)-1} + t_{2(k-1)}), 
t_{2(k-1)-1}, \lambda_{2(k-1)}(t_{2k-1} + t_{2k}))^{-1}\frac{1}{2\lambda_{2(k-1)}}.
\end{multline*}
Our induction hypothesis allows us to choose $\lambda_2, \lambda_4, \ldots , \lambda_{2(k-2)}$, and $r$ sufficiently 
large so that $C_i > 0$ for $2 \leq i \leq k-1$. Furthermore, the first condition of the induction hypothesis 
implies  that the constant
$$C_{k-1}(r, t_1, \lambda_2(t_3 + t_4), \ldots , 
	\lambda_{2(k-2)}(t_{2(k-1)-1} + t_{2(k-1)}), t_{2(k-1)-1}, t_{2(k-1)})$$ 
 does not depend on $t_{2(k-1)}$, which in turn allows us to conclude that
$$C_{k-1}(r, t_1, \lambda_2(t_3 + t_4), \ldots , \lambda_{2(k-2)}(t_{2(k-1)-1} + t_{2(k-1)}), 
t_{2(k-1)-1}, \lambda_{2(k-1)}(t_{2k-1} + t_{2k}))$$
 does not depend on $\lambda_{2(k-1)}$. 
Therefore, by taking $\lambda_{2(k-1)}$ and $r$ sufficiently large we can make it so that
\begin{equation*}
C_k(r, t_1, \lambda_2(t_3 + t_4), \ldots , \lambda_{2k-2}(t_{2k-1} + t_{2k}), t_{2k-1}, t_{2k}) > 0.
\end{equation*}
This establishes the second condition for $C_k$. It follows by induction that it is true for all
$i \geq 2$.

As for the last statement, simply observe that if we are given positive numbers
$t_1, t_3, \ldots , t_{2i-1}, t_{2i}$ we can substitute $t_{2j} = \lambda_{2j}(t_{2j+1} + t_{2j+2})$ into the
$2j$-th position for $1 \leq j < i$. By condition (ii) we can then choose 
$\lambda_2, \lambda_4, \ldots , \lambda_{2(i-1)}$, and $r$ sufficiently large so that $C_i > 0$.  
\end{proof}

In the following proposition, we will prove the required local estimate for $\Dir^k(u)$. We will 
use the definition of the $C_i$ outlined in the hypothesis of the above lemma.

\begin{proposition}\label{Dir^k_loc}
For $u \in \Sp_{k+1}$, given any $t_1, t_3, \ldots , t_{2k-1}, t_{2k} > 0$ 
we can choose positive $t_2, t_4, \ldots , t_{2(k-1)}, r$ sufficiently large so that 
$C_i > 0$ for $1\leq i \leq k$, and so that 
\begin{equation*}
\begin{aligned}
C_k\vert\vert &b_k^2 \Dir^k(u)\vert\vert^2_{B_{2e(r)r}} \leq 
\frac{1}{2t_k}\vert\vert b_r^{k+1}\Dir^{k+1}(u)\vert\vert^2_{B_{2e(r)r}} \\
&\qquad + \Big(\frac{t_1}{2} + \frac{t_2}{2}\Big)\Big(\frac{t_3}{2} + \frac{t_4}{2}\Big)\cdots\Big(\frac{t_{2k-1}}{2} + 
\frac{t_{2k}}{2}\Big)
C_1^{-1}C_2^{-1}\cdots C_{k-2}^{-1}C_{k-1}^{-1}
\vert\vert u\vert\vert^2_{B_{2e(r)r}}.
\end{aligned}
\end{equation*}
\end{proposition}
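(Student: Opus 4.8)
The plan is to prove this by induction on $k$, with the base cases $k = 1$ and $k = 2$ being exactly the local estimates \eqref{D_loc_est} and \eqref{Dir^2_loc_est} from Propositions \ref{Dir_est} and \ref{Dir^2_loc_prop}. Throughout we are in the range $k \leq m$ (this is implicit, since $\Dir^{k+1}$ appears), so that $\Dir^{k+1}$ acts classically and the iterated images under $\Dir$ of a smooth compactly supported section are of class at least $\Ck{0,1}$. Assume the asserted estimate holds for $\Dir^{k-1}$, i.e.\ there is a bound for $\|b_r^{k-1}\Dir^{k-1}(u)\|^2_{B_{2e(r)r}}$ in terms of $\|b_r^{k}\Dir^{k}(u)\|^2_{B_{2e(r)r}}$ and $\|u\|^2_{B_{2e(r)r}}$ with coefficients assembled from $C_1,\dots,C_{k-1}$ precisely as in Lemma \ref{constants}.

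First I would fix a smooth compactly supported $\eta$ with $\eta = 1$ on $B_{3e(r)r}$ and $\eta = 0$ off $B_{4e(r)r}$, so that $\eta u$ lies in $\Sp_{k+1}$ and agrees with $u$, together with all its $\Dir$-derivatives, on $B_{2e(r)r}$. Writing $\Dir^{k} = \Dir \comp \Dir^{k-1}$, moving one factor of $\Dir$ across by Proposition \ref{lip_sym} (legitimate since $b_r^{2k}\Dir^{k}(\eta u) \in \Ck[c]{0,1}(\cV)$ and $\Dir^{k-1}(\eta u) \in \dom(\Dir_D) \subset \dom(\Dir_N)$ with $\Dir_N \Dir^{k-1}(\eta u) = \Dir^{k}(\eta u)$, as in the discussion after Proposition \ref{lip_sym}), then using $\Dir(b_r^{2k}\Dir^{k}(\eta u)) = \Mul_{b_r^{2k}}(\Dir^{k}(\eta u)) + b_r^{2k}\Dir^{k+1}(\eta u)$ almost everywhere and splitting $b_r^{2k} = b_r^{k+1}b_r^{k-1}$, I obtain
\begin{align*}
\|b_r^{k}\Dir^{k}(\eta u)\|^2
	&= \langle \Dir\big(b_r^{2k}\Dir^{k}(\eta u)\big), \Dir^{k-1}(\eta u)\rangle \\
	&= \langle \Mul_{b_r^{2k}}(\Dir^{k}(\eta u)), \Dir^{k-1}(\eta u)\rangle
		+ \langle b_r^{k+1}\Dir^{k+1}(\eta u), b_r^{k-1}\Dir^{k-1}(\eta u)\rangle .
\end{align*}
Since $\Mul_{b_r^{2k}}$ and $b_r^{k+1}$ are supported in $\overline{B}_{2e(r)r}$, where $\eta u = u$, this identity holds verbatim with $u$ in place of $\eta u$ and all inner products taken over $B_{2e(r)r}$, exactly as in the passage to \eqref{est_b_rD_1}.

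Next I would estimate the two terms. Rewriting the commutator term as $\langle b_r^{-(k-1)}\Mul_{b_r^{2k}}(\Dir^{k}u), b_r^{k-1}\Dir^{k-1}u\rangle_{B_{2e(r)r}}$ and observing that $b_r^{-(k-1)}\Mul_{b_r^{2k}} = \tfrac{2k}{k+1}\Mul_{b_r^{k+1}}$ carries no singularity, the bound \eqref{L^2_bound_b_r^k} gives $\|b_r^{-(k-1)}\Mul_{b_r^{2k}}(\Dir^{k}u)\|_{B_{2e(r)r}} \leq \tfrac{2kM}{r}\|b_r^{k}\Dir^{k}u\|_{B_{2e(r)r}}$; Cauchy--Schwarz with weight $t_{2k-1}$ then produces $\tfrac{2k^2M^2}{t_{2k-1}r^2}\|b_r^{k}\Dir^{k}u\|^2_{B_{2e(r)r}} + \tfrac{t_{2k-1}}{2}\|b_r^{k-1}\Dir^{k-1}u\|^2_{B_{2e(r)r}}$, while Cauchy--Schwarz with weight $t_{2k}$ on the second term gives $\tfrac{1}{2t_{2k}}\|b_r^{k+1}\Dir^{k+1}u\|^2_{B_{2e(r)r}} + \tfrac{t_{2k}}{2}\|b_r^{k-1}\Dir^{k-1}u\|^2_{B_{2e(r)r}}$. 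For $r$ large enough that $1 - \tfrac{2k^2M^2}{t_{2k-1}r^2} > 0$, this rearranges to
\begin{equation*}
\Big(1 - \tfrac{2k^2M^2}{t_{2k-1}r^2}\Big)\|b_r^{k}\Dir^{k}u\|^2_{B_{2e(r)r}}
	\leq \Big(\tfrac{t_{2k-1}}{2} + \tfrac{t_{2k}}{2}\Big)\|b_r^{k-1}\Dir^{k-1}u\|^2_{B_{2e(r)r}}
		+ \tfrac{1}{2t_{2k}}\|b_r^{k+1}\Dir^{k+1}u\|^2_{B_{2e(r)r}} .
\end{equation*}
Into the $\|b_r^{k-1}\Dir^{k-1}u\|^2_{B_{2e(r)r}}$ term I would now substitute the inductive local estimate; here it is essential, exactly as in Remark \ref{rem_Dir_est}, that this estimate requires only that $u$ be smooth and not that $u \in \Sp_{k-1}$, which is not yet available. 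The self-referential term $\|b_r^{k}\Dir^{k}u\|^2_{B_{2e(r)r}}$ that this substitution produces has coefficient $\big(\tfrac{t_{2k-1}}{2}+\tfrac{t_{2k}}{2}\big)C_{k-1}^{-1}\tfrac{1}{2t_{2(k-1)}}$, so transferring it to the left reconstitutes exactly the constant $C_k$ of Lemma \ref{constants}, while the remaining terms assemble into $\tfrac{1}{2t_{2k}}\|b_r^{k+1}\Dir^{k+1}u\|^2_{B_{2e(r)r}}$ together with $\big(\tfrac{t_1}{2}+\tfrac{t_2}{2}\big)\cdots\big(\tfrac{t_{2k-1}}{2}+\tfrac{t_{2k}}{2}\big)C_1^{-1}\cdots C_{k-1}^{-1}\|u\|^2_{B_{2e(r)r}}$, which is the claimed inequality. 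Finally, Lemma \ref{constants} supplies positive $t_2, t_4, \dots, t_{2(k-1)}$ and a large $r$ for which $C_1, \dots, C_k$ are all positive, closing the induction.

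The main obstacle here is bookkeeping rather than analysis: one must arrange the integration by parts and the two weighted Cauchy--Schwarz steps so that the coefficient of the reappearing term $\|b_r^{k}\Dir^{k}u\|^2_{B_{2e(r)r}}$ matches, term for term, the recursion defining $C_k$ in Lemma \ref{constants} (so that the positivity statement can be delegated wholesale to that lemma), and one must be careful to invoke the inductive hypothesis in its \emph{local} form, valid for merely smooth $u$, since the \emph{global} statement $u \in \Sp_{k-1}$ is only recovered afterwards by letting $r \to \infty$ via monotone convergence. The analytic ingredients --- Proposition \ref{lip_sym}, the gradient and multiplier bounds leading to \eqref{L^2_bound_b_r^k}, and the precompactness of metric balls furnished by completeness --- are all already in place.
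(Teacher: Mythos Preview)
Your proof is correct and follows essentially the same route as the paper: induction on $k$ with the $k=2$ case of Proposition \ref{Dir^2_loc_prop} as the base, the same cutoff $\eta$ and integration by parts via Proposition \ref{lip_sym}, the same weighted Cauchy--Schwarz estimates with parameters $t_{2k-1}$ and $t_{2k}$, substitution of the inductive \emph{local} estimate (with the crucial observation of Remark \ref{rem_Dir_est} that only smoothness of $u$ is needed there), and the appeal to Lemma \ref{constants} for positivity of the $C_i$. The only cosmetic difference is that you rewrite $b_r^{-(k-1)}\Mul_{b_r^{2k}} = \tfrac{2k}{k+1}\Mul_{b_r^{k+1}}$ explicitly, whereas the paper simply divides by $b_r^{k-1}$ using that $b_r$ has no zeros on $B_{2e(r)r}$; both lead to the same bound $\tfrac{2kM}{r}\|b_r^{k}\Dir^{k}u\|_{B_{2e(r)r}}$.
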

\begin{proof}
We will prove this by induction, the $k = 2$ case being done in Proposition \ref{Dir^2_loc_prop}. So assume
the Proposition is true for $k-1$. This means that for $u \in \Sp_k$, given any 
$t_1, t_3, \ldots , t_{2(k-1)-1}, t_{2(k-1)} > 0$ we can choose positive  $t_2, t_4, \ldots , t_{2(k-2)}, r$ 
sufficiently large so that $C_i > 0$ for $1\leq i \leq k-1$, and so that 
\begin{equation*}
\begin{aligned}
C_{k-1}\vert\vert b_r^{k-1}\Dir^{k-1}(u)\vert\vert^2_{B_{2r}} &\leq
\frac{1}{2t_{2(k-1)}}\vert\vert b_r^k\Dir^k(u)\vert\vert^2_{B_{2r}} \\
&+
\Big(\frac{t_1}{2} + \frac{t_2}{2}\Big)\Big(\frac{t_3}{2} + \frac{t_4}{2}\Big)\cdots\\ &\qquad \Big(\frac{t_{2(k-1)-1}}{2} + 
\frac{t_{2(k-1)}}{2}\Big)
C_1^{-1}C_2^{-1}\cdots C_{k-3}^{-1}C_{k-2}^{-1}
\vert\vert u\vert\vert^2_{B_{2r}}.
\end{aligned}
\end{equation*}  
It should be noted, as we observed in Remark \ref{rem_Dir_est}, 
that such an estimate holds merely for $u \in \Ck{\infty}(\cV)$.  
When we substitute this estimate into the derived estimate
for $\Dir^k(u)$ we will use this fact without explicit mention.

By Lemma \ref{constants}, we can choose $t_2, t_4, \ldots , t_{2(k-1)}$ and $r$  sufficiently large so that 
$C_i > 0$ for $1\leq i \leq k$. Furthermore, in  the proof  of that lemma, we showed how 
we could choose $t_2, t_4, \ldots , t_{2(k-1)}$ so that the $C_{k-1}$ would not depend on $t_{2(k-1)}$.
In particular, this means that for these $C_i > 0$ for $1 \leq i \leq k-1$ the above estimate
for $\Dir^{k-1}(u)$ holds with all the $C_i > 0$. Therefore, all that we need to do is show that 
the above estimate holds.

To obtain an estimate for $\Dir^k(u)$ we start by estimating $\vert\vert b_r^k\Dir^k(u)\vert\vert^2_{B_{2e(r)r}}$. 
We begin by fixing a smooth compactly supported function $\eta$ such that $\eta = 1$ on $B_{3e(r)r}$ and 
$\eta$  vanishing  outside of $B_{4e(r)r}$. We then have that
\begin{align*}
\langle b_r^k\Dir^k(\eta u), b_r^k\Dir^k(\eta u)\rangle &= \langle b_r^{2k}\Dir^k(\eta u), \Dir^k(\eta u)\rangle \\
&= \langle \Dir(b_r^{2k}\Dir^k(\eta u)), \Dir^{k-1}(\eta u)\rangle \\
&= \langle \Mul_{b_r^{2k}}(\Dir^k(\eta u)), \Dir^{k-1}(\eta u)\rangle
+ \langle b_r^{2k}\Dir^{k+1}(\eta u), \Dir^{k-1}(\eta u)\rangle \\
&= \langle \Mul_{b_r^{2k}}(\Dir^k(\eta u)), \Dir^{k-1}(\eta u)\rangle
+ \langle b_r^{k+1}\Dir^{k+1}(\eta u), b_r^{k-1}\Dir^{k-1}(\eta u)\rangle
\end{align*}
where the second equality follows from Proposition \ref{lip_sym}.

As $b_r^k$ is supported inside $\overline{B}_{2e(r)r}$, and $\eta = 1$ on $B_{3e(r)r}$ we have
\begin{multline*}
\langle b_r^k\Dir^k(u), b_r^k\Dir^k(u)\rangle_{B_{2e(r)r + \epsilon}} \\ =
\langle \Mul_{b_r^{2k}}(\Dir^k(u)), \Dir^{k-1}(u)\rangle_{B_{2e(r)r + \epsilon}}
+ \langle b_r^{k+1}\Dir^{k+1}(u), b_r^{k-1}\Dir^{k-1}(u)\rangle_{B_{2e(r)r + \epsilon}}
\end{multline*}
for any $\epsilon > 0$ sufficiently small. In particular, this then gives
\begin{multline*}
\langle b_r^k\Dir^k(u), b_r^k\Dir^k(u)\rangle_{B_{2e(r)r}} \\ =
\langle \Mul_{b_r^{2k}}(\Dir^k(u)), \Dir^{k-1}(u)\rangle_{B_{2e(r)r}}
+ \langle b_r^{k+1}\Dir^{k+1}(u), b_r^{k-1}\Dir^{k-1}(u)\rangle_{B_{2e(r)r}}.
\end{multline*}

Using Cauchy-Schwarz, the bound \eqref{L^2_bound_b_r^k}, and the fact that $b_r^{k-1}$ does not have any zeros  in  
$B_{2e(r)r}$, 
we have
\begin{align*}
\big\vert \langle \Mul_{b_r^{2k}}(\Dir^k(u)), &\Dir^{k-1}(u)\rangle_{B_{2e(r)r}}\big\vert \\  &= 
\Big\vert \Big\langle \frac{1}{b_r^{k-1}}\Mul_{b_r^{2k}}(\Dir^k(u)), b_r^{k-1}\Dir^{k-1}(u)\Big\rangle_{B_{2e(r)r}}\Big\vert \\
&\leq \frac{2k^2M^2}{t_{2k-1}r^2}\Big\vert\Big\vert \frac{b_r^{2k-1}}{b_r^{k-1}}\Dir^k(u)\Big\vert\Big\vert^2_{B_{2e(r)r}} + 
\frac{t_{2k-1}}{2}\vert\vert b_r^{k-1}\Dir^{k-1}(u)\vert\vert^2_{B_{2e(r)r}} \\
&= \frac{2k^2M^2}{t_{2k-1}r^2}\vert\vert b_r^k\Dir^k(u)\vert\vert^2_{B_{2e(r)r}} + 
\frac{t_{2k-1}}{2}\vert\vert b_r^{k-1}\Dir^{k-1}(u)\vert\vert^2_{B_{2e(r)r}}.
\end{align*}
Similarly we have
\begin{equation*}
\vert\langle b_r^{2k}\Dir^{k+1}(u), \Dir^{k-1}(u)\rangle_{B_{2e(r)r}}\vert \leq 
\frac{1}{2t_{2k}}\vert\vert b_r^{k+1}\Dir^{k+1}(u)\vert\vert^2_{B_{2e(r)r}} + 
\frac{t_{2k}}{2}\vert\vert b_r^{k-1}\Dir^{k-1}(u)\vert\vert^2_{B_{2e(r)r}}.
\end{equation*}
These two bounds imply
\begin{multline*}
\vert\vert b_r^k\Dir^k(u)\vert\vert^2_{B_{2e(r)r}} \leq 
\frac{2k^2M^2}{t_{2k-1}r^2}\vert\vert b_r^k\Dir^k(u)\vert\vert^2_{B_{2e(r)r}} + 
\Big(\frac{t_{2k-1}}{2} + \frac{t_{2k}}{2}\Big)\vert\vert b_r^{k-1}\Dir^{k-1}(u)\vert\vert^2_{B_{2e(r)r}} \\ +
\frac{1}{2t_{2k}}\vert\vert b_r^{k+1}\Dir^{k+1}(u)\vert\vert^2_{B_{2e(r)r}}.
\end{multline*}
We now substitute the estimate for $\vert\vert b_r^{k-1}\Dir^{k-1}(u)\vert\vert^2_{B_{2e(r)r}}$, as given in
the induction hypothesis, to obtain
\begin{align*}
 \vert\vert b_r^k&\Dir^k(u)\vert\vert^2_{B_{2e(r)r}} \leq 
\frac{2k^2M^2}{t_{2k-1}r^2}\vert\vert b_r^k\Dir^k(u)\vert\vert^2_{B_{2e(r)r}} \\&+
\Big(\frac{t_{2k-1}}{2} + \frac{t_{2k}}{2}\Big)C_{k-1}^{-1}\frac{1}{2t_{2(k-1)}}\vert\vert 
b_r^k\Dir^k(u)\vert\vert^2_{B_{2e(r)r}} \\
&+ 
\Big(\frac{t_1}{2} + \frac{t_2}{2}\Big)\Big(\frac{t_3}{2} + \frac{t_4}{2}\Big)\cdots\Big(\frac{t_{2(k-1)-1}}{2} + 
\frac{t_{2(k-1)}}{2}\Big)\Big(\frac{t_{2k-1}}{2} + \frac{t_{2k}}{2}\Big)
C_1^{-1}C_2^{-1}\\&\qquad\qquad\cdots C_{k-3}^{-1}C_{k-2}^{-1}C_{k-1}^{-1}\vert\vert u\vert\vert^2_{B_{2r}} \\
&+ \frac{1}{2t_{2k}}\vert\vert b_r^{k+1}\Dir^{k+1}(u)\vert\vert^2_{B_{2e(r)r}}. 
\end{align*}
 This can be re-written as 
\begin{align*}
 \Bigg(1 - & \frac{2k^2M^2}{t_{2k-1}r^2} - 
\Big(\frac{t_{2k-1}}{2} + \frac{t_{2k}}{2}\Big)C_{k-1}^{-1}\frac{1}{2t_{2(k-1)}}\Bigg) \vert\vert 
b_r^k\Dir^k(u)\vert\vert^2_{B_{2e(r)r}} \\&\leq 
\frac{1}{2t_{2k}}\vert\vert b_r^{k+1}\Dir^{k+1}(u)\vert\vert^2_{B_{2e(r)r}} +
\Big(\frac{t_1}{2} + \frac{t_2}{2}\Big)\Big(\frac{t_3}{2} + \frac{t_4}{2}\Big) \\ &\qquad\quad\cdots\Big(\frac{t_{2(k-1)-1}}{2} + 
\frac{t_{2(k-1)}}{2}\Big)\Big(\frac{t_{2k-1}}{2} + \frac{t_{2k}}{2}\Big)
C_1^{-1}C_2^{-1}\cdots C_{k-3}^{-1}C_{k-2}^{-1}C_{k-1}^{-1}\vert\vert u\vert\vert^2_{B_{2e(r)r}}
\end{align*}
By definition, the coefficient on the left hand side is precisely $C_k$, and so we have established the
estimate for $\Dir^k(u)$.
\end{proof}

The local estimates we obtain, from the above proposition, can be promoted to a global estimate by taking $r 
\rightarrow \infty$. All we have to note is that we have explicit formulas for the constants. We know that 
$C_1(r, t_1) = \Big(1 - \frac{2M^2}{t_1r^2}\Big)$, and in general
\begin{equation*}
C_i(r, t_1, t_2, \ldots , t_{2i-1}, t_{2i}) = \Bigg(1 - \frac{2i^2M^2}{t_{2i-1}r^2} - 
\Big(\frac{t_{2i-1}}{2} + \frac{t_{2i}}{2}\Big)C_{i-1}^{-1}\frac{1}{2t_{2(i-1)}}\Bigg). 
\end{equation*}
Therefore, it follows that as
$r \rightarrow \infty$ we have that $C_1 \rightarrow 1$, and in general
\begin{equation*}
C_i \rightarrow C_i^0 := \Bigg(1 - \Big(\frac{t_{2i-1}}{2} + \frac{t_{2i}}{2}\Big)(C_{i-1}^0)^{-1}\frac{1}
{2t_{2(i-1)}}\Bigg).
\end{equation*}

One can then prove an  analogous result  to  Lemma \ref{constants} for the $C_i^0$. We will not do this
as the proof is exactly the same as the proof of Lemma \ref{constants}. Using this fact, and taking
$r \rightarrow \infty$ in  the  local  estimate in Proposition \ref{Dir^k_loc}, we obtain the following theorem.

\begin{theorem}\label{Dir^k_global}
Fix $u \in \Sp_{k+1}$, given positive $t_1, t_3, t_5, \ldots , t_{2k-3}, t_{2k-1}, t_{2k}$ we can find
positive $t_2, t_4, \ldots , t_{2(k-2)}, t_{2(k-1)}$ so that  $C_i^0 > 0$ for all $1 \leq i \leq k$, and
so that
\begin{align*}
&C_{k}^0\vert\vert \Dir^{k}(u)\vert\vert^2 \leq 
\frac{1}{2t_{2k}}\vert\vert \Dir^{k+1}(u)\vert\vert^2  +
\Big(\frac{t_1}{2} + \frac{t_2}{2}\Big)\\&\qquad\cdots\Big(\frac{t_{2(k-1)-1}}{2} + 
\frac{t_{2(k-1)}}{2}\Big)\Big(\frac{t_{2k-1}}{2} + \frac{t_{2k}}{2}\Big)
(C_1^0)^{-1}(C_2^0)^{-1}\cdots (C_{k-2}^0)^{-1}(C_{k-1}^0)^{-1}\vert\vert u\vert\vert^2.
\end{align*}
In particular $\vert\vert \Dir^k(u)\vert\vert < \infty$, and $\Sp_{k+1} \subseteq \Sp_k$.

\end{theorem}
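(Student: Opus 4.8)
The plan is to promote the local estimate of Proposition~\ref{Dir^k_loc} to a global one by letting $r\to\infty$, exactly in the spirit of the passage from \eqref{D_loc_est} to \eqref{Dir_glob_est} carried out in Proposition~\ref{Dir_est}. First I would record the behaviour of the structure constants in $r$: from the defining recursion in Lemma~\ref{constants}, $C_1(r,t_1)=1-2M^2/(t_1r^2)$ is increasing in $r$, and inductively each $C_i(r,t_1,\dots,t_{2i})$ is increasing in $r$ on the range where $C_1(r,\dots),\dots,C_{i-1}(r,\dots)$ are positive, with $C_i(r,\dots)\to C_i^0$ as $r\to\infty$. I would then prove the analogue of Lemma~\ref{constants} for the limiting constants $C_i^0$ --- the proof is the same recursion with the terms $2i^2M^2/(t_{2i-1}r^2)$ absent --- so that, given positive $t_1,t_3,\dots,t_{2k-1},t_{2k}$, one may fix positive $t_2,t_4,\dots,t_{2(k-1)}$ with $C_i^0>0$ for all $1\le i\le k$. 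Since $C_i(r,\dots)$ increases to $C_i^0$ and $C_i^0>0$, there is then an $r_0$ with $0<C_i(r_0,\dots)\le C_i(r,\dots)\le C_i^0$ for every $r\ge r_0$ and every $1\le i\le k$.

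Next I would fix these $t_j$, take any $r\ge r_0$, and apply Proposition~\ref{Dir^k_loc}. On its left-hand side, since $b_r\equiv1$ on $B_r$ and $C_k(r,\dots)>0$, I would bound below by $C_k(r,\dots)\,\norm{\Dir^k u}_{B_r}^2$; on its right-hand side, since $0\le b_r\le1$ and $b_r$ is supported in $\overline{B}_{2e(r)r}$, I would bound the two displayed norms above by $\norm{\Dir^{k+1}u}^2$ and $\norm{u}^2$ respectively. This produces, for every $r\ge r_0$,
\[
C_k(r,\dots)\,\norm{\Dir^k u}_{B_r}^2\le\frac{1}{2t_{2k}}\norm{\Dir^{k+1}u}^2+P\,C_1(r,\dots)^{-1}\cdots C_{k-1}(r,\dots)^{-1}\norm{u}^2,
\]
with $P=\prod_{j=1}^{k}\bigl(\tfrac{t_{2j-1}}{2}+\tfrac{t_{2j}}{2}\bigr)$. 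Since $u\in\Sp_{k+1}$, the norms $\norm{\Dir^{k+1}u}$ and $\norm{u}$ are finite, and since $C_i(r,\dots)^{-1}\le C_i(r_0,\dots)^{-1}$ for $r\ge r_0$, the right-hand side is bounded uniformly in $r$ by some finite $K$; hence $\norm{\Dir^k u}_{B_r}^2\le K/C_k(r_0,\dots)$ for all $r\ge r_0$, and letting $r\to\infty$ along the exhaustion of $\cM$ by the balls $B_r$, the monotone convergence theorem gives $\norm{\Dir^k u}<\infty$. In particular $\Dir^k u\in\Lp{2}(\cV)$, so, together with $u\in\Ck{\infty}(\cV)\cap\Lp{2}(\cV)$, this shows $u\in\Sp_k$; as $u$ was an arbitrary element of $\Sp_{k+1}$, this is the inclusion $\Sp_{k+1}\subseteq\Sp_k$.

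Finally, now that $\norm{\Dir^k u}<\infty$ is in hand, I would pass the displayed inequality to the limit $r\to\infty$: the left-hand side tends to $C_k^0\,\norm{\Dir^k u}^2$ (product of the convergent sequences $C_k(r,\dots)\to C_k^0$ and $\norm{\Dir^k u}_{B_r}^2\uparrow\norm{\Dir^k u}^2$), and the right-hand side tends to $\tfrac{1}{2t_{2k}}\norm{\Dir^{k+1}u}^2+P\,(C_1^0)^{-1}\cdots(C_{k-1}^0)^{-1}\norm{u}^2$ since $C_i(r,\dots)^{-1}\to(C_i^0)^{-1}$; this is the claimed global estimate. The step I expect to be the main obstacle --- really the only delicate point, since the analytic content sits in Proposition~\ref{Dir^k_loc} and Lemma~\ref{constants}, both already available --- is the bookkeeping with the structure constants: one must verify that the $C_i(r,\dots)$ remain positive, are monotone in $r$, and converge to the positive limits $C_i^0$, as this is exactly what preserves the direction of the inequality after the norm comparisons and what legitimises taking $r\to\infty$.
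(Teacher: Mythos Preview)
Your proposal is correct and follows essentially the same approach as the paper: both promote the local estimate of Proposition~\ref{Dir^k_loc} to a global one by letting $r\to\infty$, using the analogue of Lemma~\ref{constants} for the limiting constants $C_i^0$ and the monotone convergence theorem. Your version is in fact more carefully argued than the paper's, which is terse here; in particular, your observation that each $C_i(r,\dots)$ is monotone increasing in $r$ on the region where $C_1,\dots,C_{i-1}$ are positive is a clean way to guarantee simultaneously that the local estimate is available for all $r\ge r_0$ with the \emph{same} choice of $t_2,\dots,t_{2(k-1)}$, and that the right-hand side stays uniformly bounded before one knows $\norm{\Dir^k u}<\infty$.
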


With the aid of this theorem,  we conclude the paper
by presenting the following proof of Theorem \ref{Thm:SecondMain}. 

\begin{proof}[Proof of Theorem \ref{Thm:SecondMain}]
The above theorem shows us that $\Sp_k \subseteq \Sp_{k-1} \subseteq \ldots \subseteq \Sp_2 \subseteq \Sp_1$  for $1 \leq k \leq m+1$. 
 Fixing  $u, v \in \Sp_k$, we have using Theorem \ref{D_essen_self} that 
\begin{equation*}
\langle \Dir^ku, v\rangle = \langle \Dir^{k-1}u, \Dir v\rangle = \langle \Dir^{k-2}u, \Dir^2v\rangle = \ldots =
\langle \Dir u, \Dir^{k-1}v\rangle = \langle u, \Dir^kv\rangle. 
\end{equation*}
This implies that $\Dir^k$ satisfies the negligible boundary condition \eqref{lneg}
for $l = k$ and the conclusion follows from Theorem \ref{Thm:FirstMain}.
\end{proof}

\bibliographystyle{amsplain}

\begin{thebibliography}{10}

\bibitem{ADMc}
David Albrecht, Xuan Duong, and Alan McIntosh, \emph{Operator theory and
  harmonic analysis}, Instructional {W}orkshop on {A}nalysis and {G}eometry,
  {P}art {III} ({C}anberra, 1995), Proc. Centre Math. Appl. Austral. Nat.
  Univ., vol.~34, Austral. Nat. Univ., Canberra, 1996, pp.~77--136.
  \MR{1394696}

\bibitem{AC}
Michael~T. Anderson and Jeff Cheeger, \emph{{$C^\alpha$}-compactness for
  manifolds with {R}icci curvature and injectivity radius bounded below}, J.
  Differential Geom. \textbf{35} (1992), no.~2, 265--281. \MR{1158336}

\bibitem{andreotti}
Aldo Andreotti and Edoardo Vesentini, \emph{Carleman estimates for the
  {L}aplace-{B}eltrami equation on complex manifolds}, Inst. Hautes \'Etudes
  Sci. Publ. Math. (1965), no.~25, 81--130. \MR{0175148}

\bibitem{BRough}
Lashi Bandara, \emph{Rough metrics on manifolds and quadratic estimates},
  Mathematische Zeitschrift (2016), 1--37.

\bibitem{BLM}
Lashi {Bandara}, Sajjad {Lakzian}, and Michael {Munn}, \emph{{Geometric
  singularities and a flow tangent to the Ricci flow}}, To appear in Ann. Sc.
  Norm. Super. Pisa Cl. Sci.

\bibitem{BMcR}
Lashi {Bandara}, Alan {McIntosh}, and Andreas {Ros{\'e}n}, \emph{{Riesz
  continuity of the Atiyah-Singer Dirac operator under perturbations of the
  metric}}, ArXiv e-prints (2016).

\bibitem{Bernau}
S.~J. Bernau, \emph{The square root of a positive self-adjoint operator}, J.
  Austral. Math. Soc. \textbf{8} (1968), 17--36. \MR{0226433}

\bibitem{BR}
Martin~R. Bridson and Andr{\'e} Haefliger, \emph{Metric spaces of non-positive
  curvature}, Grundlehren der Mathematischen Wissenschaften [Fundamental
  Principles of Mathematical Sciences], vol. 319, Springer-Verlag, Berlin,
  1999. \MR{1744486}

\bibitem{Burtscher}
Annegret~Y. Burtscher, \emph{Length structures on manifolds with continuous
  {R}iemannian metrics}, New York J. Math. \textbf{21} (2015), 273--296.
  \MR{3358543}

\bibitem{Vargas}
Alexander Cardona, Iv\'an Contreras, and Andr\'es~F. Reyes-Lega,
  \emph{Geometric and topological methods for quantum field theory: Proceedings
  of the 2009 villa de leyva summer school}, Cambridge University Press, 2013.

\bibitem{CH}
Jingyi Chen and Elton~P. Hsu, \emph{Gradient estimates for harmonic functions
  on manifolds with {L}ipschitz metrics}, Canad. J. Math. \textbf{50} (1998),
  no.~6, 1163--1175. \MR{1657771}

\bibitem{CD}
Xiuxiong Chen and Weiyue Ding, \emph{Ricci flow on surfaces with degenerate
  initial metrics}, J. Partial Differential Equations \textbf{20} (2007),
  no.~3, 193--202. \MR{2348984}

\bibitem{Chernoff}
Paul~R. Chernoff, \emph{Essential self-adjointness of powers of generators of
  hyperbolic equations}, J. Functional Analysis \textbf{12} (1973), 401--414.
  \MR{0369890}

\bibitem{RF}
Bennett Chow, Sun-Chin Chu, David Glickenstein, Christine Guenther, James
  Isenberg, Tom Ivey, Dan Knopf, Peng Lu, Feng Luo, and Lei Ni, \emph{The
  {R}icci flow: techniques and applications. {P}art {I}}, Mathematical Surveys
  and Monographs, vol. 135, American Mathematical Society, Providence, RI,
  2007, Geometric aspects. \MR{2302600 (2008f:53088)}

\bibitem{Cordes}
H.~O. Cordes, \emph{Self-adjointness of powers of elliptic operators on
  non-compact manifolds}, Math. Ann. \textbf{195} (1972), 257--272.
  \MR{0292111}

\bibitem{Gaffney}
Matthew~P. Gaffney, \emph{The harmonic operator for exterior differential
  forms}, Proc. Nat. Acad. Sci. U. S. A. \textbf{37} (1951), 48--50.
  \MR{0048138}

\bibitem{GM}
N.~{Gigli} and C.~{Mantegazza}, \emph{A flow tangent to the {R}icci flow via
  heat kernels and mass transport}, Adv. Math. \textbf{250} (2014), 74--104.
  \MR{3122163}

\bibitem{GrantTassotti}
J.~D.~E. {Grant} and N.~{Tassotti}, \emph{{A positive mass theorem for
  low-regularity Riemannian metrics}}, ArXiv e-prints (2014).

\bibitem{Haase}
Markus Haase, \emph{The functional calculus for sectorial operators}, Operator
  Theory: Advances and Applications, vol. 169, Birkh\"auser Verlag, Basel,
  2006. \MR{2244037}

\bibitem{ML}
H.~Blaine Lawson, Jr. and Marie-Louise Michelsohn, \emph{Spin geometry},
  Princeton Mathematical Series, vol.~38, Princeton University Press,
  Princeton, NJ, 1989. \MR{1031992 (91g:53001)}

\bibitem{Lunardi}
Alessandra Lunardi, \emph{Interpolation theory}, second ed., Appunti. Scuola
  Normale Superiore di Pisa (Nuova Serie). [Lecture Notes. Scuola Normale
  Superiore di Pisa (New Series)], Edizioni della Normale, Pisa, 2009.
  \MR{2523200}

\bibitem{Masamune}
Jun Masamune, \emph{Analysis of the {L}aplacian of an incomplete manifold with
  almost polar boundary}, Rend. Mat. Appl. (7) \textbf{25} (2005), no.~1,
  109--126. \MR{2142127 (2006a:58040)}

\bibitem{Roelcke}
Walter Roelcke, \emph{\"{U}ber den {L}aplace-{O}perator auf {R}iemannschen
  {M}annigfaltigkeiten mit diskontinuierlichen {G}ruppen}, Math. Nachr.
  \textbf{21} (1960), 131--149. \MR{0151927}

\bibitem{SchoenYau}
Richard Schoen and Shing~Tung Yau, \emph{On the proof of the positive mass
  conjecture in general relativity}, Comm. Math. Phys. \textbf{65} (1979),
  no.~1, 45--76. \MR{526976}

\bibitem{Simon}
Miles Simon, \emph{Deforming {L}ipschitz metrics into smooth metrics while
  keeping their curvature operator non-negative}, Geometric evolution
  equations, Contemp. Math., vol. 367, Amer. Math. Soc., Providence, RI, 2005,
  pp.~167--179. \MR{2115759}

\bibitem{Witten}
Edward Witten, \emph{A new proof of the positive energy theorem}, Comm. Math.
  Phys. \textbf{80} (1981), no.~3, 381--402. \MR{626707}

\bibitem{Wolf}
Joseph~A. Wolf, \emph{Essential self-adjointness for the {D}irac operator and
  its square}, Indiana Univ. Math. J. \textbf{22} (1972/73), 611--640.
  \MR{0311248}

\end{thebibliography}
\providecommand{\bysame}{\leavevmode\hbox to3em{\hrulefill}\thinspace}
\providecommand{\MR}{\relax\ifhmode\unskip\space\fi MR }
\providecommand{\MRhref}[2]{%
  \href{http://www.ams.org/mathscinet-getitem?mr=#1}{#2}
}
\providecommand{\href}[2]{#2}

\setlength{\parskip}{0mm}

\end{document}